\pgfplotsset{compat=1.18}
\newtheorem{theorem}{Theorem}[section]
\newtheorem{lemma}[theorem]{Lemma}
\newtheorem{corollary}[theorem]{Corollary}
\newtheorem{claim}[theorem]{Claim}
\newtheorem{question}[theorem]{Question}
\newtheorem{fact}[theorem]{Fact}
\theoremstyle{definition}
\newtheorem{definition}[theorem]{Definition}
\newtheorem{construction}[theorem]{Construction}
\theoremstyle{remark}
\newenvironment{poc}{\begin{proof}[Proof of claim]}{\end{proof}}
\newcolumntype{C}[1]{>{\centering\arraybackslash}m{#1}} 
\newcommand\restr[2]{{
  \left.\kern-\nulldelimiterspace 
  #1 
  \right|_{#2} 
  }}
\newcommand{\x}{\times}
\let\leq\le
\let\geq\ge
\let\leqslant\le
\let\geqslant\ge
\begin{document}
\title{The codegree Tur\'an density of tight cycles}
\author{Jie Ma \thanks{School of Mathematical Sciences, University of Science and Technology of China, Hefei, Anhui 230026, and Yau Mathematical Sciences Center, Tsinghua University, Beijing 100084, China.
Research supported by National Key Research and Development Program of China 2023YFA1010201 and National Natural Science Foundation of China grant 12125106. 
Email: \href{mailto:jiema@ustc.edu.cn}{jiema@ustc.edu.cn}}\and  Mingyuan Rong\thanks{School of Mathematical Sciences, University of Science and Technology of China, Hefei,
China.
Research supported by National Key Research and Development Program of China 2023YFA1010201, the NSFC under Grant No. 12125106 and the Excellent PhD Students Overseas Study Program of the University of Science and Technology of China. Email: \href{mailto:rong_ming_yuan@mail.ustc.edu.cn}{rong\_ming\_yuan@mail.ustc.edu.cn}}}
\date{}
\maketitle

\begin{abstract}
The codegree Tur\'an density $\gamma(F)$ of a $k$-uniform hypergraph $F$ is the minimum real number $\gamma \ge 0$ such that every $k$-uniform hypergraph on sufficiently many $n$ vertices, in which every set of $k-1$ vertices is contained in at least $(\gamma+o(1))n$ edges, contains a copy of $F$.
A recent result of Piga, Sanhueza-Matamala, and Schacht determines that $\gamma(C_{\ell}^3)=\frac13$ for every $3$-uniform tight cycle $C_\ell^3$ of length $\ell$, where $\ell \ge \ell_0$ and $\ell$ is not divisible by $3$.
In this paper, we investigate the codegree Tur\'an density of $k$-uniform tight cycles $C_\ell^k$.
We establish improved upper and lower bounds on $\gamma(C_{\ell}^k)$ for general $\ell$ not divisible by $k$.
These results yield the following consequences:
\begin{itemize}
    \item For any prime $k \ge 3$, we show that $\gamma(C_{\ell}^k)=\frac13$ for all sufficiently large $\ell$ not divisible by $k$, generalizing the above theorem of Piga et al.
    \item For all $k \ge 3$, we determine the exact value of $\gamma(C_{\ell}^k)$ for integers $\ell$ not divisible by $k$ in a set of (natural) density at least $\frac{\varphi(k)}{k}$, where $\varphi(\cdot)$ denotes Euler's totient function.  
    \item We give a complete answer to a question of Han, Lo, and Sanhueza-Matamala concerning the tightness of their construction for $\gamma(C_{\ell}^k)$.
\end{itemize}
Moreover, our results also determine the codegree Tur\'an density of $C_\ell^{k-}$, that is, the $k$-uniform tight cycle of length $\ell$ with one edge removed, for a new set of integers $\ell$ of positive density for every $k \ge 3$.
Our upper bound result is based on a structural characterization of $C_{\ell}^k$-free $k$-uniform hypergraphs with high minimum codegree, while the lower bounds are derived from a novel construction model, coupled with the arithmetic properties of the integers $k$ and $\ell$.
\end{abstract}

\section{Introduction}\label{sec 1}

 Let $G$ be a $k$-uniform hypergraph defined by a vertex set $V(G)$ and an edge set $E(G) \subseteq \{S\subseteq V(G): |S|=k\}$. For a fixed $k$-uniform hypergraph $F$, the \emph{Tur\'an number} $\text{ex}(n, F)$ denotes the maximum number of edges in a $k$-uniform hypergraph on $n$ vertices that does not contain a copy of $F$. Determining the value of $\text{ex}(n, F)$ and the \emph{Turán density} $\pi(F) = \lim_{n \to \infty} \text{ex}(n, F)/\binom{n}{k}$
is a central problem in extremal combinatorics. 
While $\pi(F)$ is fully determined for graphs (i.e., $k=2$) by the classic Erd\H{o}s--Stone--Simonovits theorem~\cite{ES1966,ES1946}, the problem is notoriously difficult for $k \ge 3$, where the answer remains unknown even for complete hypergraphs. For more information, we refer to the excellent survey by Keevash~\cite{K2011}.

In this paper, we study a variant of the hypergraph Turán density known as the codegree Turán density, introduced by Mubayi and Zhao~\cite{MZ2007}. For a $k$-uniform hypergraph $G$ and a $(k-1)$-subset $S \subseteq V(G)$, let $N_G(S)$ denote the set $\{v \in V(G) : S \cup \{v\} \in E(G)\}$. We define the \emph{codegree} of $S$ as $|N_G(S)|$, and the \emph{minimum codegree} of $G$, denoted by $\delta_{k-1}(G)$, as the minimum codegree over all $(k-1)$-subsets of $V(G)$. For a $k$-uniform hypergraph $F$, the \emph{codegree Turán number} $\text{ex}_{k-1}(n, F)$ is the largest minimum codegree possible in a hypergraph on $n$ vertices that does not contain a copy of $F$. The \emph{codegree Turán density} of $F$ is defined as:
$$\gamma(F) = \lim_{n \to \infty} \frac{\text{ex}_{k-1}(n, F)}{n}.$$
This limit always exists (see \cite{MZ2007}), and it is clear that $\gamma(F) \le \pi(F)$ holds for any $F$.

Exact values of the codegree Tur\'an density are known for very few hypergraphs.
An early result by Mubayi~\cite{M2005} determined that the codegree Tur\'an density of the Fano plane equals $1/2$, and this was extended to several classes of projective geometries in~\cite{KZ2007,ZG2021}.
A key problem in this study, conjectured by Czygrinow and Nagle~\cite{CN2001}, is whether $\gamma(K_4^3)=1/2$, where $K_4^3$ denotes the complete $3$-graph on $4$ vertices; this remains open.
In contrast, for the $3$-graph $K_4^{3-}$ on $4$ vertices with $3$ edges, Falgas-Ravry, Pikhurko, Vaughan, and Volec~\cite{FPVV2023} established that $\gamma(K_4^{3-}) = 1/4$ using computer-assisted proofs.
Similar methods were applied earlier to show that $\gamma(F_{3,2})=1/3$ in \cite{FMPV2015}, where $F_{3,2}$ denotes the 3-graph $([5],\{123, 124, 125, 345\})$.
Most recently, Ding, Lamaison, Liu, Wang, and Yang~\cite{DLLWY2025} established vanishing codegree density results for a class of layered $3$-graphs.

For integers $k \geq 3$ and $\ell \geq k+1$, a $k$-uniform \emph{tight cycle} $C^k_\ell$ (of length $\ell$) consists of vertices $v_1, \dots, v_\ell$ arranged cyclically, with edges $\{v_i, v_{i+1}, \dots, v_{i+k-1}\}$ for all $i \in [\ell]$, where indices are taken modulo $\ell$. 
Tight cycles play a fundamental role in extremal hypergraph theory, and their density problems has attracted significant attention in recent years. 
A classical result of Erd\H{o}s~\cite{E1964} implies that $\gamma(C^k_\ell)=\pi(C^k_\ell)= 0$ whenever $k \mid \ell$.
For $3$-uniform tight cycles $C^3_\ell$ with $3 \nmid \ell$ and $\ell \ge 7$, the Turán density $\pi(C^3_\ell)$ was determined by Kamčev, Letzter, and Pokrovskiy~\cite{KLP2024} and by Bodnár, León, Liu, and Pikhurko~\cite{BLLP2025}.
Recently, Sankar~\cite{S2024} determined the Tur\'an density for $4$-uniform tight cycles of sufficiently large length.

Turning to codegree Tur\'an densities, 
Han, Lo, and Sanhueza-Matamala~\cite{HLS2021}
showed that $\gamma(C_\ell^k)\leq \frac12$ for any $\ell\geq k+1$, and moreover established the following general lower bound:\footnote{We refer to Subsection~\ref{subsec:1/p} for detailed discussion.} 
\begin{equation}\label{equ:>=1/p}
\gamma(C_\ell^k)\geq \frac{1}{p},
\end{equation}
where $p$ denotes the smallest prime factor of $k/\gcd(k,\ell)$.
They also proved that this lower bound is optimal in the case $p=2$,\footnote{The condition in~\cite{HLS2021} is stated in terms of ``admissible pairs'', which is equivalent to $p=2$ here.} by showing that 
\begin{equation}\label{equ:gamma=1/2}
\gamma(C^k_\ell)=\frac{1}{2}
\mbox{~ for all integers } k\geq 3 \mbox{ and } \ell\geq 2k^2 \mbox{ satisfying } 2\mid \frac{k}{\gcd(k,\ell)}. 
\end{equation}
Motivated by this result, Han, Lo, and Sanhueza-Matamala further asked whether the lower bound \eqref{equ:>=1/p} is tight when $p\geq 3$ and $\gcd(k,\ell)\neq 1$ (see Question 10.2 in \cite{HLS2021}).

The 3-uniform case is particularly interesting.
Note that $C^3_4$ coincides with $K^3_4$.
A result of Balogh, Clemen, and Lidický~\cite{BCL2022} implies $\gamma(C^3_\ell)\leq 0.3993$ for every $\ell\geq 5$, with the exception of $\ell=7$.
Very recently, in a beautiful short paper~\cite{PSS2024}, Piga, Sanhueza-Matamala, and Schacht proved
\[
\gamma(C^3_\ell)=\frac13  \mbox{ ~ for } \ell\in \{10,13,16\} \mbox{ and for every } \ell\geq 19 \mbox{ not divisible by 3}.
\]
See also the follow-up work~\cite{M2024}.
The only remaining open cases are $\gamma(C^3_\ell)$ for $\ell\in \{4,5,7,8\}.$

The main objective of this paper is to investigate the codegree Turán density of $k$-uniform tight cycles $C_\ell^k$. We establish improved upper and lower bounds on $\gamma(C_\ell^k)$ for general $\ell$ not divisible by $k$, which coincide for infinitely many integers $\ell$ for every fixed $k$. In particular, when $k$ is prime or $k\leq 19$, we determine $\gamma(C_\ell^k)$ for all sufficiently large $\ell$.

\subsection{Main results}\label{subsec:Main-results}

Our first main result provides a general upper bound for all cases not determined by~\eqref{equ:gamma=1/2}.

\begin{theorem}
\label{thm:main_upper_bound}
Let $3\leq k< \ell$ be integers with $k \nmid \ell$ and $\ell \ge 20k^2$. If $2\nmid \frac{k}{\gcd(k,\ell)}$, then
$$
\gamma(C^k_\ell) \le \frac{1}{3}.
$$
\end{theorem}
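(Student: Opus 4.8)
The plan is to follow the strategy that works for $k=3$ in~\cite{PSS2024}, suitably adapted to general $k$: argue by contradiction from a $C_\ell^k$-free $k$-uniform hypergraph $G$ on $n$ vertices with $\delta_{k-1}(G)\ge (1/3+\varepsilon)n$, extract a robust ``cyclic-type'' structure forced by the high codegree, and then show that such structure necessarily produces a tight cycle of length exactly $\ell$ whenever $k\nmid\ell$ and $2\nmid k/\gcd(k,\ell)$. The first step is a standard cleaning/regularization: pass to a large subset on which every $(k-1)$-set has codegree at least $(1/3+\varepsilon/2)|V|$ and, using a supersaturation / dependent-random-choice type argument, find a short tight path $P$ of length linear in $k$ whose internal ``link'' behaves generically. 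The point of $\ell\ge 20k^2$ is exactly to leave enough room to first build a tight path of controlled length and then close it up with a flexible connection of the right residue.

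\medskip

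The core of the argument is a \emph{connectivity/absorption lemma for tight paths}: in a $k$-graph with codegree $>\tfrac13 n$, any two ``compatible'' ordered $(k-1)$-tuples can be joined by a tight path whose length can be prescribed modulo some fixed period $d$, and one shows $d\mid k$ via the arithmetic of the link structure. Concretely, starting from a $(k-1)$-tuple $(x_1,\dots,x_{k-1})$, the codegree bound gives $\ge(\tfrac13+\varepsilon)n$ choices of $x_k$; iterating and using inclusion--exclusion over the three ``thirds'' forces, after $k$ steps, a returning structure — this is where $\tfrac13$ (rather than $\tfrac12$) is the critical threshold, because with density just above $1/3$ three pairwise-intersecting neighbourhoods must have a common element, which is precisely what lets us turn a tight walk into a genuine (vertex-distinct) tight cycle. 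The hypothesis $2\nmid k/\gcd(k,\ell)$ enters when we translate ``a tight closed walk of the right length exists'' into ``a tight cycle $C_\ell^k$ exists'': the obstruction to realizing length $\ell$ is governed by the subgroup of $\mathbb Z$ generated by the achievable step-lengths, and the parity condition guarantees that $\ell$ lies in the achievable residue class (when $2\mid k/\gcd(k,\ell)$ one only gets the weaker $\gamma=1/2$ as in~\eqref{equ:gamma=1/2}, so the condition is genuinely needed).

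\medskip

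In more detail, the steps I would carry out are: (i) \textbf{Setup and cleaning} — fix $\varepsilon>0$, assume $\delta_{k-1}(G)\ge(\tfrac13+\varepsilon)n$ and $G$ is $C_\ell^k$-free, and reduce to a subgraph with good codegree and a regular link structure. (ii) \textbf{Path-building} — show there is a tight path of length $\Theta(k^2)$; here $\ell\ge 20k^2$ gives slack. (iii) \textbf{The $\tfrac13$-threshold lemma} — prove that three neighbourhood sets $N_G(S_1),N_G(S_2),N_G(S_3)$, each of size $>\tfrac13 n$, arising along a tight walk must share a vertex, and use this to show tight closed walks of length $L$ can be shortcut/extended to tight \emph{cycles} of length $L' $ for $L'$ in a controlled range and residue class mod $d$, with $d\mid\gcd(k,\text{something})$. (iv) \textbf{Arithmetic} — using $k\nmid\ell$ and $2\nmid k/\gcd(k,\ell)$, verify $\ell$ is an achievable length, producing $C_\ell^k\subseteq G$, a contradiction. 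The main obstacle I anticipate is step (iii): for $k=3$ the ``three sets of density $>1/3$ intersect'' phenomenon is clean, but for general $k$ one must control how a tight walk of length not divisible by $k$ can be closed into a cycle without repeating vertices, and the period $d$ of achievable lengths must be pinned down precisely in terms of $\gcd(k,\ell)$ and the parity hypothesis — this is where the bulk of the technical work (and the need for $\ell\ge 20k^2$ rather than a smaller bound) will lie.
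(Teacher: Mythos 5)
Your proposal correctly identifies the surface-level $\tfrac13$-threshold phenomenon: with codegree $> n/3$, any three link neighbourhoods pairwise intersect by pigeonhole, and this fact is indeed what the paper exploits locally (it forces the ``non-exchangeable'' relation inside each edge $e$ to have no independent set of size $3$ in its complement). You also correctly anticipate that $\ell \geq 20k^2$ is slack for walk-length bookkeeping, and that the parity hypothesis $2 \nmid k/\gcd(k,\ell)$ enters only at the final arithmetic step. However, there is a genuine gap: your proposal never arrives at, or even gestures toward, the actual structural mechanism that makes everything work, and your proposed substitute for it does not hold together.

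The paper's engine is Theorem~\ref{thm:structural}: if $G$ is $C_\ell^k$-free (in the homomorphic sense) with $\delta_{k-1}(G) > n/3$, then there is a single vertex set $B$ with $n/3 \leq |B| \leq 2n/3$ such that \emph{every} edge meets both $B$ and $B^c$ in an odd number of vertices. This is derived by showing (via the triple-intersection pigeonhole plus carefully designed walks of length $O(k^2)$) that the non-exchangeability graph $H_e$ of every edge is complete bipartite, that this bipartition is preserved across adjacent edges, that it is induced by a global ``slice'' $\mathrm{Slice}(B,i)$, and finally that all tight components are slices of the same $B$ with $i$ odd. Once this is in hand, the proof of Theorem~\ref{thm:main_upper_bound} is a short finishing move: when $s = k/\gcd(k,\ell)$ is odd, the codegree condition forces an edge $e$ with $|e \cap B| = s$; vertices on the same side of $B$ inside $e$ are pairwise exchangeable; the cyclic shift $(1\,2\,\cdots\,k)^{k-t}$ factors into orbits each lying entirely in $e\cap B$ or entirely in $e\cap B^c$; and the resulting exchangeable transpositions give a walk closing into a homomorphic $C_\ell^k$ by Lemma~\ref{lemma:cycle_existence}, a contradiction.

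Your step (iii) tries to replace this with an ``absorption lemma'' asserting that achievable tight-walk lengths form a residue class mod some period $d$ dividing $k$. This is not the right invariant and there is no clear route to proving such a lemma from $\delta_{k-1} > n/3$ alone: the obstruction to closing a walk is not a subgroup of walk lengths but the realizability of a specific cyclic permutation as a product of exchangeable transpositions, and that realizability hinges on the global bipartition $B$ which your outline never produces. You flag step (iii) as the main difficulty but leave it entirely unresolved; in the paper it is the whole content (Section~\ref{sec:proof_of_structural}). Two further, smaller, mismatches: the paper does not pass to vertex-distinct cycles at all, it uses supersaturation to reduce to \emph{homomorphic} copies (so your worry about ``genuine vertex-distinct tight cycles'' is sidestepped, not solved); and there is no cleaning/regularization/dependent-random-choice step — the argument is structural and exact, working directly with $\delta_{k-1}(G) > n/3$.
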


Recall that $p$ denotes the smallest prime factor of $\frac{k}{\gcd(k,\ell)}$.
We observe that the condition $2\nmid \frac{k}{\gcd(k,\ell)}$ is equivalent to $p\ge 3$.
In the case $p=3$, this upper bound matches the lower bound~\eqref{equ:>=1/p} from~\cite{HLS2021}, and thus
extends the results of~\cite{PSS2024, M2024} for $3$-uniform tight cycles to higher uniformities.

The proof of Theorem~\ref{thm:main_upper_bound} relies on the following structural characterization of $C_\ell^k$-free hypergraphs with large minimum codegree, which may be of independent interest.
Note that this characterization immediately implies the conclusion of Theorem~\ref{thm:main_upper_bound} for all odd integers $k$.

\begin{theorem}
\label{thm:structural}
Let $3\le k< \ell$ be integers with $k \nmid \ell$ and $\ell \geqslant 20k^2$. 
Suppose that $G$ is a $k$-uniform hypergraph that contains no homomorphic copy of $C^k_\ell$ and satisfies $\delta_{k-1}(G) > \frac{1}{3} |V (G)|$.
Then there exists a vertex set $B \subseteq V (G)$ with $\frac{|V (G)|}{3} \leqslant |B| \leqslant \frac{2|V (G)|}{3}$ such that for every $e \in E(G)$, both $|e \cap B|$ and $|e\cap (V(G)\setminus B)|$ are odd. 
In particular, $k$ has to be even.
\end{theorem}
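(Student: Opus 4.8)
The plan is to analyze the "link structure" forced by the high codegree condition and the absence of a homomorphic copy of $C_\ell^k$. Let $n = |V(G)|$ and suppose $\delta_{k-1}(G) > n/3$. The key object to study is, for each $(k-1)$-set $S$, the neighborhood $N_G(S)$, which has size more than $n/3$; the homomorphism-freeness must severely constrain how these neighborhoods overlap as $S$ varies. I would first set up an auxiliary structure encoding "walks": a tight walk of length $\ell$ closing up gives a homomorphic copy of $C_\ell^k$, so $G$ contains no closed tight walk of length $\ell$. Reformulating, consider the directed "transition" digraph $\mathcal{D}$ on ordered $(k-1)$-tuples of vertices, where $(v_1,\dots,v_{k-1}) \to (v_2,\dots,v_k)$ whenever $\{v_1,\dots,v_k\} \in E(G)$. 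A homomorphic $C_\ell^k$ corresponds to a closed directed walk of length $\ell$ in $\mathcal{D}$. So the hypothesis says $\mathcal{D}$ has no closed walk of length exactly $\ell$.

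The heart of the argument is a number-theoretic/structural dichotomy: in a strongly connected component of $\mathcal{D}$ with enough "richness" (guaranteed by codegree $> n/3$, which forces each tuple to have large out-degree and in-degree), the set of closed-walk lengths through any vertex is eventually a union of residue classes modulo some integer $d$ (the gcd of cycle lengths), and once $d$ is small relative to $\ell$ one can realize length $\ell$ unless $\ell$ avoids the relevant residues. The condition $2 \nmid \frac{k}{\gcd(k,\ell)}$, i.e. $p \ge 3$, is exactly what allows $\ell$ to avoid these residues only in a rigid way; conversely, the parity obstruction is the one surviving structure. Concretely, I expect to show that the only way to avoid a closed tight walk of length $\ell$ (with $\ell \ge 20k^2$, so $\ell$ is large enough to absorb slack) is that $\mathcal{D}$ — and hence $G$ — admits a $2$-coloring: there is a partition $V(G) = B \cup (V(G) \setminus B)$ such that every edge meets $B$ in an odd number of vertices (equivalently, walking across one edge flips a $\mathbb{Z}_2$-valued potential by $k \bmod 2$, and closed tight walks of length $\ell$ would then pick up $\ell \cdot (\text{something})$, forcing a contradiction unless $k$ is even and the parity pattern is as claimed). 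The size bounds $\frac{n}{3} \le |B| \le \frac{2n}{3}$ should then follow from the codegree condition: if $|B|$ were too small, a $(k-1)$-set entirely outside $B$ (or with controlled intersection) would be forced to have a neighbor producing an edge with even intersection with $B$, contradicting the structure; one plays the counting of $(k-1)$-sets against $\delta_{k-1}(G) > n/3$ to pin $|B|$ into the stated window. Finally, $k$ even is immediate: if $k$ were odd, an edge with $|e \cap B|$ odd would force $|e \cap (V \setminus B)| = k - |e\cap B|$ even, contradicting the conclusion, so no such $B$ can exist — meaning the only consistent outcome requires $k$ even.

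The main obstacle I anticipate is establishing the dichotomy rigorously: proving that high codegree ($>n/3$, not $>n/2$) together with $\ell \ge 20k^2$ forces the clean $2$-coloring, rather than some messier modular structure with modulus $d \ge 3$. One needs to rule out, for instance, a partition into three parts cycling under the tight-walk transition, which a priori could also avoid length-$\ell$ closed walks when $3 \mid$ (relevant quantity). Here the threshold $1/3$ is sharp and delicate: the extremal construction realizing $\gamma(C_\ell^k) \ge 1/3$ presumably has exactly such a ternary structure, so the argument must show that *strictly* exceeding $n/3$ collapses all $d \ge 3$ possibilities down to $d = 2$. I would handle this by a careful stability/absorption argument — showing that if the modulus were $\ge 3$, the surplus codegree lets one reroute a tight walk to adjust its length by $\pm 1$ (or by any small amount coprime to the modulus), and then iterating to hit $\ell$ exactly, using $\ell \ge 20k^2$ to guarantee enough room for the rerouting steps. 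The bookkeeping of how a single rerouting changes the walk length modulo $d$, and ensuring the rerouted walk stays a valid tight walk, is where the $20k^2$ bound and the prime factor $p$ of $\frac{k}{\gcd(k,\ell)}$ enter quantitatively.
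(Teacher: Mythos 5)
Your high-level picture is right, and you correctly identify the crux: one must rule out that $G$ could survive with a ``ternary'' (or higher-modulus) structure rather than the parity structure claimed, given only $\delta_{k-1}>n/3$ and $\ell\geq 20k^2$. But the proposal stops exactly at that crux. The ``stability/absorption'' and ``rerouting to adjust walk length by $\pm 1$'' step is where the entire content of the theorem lives, and you acknowledge you do not have it. This is a genuine gap, not a deferral of routine bookkeeping.

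The paper's mechanism, which your proposal does not contain, is the notion of \emph{exchangeability} and the complete-bipartiteness of the auxiliary graph $H_e$. For an edge $e$, call $u,v\in e$ exchangeable if some $w\notin e$ makes both $(e\setminus\{u\})\cup\{w\}$ and $(e\setminus\{v\})\cup\{w\}$ edges. The codegree condition $>n/3$ gives, by pigeonhole on any three vertices of $e$, that the graph of non-exchangeable pairs has no independent set of size 3. If the exchangeability graph were connected, one could realize any permutation of $e$ by a short tight walk (Lemma~\ref{lemma:forming_good_walk}), in particular a cyclic shift, and close it to length $\ell$ using $\ell\geq 20k^2$; contradiction. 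So exchangeability is disconnected with no independent set of size 3 in its complement, forcing $H_e$ to be complete bipartite. This is the concrete dichotomy you were hoping to get from an abstract ``modulus $d$'' argument, and it comes not from counting closed-walk lengths in the transition digraph but from a local pigeonhole plus a walk-construction lemma. The rest of the paper's proof (adjacent edges inherit the bipartition, the bipartition is induced by a global ``slice'', slices agree across tight components, parity of $|e\cap B|$ forced by a recursion on traces) propagates this local structure, but it is the exchangeability step that does the work your proposal leaves undone.

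Two smaller points. First, your $\mathbb{Z}_2$-potential heuristic (``walking across one edge flips a potential by $k\bmod 2$'') does not match the actual structure: a tight step replaces one vertex by another, so $|e\cap B|$ changes by $-1,0,$ or $+1$, not by $k\bmod 2$; the paper instead proves $|e\cap B|$ is \emph{constant} on each tight component (this is Step 3), which is a stronger and differently flavored statement than a parity invariant. Second, your argument that $k$ must be even is fine — if $k$ odd, no $B$ with both $|e\cap B|$ and $|e\cap B^c|$ odd can exist, so the hypotheses cannot hold — but it is a corollary of the rest, not a separate fact, and it offers no help with the missing middle. As submitted, this is a plausible attack plan with the central lemma (exchangeability and the complete bipartite $H_e$) absent, so it does not constitute a proof.
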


Before proceeding, we consider the case \(p>3\).
Unlike the case \(p=3\), the lower bound \(1/p\) in \eqref{equ:>=1/p} is strictly smaller than our upper bound \(1/3\).

Our second main result provides a refined general lower bound, yielding the matching value \(1/3\) for infinitely many pairs \((k,\ell)\) under certain arithmetic conditions, even for \(p>3\).
This bound is obtained via a novel construction, the \((k,\ell;d)\)-family; see Definition~\ref{def:kld_family} for details.

\begin{theorem}\label{thm:new_lower_bound_revised}
Let $3\le k< \ell$ be integers with $k \nmid \ell$.  Let $p$ be the smallest prime factor of $\frac{k}{\gcd(k,\ell)}$, and let $t$ be the smallest odd integer satisfying $t \ge \max(3, \gcd(k,\ell))$. Then
\[
    \gamma(C^k_\ell) \ge \max\left(\frac{1}{p}, \frac{1}{t}\right).
\]
\end{theorem}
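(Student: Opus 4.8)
The plan is to establish the two lower bounds $\gamma(C^k_\ell)\ge\tfrac1p$ and $\gamma(C^k_\ell)\ge\tfrac1t$ separately and take their maximum.

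For $\gamma(C^k_\ell)\ge\tfrac1p$ I would simply invoke inequality~\eqref{equ:>=1/p} of~\cite{HLS2021}; for completeness, and to motivate the second bound, one can recall its proof. The extremal construction has vertex set $\mathbb{Z}_p\times[n/p]$, assigns the label $a\in\mathbb{Z}_p$ to every vertex $(a,\cdot)$, and makes a $k$-set an edge iff its labels sum to a fixed $c\neq0$ in $\mathbb{Z}_p$; then every $(k-1)$-set has a unique completing class, so $\delta_{k-1}=(\tfrac1p-o(1))n$, and any homomorphism of $C^k_\ell$ with cyclic label sequence $b_0,\dots,b_{\ell-1}$ would, on comparing the windows at positions $i$ and $i+1$, satisfy $b_{i+k}=b_i$, hence be periodic with period $\gcd(k,\ell)$; then each window-sum equals $\tfrac{k}{\gcd(k,\ell)}$ times a period-sum, which lies in $\tfrac{k}{\gcd(k,\ell)}\mathbb{Z}_p=\{0\}$ since $p\mid\tfrac{k}{\gcd(k,\ell)}$ — contradicting $c\neq0$.

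For $\gamma(C^k_\ell)\ge\tfrac1t$ I would construct, for all large $n$, a $k$-graph $G_n$ on $n$ vertices with $\delta_{k-1}(G_n)\ge(\tfrac1t-o(1))n$ and no homomorphic copy of $C^k_\ell$, taking $G_n$ from the $(k,\ell;d)$-family of Definition~\ref{def:kld_family} with the parameter $d$ chosen in terms of $k$ and $\ell$ so that its label space has size exactly $t$ — it is here that the hypotheses $t$ odd and $t\ge\max(3,\gcd(k,\ell))$ enter, making this choice admissible. The codegree bound is then a routine verification: $V(G_n)$ splits into $t$ classes of size $(\tfrac1t\pm o(1))n$ and the edge rule is arranged so that every $(k-1)$-set is completed to an edge by at least one class. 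The crux is that $G_n$ has no homomorphic copy of $C^k_\ell$; working with homomorphic copies automatically discards degenerate embeddings that collapse many cycle-vertices onto few hypergraph-vertices. Assuming $\phi\colon C^k_\ell\to G_n$ is such a homomorphism and $b_0,\dots,b_{\ell-1}$ its cyclic sequence of class-labels, the $\ell$ window constraints translate through the edge rule into a system on $(b_i)_{i\in\mathbb{Z}_\ell}$; comparing consecutive windows forces $b_{i+k}=b_i$, so $b$ has period $g:=\gcd(k,\ell)$, and the surviving constraint becomes a single relation on $b_0,\dots,b_{g-1}$ in which the multiplicity $k/g$ and the residue of $\ell$ appear. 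The reason for taking $t$ odd and $\ge\max(3,g)$ is exactly to make this relation unsatisfiable: $t\ge g$ leaves enough room among $b_0,\dots,b_{g-1}$ for the relevant invariant to be nonzero, while the oddness of $t$ forbids the cancellation that would otherwise satisfy it — the analogue of ``$k/\gcd(k,\ell)$ even'' blocking the $\mathbb{Z}_2$-construction in the case $p=2$.

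The hard part is this last step: showing the $(k,\ell;d)$-family is genuinely $C^k_\ell$-free. Because its edge rule is not linear, the reduction to a periodic label sequence and the ensuing contradiction need care — one must follow precisely how $\gcd(k,\ell)$, the residue of $\ell$ modulo $k$, and the parity of $t$ combine in the surviving relation, handle separately the regime $g\le2$ (where $t=3$) and $g\ge3$ (where $t=g$ or $g+1$), and check that the chosen $d$ and the size-$t$ label space are at once large enough for the codegree estimate and rigid enough to forbid every closed walk of length $\ell$ in the associated transition digraph. Once this is in place, combining with $\gamma(C^k_\ell)\ge\tfrac1p$ yields the stated bound.
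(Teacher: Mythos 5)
Your reduction of $\gamma(C^k_\ell)\ge\tfrac1p$ to the HLS construction is correct and matches the paper. The gap is in the $\tfrac1t$ bound, and it is substantial.

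First, you presuppose that a suitable $(k,\ell;t)$-family exists (``taking $G_n$ from the $(k,\ell;d)$-family of Definition~\ref{def:kld_family}''), but Definition~\ref{def:kld_family} only axiomatizes such families; \emph{constructing} one when $t$ is odd and $t\ge\max(3,\gcd(k,\ell))$ is the technical heart of the theorem (\cref{thm:existence}(2)), carried out via the ``local replacement'' method of \cref{subsec:local_replacement}: one starts from the modular base family $\mathcal{B}_t^k$, deletes the ``problematic'' types all of whose coordinates are divisible by $k/\gcd(k,\ell)$, and patches in replacement types, then verifies (P1) and (P2) through Claims~\ref{claim:zero_component}--\ref{claim:cycle_prevention_restored}. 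Your proposal elides this entirely.

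Second, and more fundamentally, the cycle-freeness argument you sketch does not work for these families. You claim ``comparing consecutive windows forces $b_{i+k}=b_i$,'' reducing to a $\gcd(k,\ell)$-periodic label sequence. That deduction is specific to the \emph{linear} modular-sum edge rule of $H_{n,p}^k$: there one subtracts consecutive congruences $\sum_{j}x_{i+j}\equiv 1\pmod p$ to kill all but two terms. And a naive linear rule mod $t$ cannot suffice here, since $t$ need not divide $k/\gcd(k,\ell)$, so the final contradiction (that $k/\gcd(k,\ell)$ times a period-sum vanishes mod the modulus) simply fails. The $(k,\ell;t)$-family obtained by local replacement is \emph{not} a linear rule, so there is no window-differencing to perform and no reason for $b_{i+k}=b_i$. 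The paper's actual cycle-freeness proof (\cref{thm:lower_bound_part1}) avoids periodicity entirely: the tight structure forces all $\ell$ edge-types to lie in a single connected component $C$ of the type graph; property (P2) supplies an index set $I_C$ and an invariant $v_C$ with $\frac{k}{\gcd(k,\ell)}\nmid v_C$ so that the indicator sequence $y_j=\mathbf{1}[v_j\in\bigcup_{m\in I_C}V_m]$ has constant window sums $\sum_{j=i}^{i+k-1}y_j=v_C$; summing these over all $i\in[\ell]$ gives $\ell v_C=k\sum_j y_j$ and hence $\frac{k}{\gcd(k,\ell)}\mid v_C$, a contradiction. This connected-component-plus-invariant argument is the missing idea, and it is what replaces periodicity once the edge rule is nonlinear. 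Relatedly, your explanation of the role of oddness (``forbids the cancellation'' in the surviving periodic relation) is off: in the paper, $t$ odd is used arithmetically inside the construction itself — in \cref{claim:zero_component} to guarantee every problematic type has a zero coordinate (via $\frac{d(d+1)}{2}q\equiv 0\pmod d$), and in \cref{claim:cycle_prevention_restored} to build the alternating index set $I_C$ — not to block cancellation in a label-sequence identity.
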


The combination of Theorem~\ref{thm:main_upper_bound} and Theorem~\ref{thm:new_lower_bound_revised} yields exact results for a large class of pairs $(k,\ell)$. 
In particular, when $\ell\geq 20k^2$ and $\gcd(k, \ell)=1$, upper and lower bounds coincide as follows:
\begin{equation}\label{equ:gcd=1}
\gamma(C^k_\ell) =
\begin{cases}
1/2 & \text{if } k \text{ is even,} \\
1/3 & \text{if } k \text{ is odd.}
\end{cases}
\end{equation}
This leads to the following immediate corollary. 
Let \(\varphi(\cdot)\) denote Euler’s totient function.

\begin{corollary}\label{cor:prime_and_density}
\begin{itemize}
\item[(a).] For any prime $k\geq 3$, $\gamma(C^k_\ell) = \frac{1}{3}$ holds for all $\ell \ge 20k^2$ with $k \nmid \ell$.
\item[(b).] For any integer \(k\) of the form \(2^\alpha 3^\beta\), \(2q\), or \(3q\) with \(q\) prime, the exact value of \(\gamma(C_\ell^k)\) is determined for all \(\ell \ge 20k^2\) with \(k\nmid \ell\).
\item[(c).] For any fixed $k$, the exact value of $\gamma(C_{\ell}^k)$ is determined for $\ell$ in a set of natural density $\frac{\varphi(k)}{k}$.
\end{itemize}
\end{corollary}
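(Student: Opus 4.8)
The plan is to obtain all three parts directly from Theorems~\ref{thm:main_upper_bound} and~\ref{thm:new_lower_bound_revised} together with the known value~\eqref{equ:gamma=1/2}, by bookkeeping on $d := \gcd(k,\ell)$ and the least prime factor $p$ of $k/d$. The first step is to record the following clean trichotomy, valid for all $3 \le k < \ell$ with $\ell \ge 20k^2$ and $k \nmid \ell$ (so $1 \le d < k$): (i) if $p = 2$, i.e.\ $2 \mid k/d$, then~\eqref{equ:gamma=1/2} applies since $\ell \ge 20k^2 \ge 2k^2$, giving $\gamma(C^k_\ell) = \tfrac12$; (ii) if $p = 3$, then $2 \nmid k/d$, so Theorem~\ref{thm:main_upper_bound} gives $\gamma(C^k_\ell) \le \tfrac13$, while Theorem~\ref{thm:new_lower_bound_revised} gives $\gamma(C^k_\ell) \ge \max(\tfrac1p,\tfrac1t) \ge \tfrac1p = \tfrac13$, hence $\gamma(C^k_\ell) = \tfrac13$; (iii) if $p \ge 5$ but $d \le 3$, then again $2 \nmid k/d$, so Theorem~\ref{thm:main_upper_bound} yields the upper bound $\tfrac13$, and since the smallest odd integer $t$ with $t \ge \max(3,d)$ equals $3$, Theorem~\ref{thm:new_lower_bound_revised} gives $\gamma(C^k_\ell) \ge \tfrac1t = \tfrac13$, so $\gamma(C^k_\ell) = \tfrac13$. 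Note that $\gcd(k,\ell) = 1$ always lands in case (ii) or (iii), recovering~\eqref{equ:gcd=1}.

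Part~(a) is then immediate: when $k \ge 3$ is prime and $k \nmid \ell$, necessarily $d = 1$ and $k/d = k$ is an odd prime, so we are in case (ii) (if $k=3$) or (iii) (if $k \ge 5$), giving $\gamma(C^k_\ell) = \tfrac13$ for all $\ell \ge 20k^2$. For part~(b), I would run the trichotomy over the list of proper divisors $d$ of $k$ for each admissible shape. If $k = 2^\alpha 3^\beta$, then $k/d$ is a nontrivial $\{2,3\}$-number, so $p \in \{2,3\}$ and the value is always determined. If $k = 2q$ with $q$ an odd prime $\ge 5$, then $d \in \{1,2,q\}$: for $d \in \{1,q\}$ we have $p = 2$, and for $d = 2$ we have $p = q \ge 5$ with $d = 2 \le 3$, so every case is covered. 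If $k = 3q$ with $q$ an odd prime $\ge 5$, then $d \in \{1,3,q\}$: for $d \in \{1,q\}$ we have $p = 3$, and for $d = 3$ we have $p = q \ge 5$ with $d = 3 \le 3$; again all covered. The remaining instances $k \in \{4,6,9\}$ of the listed shapes are themselves of the form $2^\alpha 3^\beta$.

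For part~(c), take $S := \{\ell \in \NN : \ell \ge 20k^2,\ \gcd(k,\ell) = 1\}$; every $\ell \in S$ satisfies $k \nmid \ell$, and by~\eqref{equ:gcd=1} (equivalently, by cases (ii)/(iii) above) its codegree Tur\'an density is determined. Since the indicator function of $\{\ell : \gcd(k,\ell) = 1\}$ is periodic modulo $k$ with exactly $\varphi(k)$ residue classes per period, this set has natural density $\varphi(k)/k$, and discarding the finitely many $\ell < 20k^2$ changes nothing; hence $S$ has natural density $\varphi(k)/k$, proving (c). There is no substantial obstacle here, as this is a bookkeeping corollary; the only point that needs a moment's care is that, for the shapes in (b), whenever the least prime $p$ of $k/d$ is at least $5$ one automatically has $d \le 3$, so that Theorem~\ref{thm:new_lower_bound_revised} still supplies the matching lower bound $\tfrac13$. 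This is clear from the explicit divisor lists above.
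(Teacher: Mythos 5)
Your proposal is correct and takes essentially the same approach as the paper: both reduce everything to the combination of Theorems~\ref{thm:main_upper_bound} and~\ref{thm:new_lower_bound_revised} together with~\eqref{equ:gamma=1/2}, organized by the value of $p$ and $\gcd(k,\ell)$. Your trichotomy $(i)$--$(iii)$ is just a slightly more explicit version of the paper's own casework (the paper packages the $p=2$ and odd-$k$ coprime cases into~\eqref{equ:gcd=1} and then observes that for the listed shapes of $k$ one always has either $p\in\{2,3\}$ or $p\ge 5$ with $\gcd(k,\ell)\le 3$), and your divisor-by-divisor bookkeeping in part~(b) and density computation in part~(c) match the paper's argument.
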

\begin{proof}
Throughout, we assume that \(\ell \ge 20k^2\) and \(k \nmid \ell\).
First, suppose that \(k \ge 3\) is prime. Then \(\gcd(k,\ell)=1\) and \(k\) is odd, so by \eqref{equ:gcd=1} we have \(\gamma(C_\ell^k)=1/3\).
Next, let \(k=2^\alpha 3^\beta\), \(2q\), or \(3q\), 
where we may assume \(q\) is a prime with \(q \ge 5\).
For any such \(\ell\), either \(p\in\{2,3\}\), or \(p=q\ge 5\) with \(2 \le \gcd(k,\ell) \le 3\).
In either case, by \eqref{equ:gamma=1/2} and Theorems~\ref{thm:main_upper_bound} and \ref{thm:new_lower_bound_revised}, we have
\(\gamma(C_\ell^k)\in\{\tfrac12,\tfrac13\}\).
Lastly, for any integer \(k \ge 3\), the equation \eqref{equ:gcd=1} determines the exact value of \(\gamma(C_\ell^k)\) for all \(\ell \ge 20k^2\) with \(\gcd(k,\ell)=1\).
The set of such integers \(\ell\) has natural density \(\varphi(k)/k\).
\end{proof}

In particular, it can be deduced from items (a) and (b) of Corollary~\ref{cor:prime_and_density} that for any $3\leq k\leq 19$, the exact value of $\gamma(C_{\ell}^k)$ is determined for all sufficiently large $\ell$.
The first open case is when $k=20$.

Finally, we obtain a complete answer to the aforementioned question of Han et al.~\cite{HLS2021} on whether the lower bound $\gamma(C_\ell^k)\geq \frac{1}{p}$ is tight for \(p\ge 3\) and \(\gcd(k,\ell)\neq 1\). It is worth noting that the answer is unchanged whether or not the assumption \(\gcd(k,\ell)\neq 1\) is imposed.

\begin{corollary}\label{cor:tightness_rephrased}
Let $3\le k< \ell$ be integers with $k \nmid \ell$ and $\ell \ge 20k^2$. Let $p$ be the smallest prime factor of $\frac{k}{\gcd(k,\ell)}$. 
The tightness of the lower bound $\gamma(C_\ell^k)\geq \frac{1}{p}$ is characterized as follows:
\begin{enumerate}
    \item For $p=2$, the bound was shown to be tight in~\cite{HLS2021}, with $\gamma(C^k_\ell) = \frac12$.
    \item For $p=3$, the bound is tight due to Theorem~\ref{thm:main_upper_bound}, with $\gamma(C^k_\ell) = \frac13$.
    \item For $p>3$, there exists infinitely many tight cycles $C_\ell^k$ for which $\gamma(C_\ell^k)>\frac{1}{p}$.\footnote{For instance, this occurs whenever $k \equiv i \pmod {6i}$ and $\gcd(k,\ell)=i$ for every $i\in \{1,2,3\}$.}
\end{enumerate}
\end{corollary}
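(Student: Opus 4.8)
The plan is to handle the three cases separately, assembling results already established in this section. For item~(1), where $p=2$: the condition $p=2$ is precisely $2 \mid \frac{k}{\gcd(k,\ell)}$, and since $\ell \ge 20k^2 \ge 2k^2$, equation~\eqref{equ:gamma=1/2} gives $\gamma(C_\ell^k)=\frac12=\frac1p$, so the bound is tight. For item~(2), where $p=3$: here $\frac{k}{\gcd(k,\ell)}$ has least prime factor $3$ and is therefore odd, so the hypothesis $2\nmid \frac{k}{\gcd(k,\ell)}$ of Theorem~\ref{thm:main_upper_bound} is met; that theorem yields $\gamma(C_\ell^k)\le\frac13$, while the general lower bound~\eqref{equ:>=1/p} gives $\gamma(C_\ell^k)\ge\frac1p=\frac13$. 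Hence $\gamma(C_\ell^k)=\frac13=\frac1p$, and the bound is again tight.

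The substance is in item~(3), where $p>3$, and for this I would invoke Theorem~\ref{thm:new_lower_bound_revised}. The key observation is that whenever $\gcd(k,\ell)\le 3$, the parameter $t$ of that theorem — the smallest odd integer at least $\max(3,\gcd(k,\ell))$ — equals $3$, so the theorem gives $\gamma(C_\ell^k)\ge \max\!\big(\tfrac1p,\tfrac13\big)=\tfrac13>\tfrac1p$, the strict inequality because $p\ge 5$. It then remains to exhibit infinitely many pairs $(k,\ell)$ with $k\nmid\ell$, $\ell\ge 20k^2$, $\gcd(k,\ell)\le 3$, and least prime factor of $\frac{k}{\gcd(k,\ell)}$ exceeding $3$. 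I would do this uniformly over $i\in\{1,2,3\}$: take $k=i(6s+1)$ for $s\ge 1$, so that if $\gcd(k,\ell)=i$ then $\frac{k}{\gcd(k,\ell)}=6s+1$, which is coprime to $6$ and hence has least prime factor at least $5$; and choose $\ell=i\,r$ for a prime $r$ with $r>\max(k,\,20k^2/i)$, which forces $\gcd(k,\ell)=i$ and $k\nmid\ell$ while keeping $\ell\ge 20k^2$. Letting $s$ (or simply $r$) range over infinitely many values produces infinitely many such tight cycles, establishing~(3) and, in particular, the footnoted examples $k\equiv i\pmod{6i}$ with $\gcd(k,\ell)=i$; the subfamilies $i=2,3$ moreover show the phenomenon persists even when $\gcd(k,\ell)\neq 1$.

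I do not anticipate a genuine obstacle, since every case reduces to bookkeeping with~\eqref{equ:gamma=1/2}, \eqref{equ:>=1/p}, Theorem~\ref{thm:main_upper_bound}, and Theorem~\ref{thm:new_lower_bound_revised}. The only steps requiring mild care are the arithmetic verifications in item~(3): that $6s+1$ is coprime to $6$ for every $s$ (so that $p\ge 5$ and the strict gap $\tfrac13>\tfrac1p$ really holds), that a suitable $\ell$ in the range $\ell\ge 20k^2$ can be pinned to have $\gcd(k,\ell)=i$, and that degenerate choices such as $s=0$ — where $k=i$ would divide every admissible $\ell$ — are excluded.
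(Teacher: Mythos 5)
Your proposal is correct and matches the argument the paper is implicitly relying on (the paper gives no explicit proof of \cref{cor:tightness_rephrased}, only the footnoted family of examples). Items (1) and (2) are direct reads of \eqref{equ:gamma=1/2} and of \eqref{equ:>=1/p} paired with \cref{thm:main_upper_bound}, exactly as you say. For item (3) your instantiation $k=i(6s+1)$, $\ell=ir$ with $r$ a prime exceeding $\max(k,20k^2/i)$ correctly yields $\gcd(k,\ell)=i\le 3$, $k\nmid\ell$, $\ell\ge 20k^2$, and $\tfrac{k}{\gcd(k,\ell)}=6s+1$ coprime to $6$, so $p\ge 5$ and $t=3$; then \cref{thm:new_lower_bound_revised} gives $\gamma(C_\ell^k)\ge\tfrac13>\tfrac1p$, and your family is precisely the $k\equiv i\pmod{6i}$, $\gcd(k,\ell)=i$ family of the paper's footnote. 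Your closing caveats (excluding $s=0$, ensuring $\gcd(k,\ell)=i$ exactly) are the right things to watch, and your note that $i=2,3$ addresses the $\gcd(k,\ell)\neq 1$ case of the Han--Lo--Sanhueza-Matamala question is apt.
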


\subsection{Implications to tight cycles minus an edge}

The $k$-uniform \emph{tight cycle minus an edge}, denoted by $C^{k-}_\ell$, is the hypergraph obtained from $C^k_\ell$ by removing a single edge. As a natural variant of the tight cycle, this structure has been actively studied. 
The Turán density of the $3$-uniform $C_\ell^{3-}$ was determined for sufficiently large $\ell$ not divisible by $3$ by Balogh and Luo~\cite{BL2024}, 
and was recently completed for all such $\ell \ge 5$ independently by Bodnár, León, Liu, and Pikhurko~\cite{BLLP2025minus} and by Lidický, Mattes, and Pfender~\cite{LMP2024}.
Regarding the codegree Turán density, 
Piga, Sales, and Schülke~\cite{PSS2023} proved for the 3-uniform case that $\gamma(C^{3-}_\ell) = 0$ for all $\ell \ge 5$. 
Recently, Sarkies~\cite{S2025} extended this to general uniformity, showing that 
\[
\gamma(C^{k-}_\ell) = 0 \mbox{ if and only if } \ell \equiv 0, \pm 1 \pmod k.
\]
In the remaining cases, Sarkies~\cite{S2025} showed that $\gamma(C^{k-}_\ell)\geq \gcd(k, \ell)/k$ if $\gcd(k, \ell) > 1$; in the co-prime case ($\gcd(k, \ell) = 1$), 
the lower bound he provided is minuscule, albeit strictly positive. 

We establish stronger lower bounds in both scenarios. 
These new lower bounds are substantial and, in many instances, match the following upper bound  inherited from \cref{thm:main_upper_bound}
\begin{equation}\label{equ:minus-upper}
\gamma(C^{k-}_\ell) \leq \gamma(C^{k}_\ell) \leq \frac{1}{3} ~ \mbox{ if } ~ 2\nmid \frac{k}{\gcd(k,\ell)}.
\end{equation}
We first present the improvement for $\gcd(k, \ell) > 1$, derived via a variant of the proof of Theorem~\ref{thm:new_lower_bound_revised}.

\begin{theorem}\label{thm:minus_edge_main}
Let $3\le k< \ell$ satisfy $\ell \not\equiv 0, \pm 1 \pmod k$ and $\gcd(k,\ell) > 1$.
Let $p$ be the smallest prime factor of $\frac{k}{\gcd(k,\ell)}$ and let $t$ be the smallest odd integer satisfying $t \ge \max(3, \gcd(k,\ell))$. Then
    \[ \gamma(C^{k-}_\ell) \ge \max\left(\frac{1}{p}, \frac{1}{t}\right). \]
\end{theorem}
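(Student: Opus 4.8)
The plan is to derive the bound from the very construction that proves \cref{thm:new_lower_bound_revised}, after checking that it forbids $C^{k-}_\ell$ and not merely $C^k_\ell$. Recall that the proof of \cref{thm:new_lower_bound_revised} produces $k$-uniform hypergraphs $G=G(n)$ --- members of the $(k,\ell;d)$-family of \cref{def:kld_family} for suitable values of the parameter $d$ --- that contain no copy of $C^k_\ell$ and satisfy $\delta_{k-1}(G)\ge(\tfrac{1}{p}-o(1))n$, respectively $\delta_{k-1}(G)\ge(\tfrac{1}{t}-o(1))n$. Since the minimum codegree of $G$ does not depend on which configuration we wish to avoid, it suffices to prove the following: \emph{under the hypotheses $\ell\not\equiv 0,\pm1\pmod k$ and $\gcd(k,\ell)>1$, these hypergraphs $G$ contain no copy of $C^{k-}_\ell$ either.} This gives $\ex_{k-1}(n,C^{k-}_\ell)\ge\delta_{k-1}(G)\ge(\max(\tfrac{1}{p},\tfrac{1}{t})-o(1))n$, and hence the theorem.

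First I would revisit, inside the proof of \cref{thm:new_lower_bound_revised}, the argument showing that $G$ is $C^k_\ell$-free. It begins with a putative embedding $v_1,\dots,v_\ell$ of $C^k_\ell$ and records, for each of the $\ell$ tight windows $e_i=\{v_i,\dots,v_{i+k-1}\}$ (indices cyclic), the ``edge constraint'' that the construction imposes on the labels of those $k$ vertices. Comparing the constraints of two overlapping consecutive windows $e_i$ and $e_{i+1}$ telescopes to a relation between the labels of $v_i$ and $v_{i+k}$; propagating these relations around the cycle forces the labels to be constant on residue classes modulo $\gcd(k,\ell)=d$, after which any single window constraint yields a contradiction (via $p\mid k/d$ in the first construction, and via the arithmetic role of $t$ in the second). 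By the cyclic symmetry of $C^k_\ell$ we may assume the edge deleted from $C^{k-}_\ell$ is $e_\ell$, so we keep the windows $e_1,\dots,e_{\ell-1}$ and lose exactly one edge constraint together with the two telescoping relations that straddle the deleted window. Equivalently, the auxiliary graph on $\{v_1,\dots,v_\ell\}$ that has an edge $\{v_i,v_{i+k\bmod\ell}\}$ for each surviving pair of consecutive windows loses precisely the two edges $\{v_{\ell-1},v_{k-1}\}$ and $\{v_\ell,v_k\}$.

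The crux is then elementary. Before deletion, this auxiliary graph is a disjoint union of $d$ cycles, one per residue class of the index modulo $d$, each of length $\ell/d\ge 3$ (as $k<\ell$ and $k\nmid\ell$). The two deleted edges lie in the classes $-1$ and $0$ modulo $d$ respectively; since $d\ge 2$ these classes differ, so each cycle loses at most one edge and stays connected. Hence the labels are still forced to be constant on residue classes modulo $d$, and any one of the $\ell-1$ surviving window constraints gives the same contradiction as before. This is exactly where the hypothesis $\gcd(k,\ell)>1$ enters; note that it also forces $\ell\not\equiv\pm1\pmod k$ automatically, while $\ell\not\equiv 0\pmod k$ is what makes $p$ well-defined, so the three congruence conditions are precisely the right ones --- and, consistently with Sarkies' result~\cite{S2025} that $\gamma(C^{k-}_\ell)=0$ when $\ell\equiv 0,\pm1\pmod k$, the construction must (and does) break down once $d=1$.

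I expect the main obstacle to be carrying out this ``lose two telescoping relations but remain connected'' reduction uniformly for \emph{both} members of the $(k,\ell;d)$-family --- the one delivering $1/p$ and the one delivering $1/t$ --- rather than for a single model. The $1/t$-construction is the delicate case: there $t$ need not divide $k/d$, and its $C^k_\ell$-freeness proof may take a more intricate form than the clean ``telescope to periodicity modulo $d$'' template sketched above (possibly invoking the ambient linear order), so one has to re-run that specific argument with one window removed rather than quote it as a black box. A minor additional point is that we need only forbid copies, not homomorphic copies, so the embedded vertices $v_1,\dots,v_\ell$ are genuinely distinct (which can only help), and one should also record that the codegree estimate for the $(k,\ell;d)$-family is independent of the forbidden configuration, so that nothing outside this analysis needs redoing.
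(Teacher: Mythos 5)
Your high-level plan coincides exactly with the paper's: take the $(k,\ell;d)$-family constructions that already prove \cref{thm:new_lower_bound_revised}, observe that their codegree does not depend on the forbidden configuration, and show that under $\gcd(k,\ell)>1$ they exclude $C^{k-}_\ell$ as well. Where you diverge is the mechanism for recovering the one constraint lost when an edge is deleted. You argue via connectivity: the relations $y_i=y_{i+k}$ form $d=\gcd(k,\ell)$ disjoint $\ell/d$-cycles on the residue classes modulo $d$; the two lost relations land in the distinct classes $0$ and $-1\pmod d$, so each cycle loses at most one edge, each stays connected, constancy on residue classes survives, and any remaining window yields the contradiction. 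The paper instead proves $z_1=v_C$ directly by a short double-counting identity
\[
\tfrac{\ell}{d}\,v_C \;=\; \sum_{d\mid i} z_i \;=\; \tfrac{k}{d}\sum_j y_j \;=\; \sum_{d\mid i-1} z_i \;=\; \bigl(\tfrac{\ell}{d}-1\bigr)v_C + z_1,
\]
where the two middle equalities exploit that windows spaced $d$ apart cover every vertex $k/d$ times, and the outer sums avoid and include $z_1$ respectively; crucially $\gcd(k,\ell)>1$ is what keeps $z_1$ out of the first sum. The two arguments are equivalent in power and both elementary; yours is more geometric, the paper's more compact.

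One clarification that would close the gap you flag about the $1/t$-construction: there is no need to ``re-run'' anything in a fundamentally different way, nor to invoke the ambient linear order. The telescope should simply be carried out on the \emph{indicator variables} $y_j=\mathbb{1}[v_j\in\bigcup_{m\in I_C}V_m]$ (where $C$ is the connected component of the type graph containing all the surviving windows' types, which exists because consecutive windows have equal or adjacent types), not on the raw labels $x_j$. Each surviving window has $\sum_{j=i}^{i+k-1}y_j = v_C$, so $y_i=y_{i+k}$ follows for every pair of surviving consecutive windows, and the rest of your cycle-connectivity argument then goes through verbatim; the final contradiction is that $\ell v_C = k\sum_j y_j$ forces $\tfrac{k}{d}\mid v_C$, contrary to property (P2). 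Modulo this translation (which the paper's $(k,\ell;d)$-family abstraction builds in), your proposal is correct.
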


For the co-prime case $\gcd(k,\ell) = 1$, we establish a lower bound $1/3$ for numerous pairs $(k,\ell)$, 
which, in addition, satisfy certain arithmetic conditions. 

\begin{theorem}\label{thm:minus_edge_main_2}
Let $3\le k< \ell$ be integers with $\ell \not\equiv 0, \pm 1 \pmod k$ and $\gcd(k,\ell) = 1$. If $\ell \not\equiv \pm 2 \pmod k$ and $3\ell \not\equiv \pm 1, \pm 2 \pmod k$, then
    \[ \gamma(C^{k-}_\ell) \ge \frac{1}{3}. \]
\end{theorem}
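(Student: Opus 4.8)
The plan is to mimic the construction underlying Theorem~\ref{thm:new_lower_bound_revised}, namely the $(k,\ell;d)$-family, but with the parameter $d=3$ and adapted to the co-prime setting so that the resulting hypergraph contains no copy of $C^{k-}_\ell$ rather than no copy of $C^k_\ell$. Recall that $C^{k-}_\ell$ has one fewer edge than $C^k_\ell$, so avoiding it is a genuinely stronger demand; the arithmetic conditions $\ell\not\equiv\pm 1,\pm 2\pmod k$ and $3\ell\not\equiv\pm1,\pm2\pmod k$ are precisely what we will need to rule out the "wrap-around'' closings that a missing edge would otherwise permit. First I would set up the vertex partition: take $V=V_0\cup V_1\cup V_2$ into three nearly equal parts and assign to each vertex $v$ a \emph{colour} $c(v)\in\mathbb{Z}_3$. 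Declare a $k$-set $e$ to be an edge if and only if $\sum_{v\in e}c(v)\equiv a\pmod 3$ for a suitable target residue $a$ (to be chosen depending on $k\bmod 3$). Since $\gcd(k,\ell)=1$ forces $p\ge 3$ and in particular $3\nmid$ (no constraint yet), the key point is that in this hypergraph every $(k-1)$-set has codegree essentially $n/3$: given $k-1$ vertices with colour-sum $s$, exactly the vertices of colour $a-s$ complete an edge, and each colour class has $\sim n/3$ vertices. This gives $\delta_{k-1}(G)=(1/3+o(1))n$.

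Next I would show $G$ is $C^{k-}_\ell$-free. Suppose $v_1,\dots,v_{\ell-1}$ (indices as in the tight path obtained by deleting the edge $\{v_\ell,v_1,\dots,v_{k-1}\}$, say) trace out a tight path all of whose $\ell-1$ consecutive $k$-windows are edges, closing up cyclically except for that one window. Summing the colour-sum conditions over all present windows and telescoping, each internal vertex $v_i$ is counted $k$ times while boundary vertices are counted fewer; this yields a linear relation in $\mathbb{Z}_3$ among the colours $c(v_i)$. The crucial step is that whenever $3\mid k$ this relation becomes vacuous, so one cannot forbid the cycle — but for $3\nmid k$ one extracts a forced value for $\sum_i c(v_i)$, and then the existence of the \emph{missing} edge's window imposes a contradictory constraint unless $\ell$ satisfies one of the forbidden congruences $\ell\equiv\pm1,\pm2\pmod k$ or $3\ell\equiv\pm1,\pm2\pmod k$. (The factor $3$ enters because the three colour classes cycle with period $3$ along any tight path, so the relevant index arithmetic is modulo $\mathrm{lcm}(3,k)$, and when $\gcd(k,\ell)=1$ one reduces modulo $k$ after clearing the $3$.) Here I also need the construction to be genuinely $3$-uniform-in-spirit, i.e. I should double-check that when $3\mid k$ a different target residue or a $\mathbb{Z}_3$-weighting (weights $1,\omega,\omega^2$ replaced by a non-trivial character) still works; this is exactly the place where the hypothesis is used, and I expect no issue since $3\nmid\ell$ is guaranteed by $\gcd(k,\ell)=1$ together with $\ell\not\equiv 0\pmod k$ only when $3\mid k$, so one handles $3\mid k$ and $3\nmid k$ separately.

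The main obstacle I anticipate is the $C^{k-}_\ell$-freeness argument rather than the codegree count: with one edge deleted, the linear constraints coming from the surviving windows no longer form a closed cycle, so the naive telescoping leaves a "defect'' supported on the $2(k-1)$ vertices near the gap, and I must argue that no assignment of colours to those boundary vertices can both satisfy all surviving window-equations and realise the gap as a \emph{non}-edge. Concretely, I would index the path vertices $v_1,\dots,v_{\ell+k-2}$ with $v_i$ and $v_{i+\ell}$ identified (reflecting the cyclic structure minus an edge), write each window constraint as $\sum_{j=0}^{k-1}c(v_{i+j})=a$, subtract consecutive ones to get $c(v_{i+k})=c(v_i)$, and propagate this periodicity across the path; periodicity with period $k$ combined with the cyclic identification forces period $\gcd(k,\ell)=1$, hence all colours on the long stretch are equal, which quickly contradicts a single window-sum unless $3\mid k$, while the short stretch near the gap is controlled by the extra congruences $3\ell\not\equiv\pm1,\pm2\pmod k$. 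I would then note that the $o(1)$ term and the "sufficiently many vertices'' clause in the definition of $\gamma$ are handled exactly as in the proof of Theorem~\ref{thm:new_lower_bound_revised} — balanced parts, a standard blow-up/cleaning argument — so I can quote that machinery rather than redo it, and conclude $\gamma(C^{k-}_\ell)\ge 1/3$.
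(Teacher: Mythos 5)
Your construction is, after unwinding the notation, precisely the base family $\mathcal{B}_3^k$ from the paper: take a $3$-colouring $c:V\to\mathbb{Z}_3$ with balanced colour classes and keep exactly the $k$-sets whose colour-sum is a fixed residue $a$. This is the hypergraph $H^k_{n,3}$ of Construction~\ref{cons:hls}, and the telescoping argument you sketch for it is correct as far as it goes: subtracting consecutive window constraints forces $c(v_i)=c(v_{i+k})$, and since $\gcd(k,\ell)=1$ the orbit of $i\mapsto i+k$ on $\mathbb{Z}_\ell$ is a single cycle, so deleting one window leaves a connected chain of equalities and \emph{all} $c(v_i)$ are forced equal, say to $c$. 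Each present window then sums to $kc\pmod 3$. The problem is that your dichotomy is exactly backwards. With $a\not\equiv 0\pmod 3$, the system $kc\equiv a$ has no solution precisely when $3\mid k$, so $H^k_{n,3}$ is $C^{k-}_\ell$-free only when $3\mid k$. When $3\nmid k$ there is a valid $c$ (namely $c\equiv k^{-1}a$), and $H^k_{n,3}$ contains an abundance of tight cycles through a single colour class — it is not even $C^k_\ell$-free, let alone $C^{k-}_\ell$-free. The theorem, however, makes no divisibility assumption on $k\bmod 3$; the hypotheses $\ell\not\equiv 0,\pm 1,\pm 2$ and $3\ell\not\equiv\pm1,\pm2\pmod k$ are compatible with any residue of $k$ mod $3$. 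So the plain colour-sum construction fails for an entire congruence class of $k$, and no choice of target residue $a$ repairs it.

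The paper's proof is structurally different. It works inside the $(k,\ell;d)$-family framework of Definition~\ref{def:kld_family} and first shows (Theorem~\ref{thm:lower_bound_part2} in the Appendix) that a $(k,\ell;3)$-family suffices for $C^{k-}_\ell$-freeness in the co-prime case \emph{provided} it satisfies an extra \emph{stability} condition: for every connected component $C$ of the type graph, the invariant $v_C$ must satisfy $\ell v_C\not\equiv\pm 1\pmod k$. It then builds such a family by local replacement: starting from $\mathcal{B}_3^k$, it removes the problematic types $\mathcal{P}=\mathcal{B}_3^k\cap\mathcal{F}$ where $\mathcal{F}=\mathrm{Perm}(k,0,0)\cup\mathrm{Perm}(\alpha,k-\alpha,0)$ and $\alpha\equiv\ell^{-1}\pmod k$, and patches in small replacement sets $\mathcal{R}_{\vec{x}}$. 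The arithmetic hypotheses that you wanted to use to rule out "wrap-around closings" actually serve an entirely different purpose in the paper's argument: $\ell\not\equiv\pm2$ and $3\ell\not\equiv\pm1,\pm2\pmod k$ guarantee that the types in $\mathcal{F}$ are pairwise at $L_1$-distance at least $10$, so the local replacements do not interfere with each other (Claims~\ref{claim:T_disjoint_F} and~\ref{claim:component_one_replacement} in the Appendix). The resulting edge set is \emph{not} defined by a single colour-sum condition — it is a union of well-chosen edge types, some of which violate the base congruence — and this is precisely what makes it work when $3\nmid k$. To repair your proposal you would need to replace the global colour-sum rule by a genuinely non-homogeneous edge-type family and then redo the cycle-freeness argument at the level of type-graph components and their invariants, which is what the paper's $(k,\ell;d)$-machinery accomplishes.
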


The proof of this lower bound uses a refined version of the  $(k,\ell;d)$-family and requires additional treatments compared to the previous constructions.  
We include its proof in \hyperref[sec: appendix1]{the Appendix}. 

Combining Theorems~\ref{thm:minus_edge_main} and \ref{thm:minus_edge_main_2} with the upper bounds \eqref{equ:gamma=1/2} and \eqref{equ:minus-upper}, we are able to determine the exact value of $\gamma(C_\ell^{k-})$ for infinitely many pairs $(k,\ell)$,
including a new set of integers $\ell$ of positive natural density for every fixed $k$.
We refer to Subsection~\ref{subsec:summary} for a summary of results, with details.

\subsection{Summary of Results}\label{subsec:summary}
We summarize the current best known bounds for $\gamma(C^k_\ell)$ and $\gamma(C^{k-}_\ell)$ in \cref{tab:summary_optimized}.

\begin{table}[htbp]
\centering
\caption{Best-known bounds for $\gamma(C^k_\ell)$ and $\gamma(C^{k-}_\ell)$ for sufficiently large $\ell$.}
\label{tab:summary_optimized}
\begin{tabular}{@{} 
    >{\raggedright\arraybackslash}p{3.3cm}
    >{\raggedright\arraybackslash}p{4.5cm}
    >{\raggedleft\arraybackslash}p{1.5cm}
    >{\centering\arraybackslash}p{4.2cm} 
    >{\centering\arraybackslash}p{1.8cm}
@{}}
\toprule
\textbf{Case Conditions} & \multicolumn{2}{l}{\textbf{Specific Parameters}} & \textbf{Bounds for $\gamma$} & \textbf{Ref.}\\
\midrule[\heavyrulewidth]

\multirow{2}{3.4cm}[-0.3em]{\textbf{Degenerate Cases}}
& \multicolumn{2}{l}{For $C^k_\ell$, when $k \mid \ell$} & $\gamma = 0$ & \cite{E1964} \\
\cmidrule(l){2-5}
& \multicolumn{2}{l}{For $C^{k-}_\ell$, when $\ell \equiv 0, \pm 1 \pmod k$} & $\gamma = 0$ & \cite{S2025,PSS2023}\\
\midrule[\heavyrulewidth]

\multirow{4}{3.4cm}[-0.4em]{$\bullet$ For $C^k_\ell$ where $k \nmid \ell$ \\[0.3em]  $\bullet$ For $C^{k-}_\ell$ where $\ell \not\equiv 0,\pm 1 \pmod k$ and $\gcd(k,\ell)>1$}
& \multicolumn{2}{l}{$p=2$} & $\gamma = 1/2$ & \cite{HLS2021} \\
\cmidrule(l){2-5}
& \multicolumn{2}{l}{$p=3$} & $\gamma = 1/3$ & $\hspace{0.24cm}\bigstar\hspace{0.13cm}$ \\
\cmidrule(l){2-5}
& \multicolumn{2}{l}{$p\geq 4$ and $\gcd(k,\ell) \le 3$} & $\gamma = 1/3$ & $\hspace{0.24cm}\bigstar\hspace{0.13cm}$ \\
\cmidrule(l){2-5}
& \multicolumn{2}{l}{$p\geq 4$ and $\gcd(k,\ell) \geq 4$} & $\max(1/p, 1/t) \le \gamma \le 1/3$ & $\hspace{0.24cm}\bigstar\hspace{0.13cm}$\,/\!$\hspace{0.24cm}\bigstar\hspace{0.13cm}$ \\
\midrule[\heavyrulewidth]


& \multirow{2}{=}[-0.3em]{%
    \raggedright
    If $\ell \equiv \pm 2 \pmod k$, \\
   or $3\ell \equiv \pm 1, \pm 2 \pmod k$%
}
&  $p=2$  & $0 < \gamma \le 1/2$ & \cite{S2025}\,/\!\cite{HLS2021}\\
\cmidrule(lr){3-5}
\multirow{3}{=}[0.68em]{$\bullet$ For $C^{k-}_\ell$ where $\ell \not\equiv 0, \pm 1 \pmod k$ and $\gcd(k,\ell)=1$}
& & $p \ge 3$ & $0 < \gamma \le 1/3$ & \cite{S2025}\,/\!$\hspace{0.24cm}\bigstar\hspace{0.13cm}$ \\
\cmidrule(l){2-5}

& \multirow{2}{3.5cm}[-0.2em]{Otherwise} 
& $p=2$ & $1/3 \le \gamma \le 1/2$ & $\hspace{0.24cm}\bigstar\hspace{0.13cm}$\,/\!\cite{HLS2021} \\
\cmidrule(lr){3-5} 
& & $p \ge 3$ & $\gamma = 1/3$ & $\hspace{0.24cm}\bigstar\hspace{0.13cm}$ \\

\bottomrule
\addlinespace
\multicolumn{5}{@{}p{\dimexpr\textwidth-2\tabcolsep}@{}}{%
\footnotesize Here, $p$ is the smallest prime factor of $k/\gcd(k,\ell)$ and $t$ is the smallest odd integer with $t \ge \max(3, \gcd(k,\ell))$. Prior results are cited accordingly, and new contributions established in this paper are marked with $\bigstar$. Citations for the lower and upper bounds are listed respectively, separated by a slash.} \\
\end{tabular}
\end{table}

\subsection{Organization.} 
The rest of the paper is organized as follows. In \cref{sec:2}, we introduce the Edge Type Framework and prove \cref{thm:new_lower_bound_revised} and \cref{thm:minus_edge_main}. This framework relies on \cref{thm:existence}, which is proved in \cref{sec:proof_of_existence}. In \cref{sec:4}, we prove \cref{thm:main_upper_bound} assuming \cref{thm:structural}. The proof of \cref{thm:structural} is then provided in \cref{sec:proof_of_structural}. Finally, we give some concluding remarks in \cref{sec:concluding}, followed by the proof of \cref{thm:minus_edge_main_2} in \hyperref[sec: appendix1]{the Appendix}.

\section{Lower Bound Constructions via $(k,\ell;d)$-Families}\label{sec:2}

In this section, we prove \cref{thm:new_lower_bound_revised,thm:minus_edge_main} using a general framework based on what we call \emph{edge types}.
We begin by reviewing the construction of Han, Lo, and Sanhueza-Matamala~\cite{HLS2021}, which yields the $1/p$ lower bound.
We then introduce the notion of edge types and use it to define a new combinatorial object, the $(k,\ell;d)$-family.
Finally, we show how the existence of such families leads to our lower bounds on the codegree Turán density.
The proof of the existence of these families (Theorem~\ref{thm:existence}) is deferred to Section~\ref{sec:proof_of_existence}.

\subsection{The HLS Construction and the \texorpdfstring{$1/p$}{1/p} Bound Revisited}\label{subsec:1/p}

Let us first present the construction of Han, Lo, and Sanhueza-Matamala. This construction serves as the basis for the more general framework we develop subsequently.

\begin{construction}[\!\!{\cite[Construction 10.1]{HLS2021}}]\label{cons:hls}
Let $k \ge 2$ and $d > 1$ be integers. For $n>0$, the $k$-graph $H_{n,d}^k$ is defined as follows. Given a vertex set $V$ of size $n$, partition it into $d$ disjoint vertex sets $V_1, \dots, V_d$ of sizes as equal as possible. Assume that every vertex in $V_i$ is labeled with $i \in \mathbb{Z}_d$. The edge set of $H_{n,d}^k$ consists of all $k$-subsets of $V$ for which the sum of the labels of their vertices is congruent to $1$ modulo $d$. \end{construction}
This construction yields the following lower bound. We present it in a slightly different form than the original proposition in~\cite{HLS2021}.\footnote{The original result (Proposition 10.1) in~\cite{HLS2021} is stated for an integer $d$ such that $d \mid k$ and $d \nmid \ell$, yielding a lower bound of $1/d$. The $1/p$ bound is a slightly stronger statement since the smallest prime factor $p$ of $k/\gcd(k,\ell)$ must be less than or equal to any such $d$.}

\begin{lemma}\label{lem:hls_general_version}
Let $2\le k< \ell$ satisfy  $k \nmid \ell$ and let $p$ be the smallest prime factor of $\frac{k}{\gcd(k,\ell)}$, then the hypergraph $H_{n, p}^k$ defined in Construction \ref{cons:hls} is $C_\ell^k$-free. Consequently,
$$\gamma(C^k_\ell)\geq \frac{1}{p}.$$
\end{lemma}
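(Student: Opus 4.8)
The plan is to establish two properties of the $k$-graph $H := H_{n,p}^k$ of Construction~\ref{cons:hls} with $d = p$ parts: first, a minimum-codegree lower bound of $(1/p - o(1))n$; second, the absence of any copy of $C^k_\ell$. The asserted bound $\gamma(C^k_\ell) \ge 1/p$ is then immediate from the definition of $\gamma$. For convenience I would write $\lambda(v) \in \mathbb{Z}_p$ for the label of a vertex $v$ and $\lambda(S) := \sum_{v \in S} \lambda(v) \in \mathbb{Z}_p$ for a vertex set $S$; note that $k \nmid \ell$ forces $\gcd(k,\ell) < k$, so $k/\gcd(k,\ell) \ge 2$ and $p$ is genuinely a prime. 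For the codegree bound, fix any $(k-1)$-set $S \subseteq V$: a vertex $v \notin S$ completes $S$ to an edge exactly when $\lambda(v) \equiv 1 - \lambda(S) \pmod p$, i.e.\ when $v$ lies in the single part $V_{1-\lambda(S)}$, so the codegree of $S$ is at least $|V_{1-\lambda(S)}| - (k-1) \ge \lfloor n/p\rfloor - k = (1/p - o(1))n$. This is the routine half.

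The heart of the argument is $C^k_\ell$-freeness, which I would prove by contradiction. Suppose $H$ contains a copy of $C^k_\ell$ on cyclically ordered vertices $u_0, \dots, u_{\ell-1}$, and set $a_i := \lambda(u_i)$ with indices read modulo $\ell$. Each of the $\ell$ tight edges gives $\sum_{j=0}^{k-1} a_{i+j} \equiv 1 \pmod p$. Subtracting the congruence at index $i$ from the one at $i+1$ collapses the telescoping sum to $a_{i+k} \equiv a_i \pmod p$ for every $i$; iterating this and using that $\{mk \bmod \ell : m \in \mathbb{Z}\} = \gcd(k,\ell)\,\mathbb{Z}_\ell$ together with $\gcd(k,\ell) \mid \ell$, the sequence $(a_i)$ must be constant on residue classes modulo $d := \gcd(k,\ell)$. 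Since $d \mid k$, the edge sum $\sum_{j=0}^{k-1} a_{i+j}$ then runs over exactly $k/d$ complete periods and equals $(k/d)\cdot B$, where $B := \sum_{r=0}^{d-1} a_r$; hence $(k/d)B \equiv 1 \pmod p$. But $p \mid k/\gcd(k,\ell) = k/d$, so the left side is $\equiv 0 \pmod p$, a contradiction. I expect this periodicity step — extracting $a_{i+k} \equiv a_i$ from the edge congruences and converting it, via $d \mid k$ and $p \mid k/d$, into the arithmetic contradiction — to be the only point requiring genuine care; everything else is mechanical, and since the argument never uses distinctness of the $u_i$, it equally shows $H$ has no homomorphic copy of $C^k_\ell$.

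Combining the two properties, $\ex_{k-1}(n, C^k_\ell) \ge \delta_{k-1}(H) \ge (1/p - o(1))n$ for all large $n$; dividing by $n$ and letting $n \to \infty$ gives $\gamma(C^k_\ell) \ge 1/p$, as desired.
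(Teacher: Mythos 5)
Your proof is correct and follows essentially the same route as the paper: derive the telescoping congruence $a_{i+k}\equiv a_i$, upgrade $k$-periodicity and $\ell$-periodicity to $\gcd(k,\ell)$-periodicity, and conclude $1\equiv(k/\gcd(k,\ell))\cdot B\equiv 0\pmod p$. You also spell out the minimum-codegree computation that the paper leaves implicit, and correctly observe the argument never uses distinctness of the cycle vertices, so it in fact rules out homomorphic copies as well.
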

\begin{proof}
Suppose for contradiction that $H_{n, p}^k$ contains a copy of $C_\ell^k$, denoted by $C=(v_1, \dots, v_{\ell})$. For each $i \in [\ell]$, let $x_i \in [p]$ be the unique label such that $v_i\in V_{x_i}$. All indices are taken modulo $\ell$.

By the definition of an edge in $H_{n,p}^k$, we have $\sum_{j=1}^{k} x_{i+j} \equiv 1 \pmod{p}$ for all $i \in [\ell]$. Subtracting the equation for $i$ from that for $i-1$ yields $x_i \equiv x_{i+k} \pmod{p}$. Thus, the sequence of labels $(x_i)_{i\ge 1}$ is periodic with period $k$. As the cycle structure implies periodicity of period $\ell$, the sequence must also be periodic with period $\gcd(k,\ell)$.
This periodicity allows us to write the sum over the first $k$ labels as a multiple of the sum over the first $\gcd(k,\ell)$ labels,
$ \sum_{j=1}^{k} x_j = \frac{k}{\gcd(k,\ell)} \sum_{j=1}^{\gcd(k,\ell)} x_j. $
By hypothesis, $p$ divides $k/\gcd(k,\ell)$, so we have that
$$ 1 \equiv \sum_{j=1}^{k} x_j \equiv\frac{k}{\gcd(k,\ell)} \sum_{j=1}^{\gcd(k,\ell)} x_j \equiv0 \pmod{p}, $$
which leads to the final contradiction.
\end{proof}

\subsection{A New Construction Framework: from Edge-types to $(k,\ell;d)$-Families}

We now develop our general framework, which will be sufficient for constructing new lower bounds for both $C^k_\ell$ and $C^{k-}_\ell$.

For integers $d \ge 1$ and $k \ge 0$, let $\mathcal{T}_d^k$ denote the set of all ordered $d$-tuples of non-negative integers that sum to $k$, defined as:
\[
   \mathcal{T}_d^k = \left\{(x_1,\dots,x_d) \in \mathbb{Z}_{\ge 0}^d \colon \sum_{i=1}^d x_i=k\right\}.
\]

Using this notation, we introduce the following definitions.

\begin{definition}
Given a vertex set $V$ partitioned into $V_1, \dots, V_d$, we say a $k$-edge has \emph{edge type} $\vec{x}=(x_1,\dots,x_d) \in \mathcal{T}_d^k$ if it contains exactly $x_i$ vertices from $V_i$ for each $i \in [d]$.
Two types $\vec{x}, \vec{y} \in \mathcal{T}_d^k$ are \emph{adjacent} if $\|\vec{x}-\vec{y}\|_1=\sum_{i=1}^d|x_i-y_i|=2$.
For any family of types $\mathcal{T}\subseteq \mathcal{T}_d^k$, this adjacency relation defines a \emph{type graph} $G_{\mathcal{T}}$ whose vertex set is $\mathcal{T}$ and whose edges connect adjacent types. The \emph{connected components} of $\mathcal{T}$ are defined as the connected components of $G_{\mathcal{T}}$.
\end{definition}

We can now restate the construction of $H_{n, d}^k$ using a specific family of edge types. We term this the \emph{base family}, denoted by $\mathcal{B}_d^k$, as it serves as the basis for our later constructions. It is defined as:
$$ \mathcal{B}_d^k = \left\{ (x_1,\dots,x_d)\in\mathcal{T}_d^k \colon \sum_{j=1}^d j\cdot x_j\equiv 1 \pmod{d} \right\}. $$
Consequently, $H_{n, d}^k$ is simply the collection of all edges whose type belongs to $\mathcal{B}_d^k$.

We illustrate the base families $\mathcal{B}_3^3$ and $\mathcal{B}_3^5$ in the figure below. In these diagrams, the lattice points represent the complete set of edge types $\mathcal{T}_3^3$ and $\mathcal{T}_3^5$, and the grid lines connect adjacent types. The red nodes highlight the members of the base families, while the blue triangles visualize the extension from $(k-1)$-tuples to $k$-edges. For instance, the blue arrow in the left diagram identifies the triangle corresponding to $\vec{y}=(0,2,0)$, whose three vertices represent the types extending $\vec{y}$ (namely $(1,2,0)$, $(0,3,0)$, and $(0,2,1)$). Crucially, the red nodes are distributed so that every blue triangle is incident to at least one red vertex.

\begin{center}
\begin{tikzpicture}[scale=1.4]
    \foreach \y in {0,...,2} {
    \foreach \x in {\y,...,2} {
      \fill[blue!25] 
        (\x*1 - \y*0.5, \y*0.866) -- 
        (\x*1 - \y*0.5 + 1, \y*0.866) -- 
        (\x*1 - \y*0.5 + 0.5, \y*0.866 + 0.866) -- 
        cycle;
    }
  }
  \foreach \y in {0,...,3} {
    \draw[thick] (0.5*\y, {0.866*\y}) -- ({3 - 0.5*\y}, {0.866*\y});
    \ifnum \y<3
      \foreach \x in {\y,...,2} {
        \draw[thick] ({\x+1 - 0.5*\y}, {0.866*\y}) -- ({\x+1- 0.5*(\y+1)}, {0.866*(\y+1)});
      }
    \fi
    \ifnum \y>0
      \foreach \x in {\y,...,3} {
        \pgfmathtruncatemacro{\prevx}{\x - 1}
        \draw[thick] ({\x - 0.5*\y}, {0.866*\y}) -- ({\prevx - 0.5*(\y-1)}, {0.866*(\y-1)});
      }
    \fi
  }
  
  \foreach \y in {0,...,3} {
    \foreach \x in {\y,...,3} {
      \pgfmathtruncatemacro{\check}{mod(2*\y-\x+2, 3)}
      \ifnum \check = 0
        \draw[fill=red, thick] (\x*1 - \y*0.5, \y*0.866) circle (3.5pt);  
      \else
        \draw[fill=white, thick] (\x*1 - \y*0.5, \y*0.866) circle (3.5pt);  
      \fi
    }
  }

    \node[below] at (3,-0.1) {$(0,0,3)$};
    \node[above] at (1.5,2.598) {$(0,3,0)$};
    \node[below] at (0,-0.1) {$(3,0,0)$};
    \node[left] at (0.4,0.9) {$(2,1,0)$};
    \node[below] at (2,-0.1) {$(1,0,2)$};
    \node[below] at (1,-0.1) {$(2,0,1)$};
    \node[right] at (2.1,1.8) {$(0,2,1)$};
    \node[left] at (0.9,1.8) {$(1,2,0)$};
    \node[right] at (2.6,0.9) {$(0,1,2)$};
    \draw[->, thick, >=stealth, blue] (1.5,2.598*0.777) -- ++(0.8, 0.6) node[right] {$(0,2,0)$};
\end{tikzpicture}
\begin{tikzpicture}
    \foreach \y in {0,...,4} {
    \foreach \x in {\y,...,4} {
      \fill[blue!25] 
        (\x*1 - \y*0.5, \y*0.866) -- 
        (\x*1 - \y*0.5 + 1, \y*0.866) -- 
        (\x*1 - \y*0.5 + 0.5, \y*0.866 + 0.866) -- 
        cycle;
    }
  }
  \foreach \y in {0,...,5} {
    \draw[thick] (0.5*\y, {0.866*\y}) -- ({5 - 0.5*\y}, {0.866*\y});
    \ifnum \y<5
      \foreach \x in {\y,...,4} {
        \draw[thick] ({\x+1 - 0.5*\y}, {0.866*\y}) -- ({\x+1- 0.5*(\y+1)}, {0.866*(\y+1)});
      }
    \fi
    \ifnum \y>0
      \foreach \x in {\y,...,5} {
        \pgfmathtruncatemacro{\prevx}{\x - 1}
        \draw[thick] ({\x - 0.5*\y}, {0.866*\y}) -- ({\prevx - 0.5*(\y-1)}, {0.866*(\y-1)});
      }
    \fi
  }
  
      \draw[thick] (0,0) -- (5*0.5,5*0.866) -- (5,0) -- cycle;  
      \foreach \y in {0,...,5} {
        \foreach \x in {\y,...,5} {
          \pgfmathtruncatemacro{\check}{mod(2*\y-\x+1, 3)}
          \ifnum \check = 0
            \draw[fill=red, thick] (\x*1 - \y*0.5, \y*0.866) circle (4pt);  
          \else
            \draw[fill=white, thick] (\x*1 - \y*0.5, \y*0.866) circle (4pt);  
          \fi
        }
      }
    \node[below right] at (5,-0.1) {$(0,0,5)$};
    \node[below] at (1,-0.1) {$(4,0,1)$};
    \node[below] at (4,-0.1) {$(1,0,4)$};
    \node[above] at (2.5,5*0.866) {$(0,5,0)$};
    \node[below left] at (0,-0.1) {$(5,0,0)$};
    \node[left,xshift=-1mm] at (0.5*5-0.5,5*0.866-0.866) {$(1,4,0)$};
    \node[left,xshift=-1mm] at (0.5*5-0.5-0.5-0.5,5*0.866-0.866-0.866-0.866) {$(3,2,0)$};
    \node[right,xshift=1mm] at (0.5*5+0.5,5*0.866-0.866) {$(0,4,1)$};
    \node[right,xshift=1mm] at (0.5*5+0.5+0.5+0.5,5*0.866-0.866-0.866-0.866) {$(0,2,3)$};
    \draw[->, thick, >=stealth, blue] (2.5, 3.75) -- ++(0.8, 0.4) node[right] {$(0,4,0)$};
\end{tikzpicture}

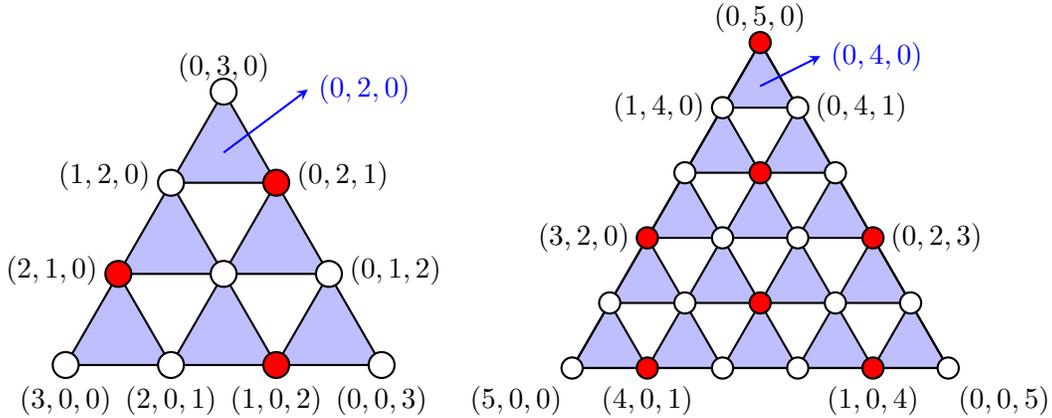
\captionof{figure}{Visual illustration of the base families $\mathcal{B}_3^3$ and $\mathcal{B}_3^5$}
\end{center}

The success of the above $H_{n, d}^k$ construction suggests a general new approach as follows: We first identify a ``well-behaved'' family of types $\mathcal{T}\subseteq \mathcal{T}_d^k$, and then define our hypergraph $H$ as the one comprising all edges whose type belongs to $\mathcal{T}$. 
The main task is thus to formalize the properties that make a family of types $\mathcal{T}$ ``well-behaved'', which must satisfy two crucial properties. First, to ensure a large minimum $(k-1)$-degree, we require that any set of $k-1$ vertices can be extended to an edge by adding a vertex from some partition $V_j$. Second, we impose a constraint on $\mathcal{T}$ to make the resulting hypergraph free of a tight cycle $C^k_\ell$ or a tight cycle with a missing edge $C^{k-}_\ell$.

This motivates the following definition of a $(k,\ell;d)$-family.

\begin{definition}\label{def:kld_family}
A family of types $\mathcal{T}\subseteq \mathcal{T}_d^k$ is a \emph{$(k,\ell;d)$-family} if it satisfies the following:
\begin{enumerate}
    \item[(P1).] For any tuple $\vec{y}= (y_1, \dots, y_d) \in \mathcal{T}_d^{k-1}$, there exists a type $\vec{x}= (x_1, \dots, x_d) \in \mathcal{T}\subseteq \mathcal{T}_d^k$ such that $\|\vec{x} - \vec{y}\|_1 = \sum_{i=1}^d |x_i - y_i| = 1$, or equivalently $\vec{y}$ is obtained from $\vec{x}$ by decreasing exactly one coordinate by 1.
    \item[(P2).] For every connected component $C$ of $G_{\mathcal{T}}$, there exists an index set $I_C \subseteq [d]$ and an integer $v_C$ such that $\frac{k}{\gcd(k,\ell)} \nmid v_C$ and for all $\vec{x} = (x_1, \dots, x_d) \in C$, we have $\sum_{i \in I_C} x_i = v_C$.
\end{enumerate}
\end{definition}

The following theorem asserts that under reasonable arithmetic assumptions on $k$ and $\ell$, such $(k,\ell;d)$-families exist,
with proof deferred to \cref{sec:proof_of_existence}. 

\begin{theorem}\label{thm:existence}
Let $3\le k< \ell$ be integers with $k \nmid \ell$. Let $p$ be the smallest prime factor of $\frac{k}{\gcd(k,\ell)}$, and let $t$ be the smallest odd integer satisfying $t \ge \max(3, \gcd(k,\ell))$. Then,
\begin{enumerate}
    \item A $(k,\ell;p)$-family exists.
    \item If $\frac{k}{\gcd(k,\ell)}\geq 5$, a $(k,\ell;t)$-family exists.
\end{enumerate}
\end{theorem}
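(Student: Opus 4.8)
The plan is to construct the required $(k,\ell;d)$-families explicitly, guided by the pictures of $\mathcal{B}_3^3$ and $\mathcal{B}_3^5$, and then verify properties (P1) and (P2) directly. For part (1), I would first handle the case where $\gcd(k,\ell) \le 2$ or where $p \mid k$ behaves nicely: here I expect the base family $\mathcal{B}_p^k$ itself to already be a $(k,\ell;p)$-family. Indeed, (P1) for $\mathcal{B}_p^k$ is exactly the claim illustrated in the figures — every "blue triangle" (the three types extending a given $\vec y\in\mathcal{T}_p^{k-1}$) meets a red node — and this should follow from a short counting argument: the three extensions of $\vec y$ have weighted sums $\sum_j j x_j$ forming three consecutive residues (or residues differing by fixed amounts) modulo $p$, so when $p\ge 3$ at least... wait, one needs the three extensions to realize all residues $\bmod p$, which fails for $p\ge 4$; so the genuinely correct statement is that the $k-1$ vertices can be extended by a vertex from \emph{some} part, i.e. the three values $\{w, w+1, \dots\}$ hit $1 \bmod p$ — this needs care, and for $p \ge 4$ the base family alone is \emph{not} enough, which is presumably why the theorem's second clause introduces the parameter $t$. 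For (P2) on $\mathcal{B}_p^k$: the type graph $G_{\mathcal{B}_p^k}$ is connected (adjacent types differ by moving one vertex between two parts, changing the weighted sum by a controlled amount, and within a single residue class one can navigate freely), so one takes $I_C = [p]$ and $v_C = k$; then $\frac{k}{\gcd(k,\ell)} \nmid k$ must hold, which is true precisely because $k \nmid \ell$ forces $\gcd(k,\ell) < k$, hence $\frac{k}{\gcd(k,\ell)} \ge 2$ and since $p \mid \frac{k}{\gcd(k,\ell)}$ while $\frac{k}{\gcd(k,\ell)} \nmid k$... — actually $\frac{k}{\gcd(k,\ell)}$ divides $k$ iff $\gcd(k,\ell) \mid \gcd(k,\ell)^2/\gcd(k,\ell)$, this needs to be checked, and is the arithmetic heart of why the construction is $C_\ell^k$-free (it mirrors Lemma~\ref{lem:hls_general_version}).

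For part (2), with $q := \frac{k}{\gcd(k,\ell)} \ge 5$ and $t$ the least odd integer $\ge \max(3,\gcd(k,\ell))$, the plan is to build a $(k,\ell;t)$-family whose type graph has \emph{several} connected components, using the extra room afforded by $t$ parts. The idea I would pursue: partition $\mathcal{T}_t^k$ into "level sets" according to the value of $x_1 + \cdots + x_{t-1} = k - x_t$ (equivalently by $x_t$), and on each level set $\{x_t = c\}$ place a shifted copy of a base-type pattern in the remaining $t-1$ coordinates designed so that (P1) is satisfied within that level and across adjacent levels, while (P2) holds by taking $I_C = \{t\}$ and $v_C = c$ for the component(s) inside level $c$ — this forces $q \nmid c$, so one must avoid the levels $c \in \{0, q, 2q, \dots\}$, which is possible because the levels range over $0,\dots,k$ and one can always "jump over" a forbidden level when $q \ge 5$ leaves enough spacing (here is where $q \ge 5$ versus $q \in \{2,3\}$ genuinely matters, and also where one uses $t$ odd to make a parity/covering argument close). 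Verifying (P1) — that every $\vec y \in \mathcal{T}_t^{k-1}$ has an in-neighbor in the family — will require checking both the "interior" extensions (within a level) and the extensions that change $x_t$, so one needs the chosen pattern on consecutive admissible levels to be compatible.

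The main obstacle, I expect, is simultaneously satisfying (P1) and (P2): (P1) wants $\mathcal{T}$ to be "dense" (hitting every face of the simplex $\mathcal{T}_t^{k-1}$), while (P2) wants each connected component to be "thin" (confined to an affine slice $\sum_{i\in I_C} x_i = v_C$ with $q \nmid v_C$). The tension is that making components thin tends to disconnect or sparsify $\mathcal{T}$, potentially leaving some $\vec y$ with no valid extension. Resolving this is exactly the combinatorial content of the construction: one must choose the slicing index set $I_C$ and the residue pattern so that the "good" types still form a dominating-type structure over $\mathcal{T}_t^{k-1}$. I would also flag two subsidiary arithmetic points that need to be nailed down: (i) that $q \nmid k$ — i.e. $\frac{k}{\gcd(k,\ell)} \nmid k$ — which is what makes the whole approach produce a $C_\ell^k$-free graph and which follows from $\gcd(k,\ell) \nmid k / \gcd(k,\ell)$, itself a consequence of $k \nmid \ell$ together with a $\gcd$ computation; and (ii) in part (2), that the admissible level values $c \in \{0,1,\dots,k\} \setminus q\mathbb{Z}$ together with the requirement $t \ge \gcd(k,\ell)$ leave enough parts to place a pattern satisfying (P1) — this is where the hypothesis $t \ge \max(3,\gcd(k,\ell))$ and the oddness of $t$ are consumed. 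Once the family is written down, checking (P1) and (P2) should be a finite verification, though bookkeeping-heavy; I would organize it by treating the weighted-sum/level value modulo $q$ as the single invariant controlling everything.
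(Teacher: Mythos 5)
Your proposal contains a genuine and fatal error in part (1), and part (2) rests on a sketch that diverges from the paper's method and has unresolved gaps you yourself flag.

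For part (1), your treatment of (P2) is wrong on two counts. First, the type graph $G_{\mathcal{B}_p^k}$ is not connected: by the paper's Claim~\ref{claim:base_family_props}, two types in $\mathcal{B}_p^k$ can never be adjacent (their weighted sums would differ by $i_1-i_2\not\equiv 0 \pmod p$), so the components are all \emph{singletons}. Second, and more seriously, taking $I_C=[p]$ and $v_C=k$ can never work, because $q:=\frac{k}{\gcd(k,\ell)}$ \emph{always divides} $k$; you assert the opposite as ``the arithmetic heart'' of the proof, but $k = q\cdot\gcd(k,\ell)$ makes $q\mid k$ automatic, so $\frac{k}{\gcd(k,\ell)}\nmid v_C$ fails. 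The paper instead exploits the singleton structure: the congruence $\sum_i i\,x_i\equiv 1\pmod p$ forces some coordinate $x_j$ with $p\nmid x_j$, so one sets $I_C=\{j\}$ and $v_C=x_j$; since $p\mid q$, it follows that $q\nmid v_C$. Your worry that (P1) ``fails for $p\ge 4$'' is also a misreading: a tuple $\vec y\in\mathcal{T}_p^{k-1}$ has $p$ extensions (one per coordinate), not three, and their weighted sums hit all residues modulo $p$, so exactly one lands in $\mathcal{B}_p^k$. The base family $\mathcal{B}_d^k$ satisfies (P1) for every $d$; it is (P2) that can fail, and only for types all of whose coordinates are multiples of $q$.

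For part (2), your level-set construction (slicing by $x_t$ and setting $I_C=\{t\}$, $v_C=c$) is a genuinely different plan from the paper's, but you do not resolve the tension you correctly identify: if a component spans two consecutive levels $c$ and $c+1$, the invariant $\sum_{i\in I_C}x_i$ is not constant on that component, and forbidding a whole level $c\in q\mathbb{Z}$ forces every $\vec y$ with $y_t=c$ to be covered from level $c\pm 1$, which constrains those neighboring levels to be essentially full and re-creates connectivity across forbidden boundaries. The paper sidesteps this entirely by a local surgery: it keeps $\mathcal{B}_t^k$, whose components are all singletons satisfying (P1), identifies the small set $\mathcal{P}$ of problematic singletons (those with every coordinate a multiple of $q$), uses $q\ge 5$ to show these are at pairwise $L^1$-distance at least $2q\ge 10$, deletes them, and inserts a replacement set $\mathcal{R}_{\vec x}$ around each deleted $\vec x$ chosen so that the new small components carry an invariant $v_C=\sum_{i\in I_C}x_i - 1 \equiv -1 \pmod q$. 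The oddness of $t$ is used at two specific spots (to show $(q,\dots,q)\notin\mathcal{B}_t^k$, and to build $I_C$ in the $\alpha-1,\alpha+1$ replacement case), not for a global parity argument over levels. Your sketch, as written, would need a substantially different bookkeeping scheme to make (P1) and (P2) compatible, and the key arithmetic device ($q$ never divides a coordinate that is congruent to $-1$ modulo $q$) is absent from it.
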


\subsection{Lower Bounds from $(k,\ell;d)$-Families}

We now introduce the following key theorem, which shows that the existence of $(k,\ell;d)$-families is sufficient to establish our desired lower bounds.

\begin{theorem}\label{thm:lower_bound_part1}
If there exists a $(k,\ell;d)$-family $\mathcal{T}$, then $$\gamma(C^k_\ell)\geq \frac{1}{d}.$$
Furthermore if $\gcd(k,\ell)>1$, then $$\gamma(C^{k-}_\ell)\geq \frac{1}{d}.$$
\end{theorem}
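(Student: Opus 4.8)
\textbf{Proof plan for Theorem~\ref{thm:lower_bound_part1}.}

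The plan is to take the $k$-graph $H$ on $n$ vertices whose edges are exactly those of type in $\mathcal{T}$, with the vertex set partitioned into $V_1,\dots,V_d$ of sizes as equal as possible, and show (i) $\delta_{k-1}(H) \ge (1/d - o(1))n$, and (ii) $H$ contains no copy of $C^k_\ell$ (resp. no copy of $C^{k-}_\ell$ when $\gcd(k,\ell)>1$). Together these give $\gamma(C^k_\ell) \ge 1/d$ (resp. $\gamma(C^{k-}_\ell)\ge 1/d$). For (i), fix any $(k-1)$-set $S$ and let $\vec{y}\in\mathcal{T}_d^{k-1}$ be its type. Property (P1) hands us a type $\vec{x}\in\mathcal{T}$ with $\vec{y}$ obtained from $\vec{x}$ by lowering one coordinate, say coordinate $j$; hence every vertex of $V_j \setminus S$ extends $S$ to an edge of type $\vec{x}\in\mathcal{T}$, giving $\deg_H(S) \ge |V_j| - (k-1) \ge n/d - k$. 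So $\delta_{k-1}(H) \ge (1/d-o(1))n$.

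For (ii), the heart of the argument is a periodicity observation generalizing the proof of Lemma~\ref{lem:hls_general_version}. Suppose $H$ contains a copy of $C^k_\ell$ on vertices $v_1,\dots,v_\ell$ (indices mod $\ell$), and for each $i$ let $c_i\in[d]$ be the part index with $v_i\in V_{c_i}$. For $i\in[\ell]$ let $\vec{x}^{(i)}\in\mathcal{T}$ be the type of the edge $\{v_i,\dots,v_{i+k-1}\}$. Consecutive edges $\{v_i,\dots,v_{i+k-1}\}$ and $\{v_{i+1},\dots,v_{i+k}\}$ differ by removing $v_i$ and adding $v_{i+k}$, so $\|\vec{x}^{(i)}-\vec{x}^{(i+1)}\|_1\le 2$; thus $\vec{x}^{(i)}$ and $\vec{x}^{(i+1)}$ lie in the same connected component $C$ of $G_{\mathcal{T}}$, and by cyclically going around, \emph{all} the types $\vec{x}^{(1)},\dots,\vec{x}^{(\ell)}$ lie in one common component $C$. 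Apply (P2) to get $I_C\subseteq[d]$ and $v_C$ with $\frac{k}{\gcd(k,\ell)}\nmid v_C$ and $\sum_{i\in I_C}x^{(j)}_i = v_C$ for every $j$. Now track the indicator sequence $a_i := \mathbf{1}[c_i\in I_C]$: the equation $\sum_{i\in I_C}x^{(j)}_i = v_C$ says $\sum_{r=0}^{k-1} a_{j+r} = v_C$ for all $j$, and subtracting consecutive instances gives $a_j = a_{j+k}$ for all $j$ (indices mod $\ell$). Hence $(a_i)$ is periodic with period $\gcd(k,\ell)$, and summing over one block of $k$ consecutive terms, $v_C = \sum_{r=0}^{k-1} a_{1+r} = \frac{k}{\gcd(k,\ell)} \sum_{r=0}^{\gcd(k,\ell)-1} a_{1+r}$, which forces $\frac{k}{\gcd(k,\ell)} \mid v_C$ — contradicting (P2). (One should also note that adjacent types in $G_{\mathcal{T}}$ are distinct; if $\vec{x}^{(i)}=\vec{x}^{(i+1)}$ the difference in $\ell_1$-norm is $0$ and they trivially lie in the same component, so the component-connectivity argument is unaffected.) This proves $H$ is $C^k_\ell$-free.

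For the $C^{k-}_\ell$ statement under $\gcd(k,\ell)>1$, I would argue: a copy of $C^{k-}_\ell$ is a copy of the tight path $P$ on $\ell$ vertices $v_1,\dots,v_\ell$ (the path obtained from $C^k_\ell$ by deleting one edge, which has $\ell-k+1$ edges). Repeat the same analysis for the $\ell-k+1$ edge types $\vec{x}^{(1)},\dots,\vec{x}^{(\ell-k+1)}$ (now with \emph{non-cyclic} indices): they all lie in a common component $C$, apply (P2), and deduce that the indicator sequence $(a_i)_{i=1}^\ell$ satisfies $a_j = a_{j+k}$ for $1\le j \le \ell-k$, i.e. it is periodic with period $k$ (no wraparound this time), with $\sum_{r=0}^{k-1}a_{j+r}=v_C$ for each valid $j$. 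Here is where $\gcd(k,\ell)>1$ enters: writing $g=\gcd(k,\ell)>1$, any window of $k$ consecutive terms of a $k$-periodic sequence has the same sum $v_C$, and I want to exhibit a window of length $k$ whose sum is visibly a multiple of $\frac{k}{g}$. Since $g>1$ the path has length $\ell \ge$ enough room, and one decomposes $[1,k]$ (or a suitably shifted window still inside $[1,\ell]$) into $\frac{k}{g}$ consecutive blocks each of length $g$; by the $k$-periodicity with $g\mid k$, the partial sums over these blocks relate, and a short computation yields $v_C \equiv 0 \pmod{\frac{k}{g}}$, the contradiction. (The $\gcd(k,\ell)>1$ hypothesis guarantees $\frac{k}{g} < k$, so such block decompositions are nontrivial; in the co-prime case this breaks down, which is exactly why the co-prime lower bound of Theorem~\ref{thm:minus_edge_main_2} needs the refined construction in the Appendix.)

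\textbf{Main obstacle.} The delicate point is the $C^{k-}_\ell$ part: the path has only $\ell-k+1$ edges rather than $\ell$, so one loses the clean cyclic wraparound that makes the period drop from $k$ to $\gcd(k,\ell)$. Getting the contradiction out of a merely $k$-periodic indicator sequence requires genuinely using $\gcd(k,\ell)>1$ — one must verify that the path is long enough to contain a length-$k$ window admitting the block decomposition and that the arithmetic actually closes (this is presumably where a mild length condition like $\ell \not\equiv 0,\pm 1 \pmod k$, already in the hypotheses of Theorems~\ref{thm:minus_edge_main} and~\ref{thm:minus_edge_main_2}, or the $\gcd$ condition, is being spent). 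The $\delta_{k-1}$ bound and the cyclic $C^k_\ell$-freeness are essentially immediate from (P1) and (P2) respectively, so I expect the write-up to be short apart from pinning down the path case precisely.
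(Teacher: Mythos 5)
Your degree bound and your proof of $C^k_\ell$-freeness are both correct, and the latter is essentially the paper's argument (you phrase it via $a_j = a_{j+k}$ and period reduction to $\gcd(k,\ell)$, while the paper sums $\ell v_C = k\sum y_j$ and invokes coprimality of $\ell/\gcd(k,\ell)$ and $k/\gcd(k,\ell)$; these are the same computation packaged differently). The $C^{k-}_\ell$ part, however, has a genuine gap, and it starts with a misidentification of the hypergraph.

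You write that ``a copy of $C^{k-}_\ell$ is a copy of the tight path on $\ell$ vertices $v_1,\dots,v_\ell$, which has $\ell-k+1$ edges.'' This is not what $C^{k-}_\ell$ is. Deleting a single edge from $C^k_\ell$ leaves $\ell-1$ edges, not $\ell-k+1$; the remaining edges still include the ones that wrap cyclically around (all windows $\{v_i,\dots,v_{i+k-1}\}$ for $i\in[\ell]\setminus\{1\}$, indices mod $\ell$). A tight path on $\ell$ vertices is a strictly smaller hypergraph (for $\ell>k+1$), so by reasoning about the path you throw away $k-2$ edges' worth of constraints, and precisely the wraparound constraints you discard are the ones that make the argument close. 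Your remaining step --- ``decompose $[1,k]$ into $k/g$ blocks of length $g$; by $k$-periodicity the partial sums relate'' --- does not go through: $k$-periodicity of $(a_i)$ places no constraint whatsoever on how the mass of a single period is distributed among its $g$-blocks, so there is no reason for $v_C$ to be a multiple of $k/g$ from this alone. Your own hedging (``presumably where a mild length condition like $\ell\not\equiv 0,\pm1\pmod k$ is being spent'') is a symptom of this: the theorem has no such hypothesis, and it is not needed.

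The paper closes the $C^{k-}_\ell$ case differently. Keeping the cyclic indexing, it knows $z_i := \sum_{j=i}^{i+k-1} y_j = v_C$ for all $i\in[\ell]\setminus\{1\}$, and the goal is to show the missing equation $z_1 = v_C$ (after which the cycle argument finishes). This is done by a double count: both $\sum_{i\equiv 0\ (\mathrm{mod}\ g)} z_i$ and $\sum_{i\equiv 1\ (\mathrm{mod}\ g)} z_i$ equal $\tfrac{k}{g}\sum_j y_j$ (each $y_j$ is covered exactly $k/g$ times because $g\mid k$ and the windows wrap cyclically), and since $g>1$ excludes $i=1$ from the first sum but not from the second, comparing them yields $z_1 = v_C$. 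This is where $\gcd(k,\ell)>1$ is genuinely used: it is what guarantees that the residue class $0\pmod g$ does not contain the bad index $1$. The hypothesis is spent cleanly there, not in any length or block-decomposition consideration.
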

\begin{proof}
Let $H$ be the $k$-uniform hypergraph constructed on a sufficiently large $n$-vertex set $V$ from the given $(k,\ell;d)$-family $\mathcal{T}$. The construction partitions $V$ into $d$ nearly equal-sized sets $V_1, \dots, V_d$, and $E(H)$ consists of all $k$-sets whose type is in $\mathcal{T}$.

We first prove that $H$ is $C^k_\ell$-free by contradiction.  Assume $H$ contains a copy of $C^k_\ell$ on the vertices $(v_1, \dots, v_\ell)$ then $\{v_i, \dots, v_{i+k-1}\}$ is an edge for all $i\in[\ell]$, where all indices are modulo $\ell$. The tight structure of $C^k_\ell$ implies that the types of any two consecutive edges are either identical or adjacent. Therefore, all such edges must have types belonging to a single connected component $C$ in $G_{\mathcal{T}}$.  As $\mathcal{T}$ is a $(k,\ell;d)$-family, this component $C$ is associated with a non-empty index set $I_C \subseteq [d]$ and an integer invariant $v_C$. For $j\in[\ell]$, let $y_j=1$ if $v_j \in \bigcup_{m \in I_C} V_m$, and $y_j=0$ if $v_j \notin \bigcup_{m \in I_C} V_m$. Then for any $t \in [\ell]$,
$$  \sum_{j=t}^{t+k-1} y_j=v_C . $$

Summing this for all $t\in [\ell]$ gives $$\ell v_C=\sum_{t\in[\ell]}\sum_{j=t}^{t+k-1} y_j=k\sum_{j=1}^{\ell} y_j.$$ This means $\frac{\ell}{\gcd(k,\ell)}\cdot v_C =\frac{k}{\gcd(k,\ell)}\cdot \sum_{j=1}^{\ell} y_j$. As $\frac{k}{\gcd(k,\ell)}$ and $\frac{\ell}{\gcd(k,\ell)}$ are coprime, $v_C$ must be divisible by $ \frac{k}{\gcd(k,\ell)}$, a contradiction.

If $\gcd(k,\ell)>1$, assume $H$ contains a copy of $C^{k-}_\ell$ on the vertices $(v_1, \dots, v_\ell)$ and that  $\{v_i, \dots, v_{i+k-1}\}$ is an edge for all $i\in[\ell]\setminus \{1\}$ where all indices are modulo $\ell$. Contradiction can be derived as before, provided we can first show that $\sum_{j=1}^{k} y_j=v_C$ as well. We let $z_i=\sum_{j=i}^{i+k-1} y_j$, then $z_i=v_C$ for all $i\in[\ell]\setminus \{1\}$. The following equation shows that $\sum_{j=1}^{k} y_j=z_1=v_C$.
$$\frac{\ell}{\gcd(k,\ell)}\cdot v_C=\sum_{\substack{i\in[\ell] \\ \gcd(k,\ell)\mid i}}z_i=\frac{k}{\gcd(k,\ell)}\cdot \sum_{j=1}^{\ell} y_j=\sum_{\substack{i\in[\ell] \\ \gcd(k,\ell)\mid i-1}}z_i=(\frac{\ell}{\gcd(k,\ell)}-1)\cdot v_C +z_1.$$
Note that the first equality relies on $\gcd(k,\ell) > 1$, which excludes index $1$ from the summation.
\end{proof}

\cref{thm:new_lower_bound_revised} and \cref{thm:minus_edge_main} now follow immediately by combining two key results: \cref{thm:existence}, which guarantees the existence of specific $(k,\ell;d)$-families, and \cref{thm:lower_bound_part1}, which converts their existence into lower bounds on the codegree Turán density.

\begin{proof}[Proof of \cref{thm:new_lower_bound_revised} and \cref{thm:minus_edge_main}]
If $p \in \{2,3\}$, by \cref{thm:existence}, a $(k,\ell;p)$-family exists. Thus, by \cref{thm:lower_bound_part1}, $\gamma(C^k_\ell)\geq \frac{1}{p}= \max( \frac{1}{p},\frac{1}{t})$, and  $\gamma(C^{k-}_\ell)\geq \frac{1}{p}= \max( \frac{1}{p},\frac{1}{t})$ when $\gcd(k,\ell) > 1$.

If $p > 3$, then $\frac{k}{\gcd(k,\ell)} \geq p \geq 5$. By \cref{thm:existence}, a $(k,\ell; p)$-family and a $(k,\ell; t)$-family exist. Thus, by \cref{thm:lower_bound_part1}, $\gamma(C^k_\ell)\geq \max( \frac{1}{p},\frac{1}{t})$, and  $\gamma(C^{k-}_\ell)\geq\max( \frac{1}{p},\frac{1}{t})$ when $\gcd(k,\ell) > 1$.\end{proof}

\section{Proof of \cref{thm:existence}}
\label{sec:proof_of_existence}
This section is devoted to the proof of \cref{thm:existence}. We construct the required families and verify that they satisfy the conditions of Definition \ref{def:kld_family}. Throughout the section, we let  $\vec{e}_i$ denote the vector in $\mathbb{R}^d$ with 1 in the $i$-th coordinate and 0 in all other coordinates. Our construction relies on the base family
\[\mathcal{B}_d^k =\left\{ (x_1,\dots,x_d)\in\mathcal{T}_d^k \colon \sum_{j=1}^d j\cdot x_j\equiv 1 \pmod{d} \right\}.\]
The following claim establishes two fundamental properties of this family.
\begin{claim}\label{claim:base_family_props}
Let $k,\ell,d$ be positive integers. The family $\mathcal{B}_d^k$ satisfies the following:
\begin{enumerate}
    \item For any $\vec{y} \in \mathcal{T}_d^{k-1}$, there exists a type $\vec{x} \in \mathcal{B}_d^k$ such that $\vec{x} = \vec{y} + \vec{e}_j$ for some $j \in [d]$.
    \item The connected components of $\mathcal{B}_d^k$ are isolated vertices.
\end{enumerate}
\end{claim}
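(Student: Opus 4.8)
\textbf{Proof plan for Claim~\ref{claim:base_family_props}.}

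The plan is to prove the two parts essentially independently, with both reducing to elementary observations about the linear form $\phi(\vec{x}) = \sum_{j=1}^d j\cdot x_j \pmod d$ on the lattice of types. The key point is to track how $\phi$ changes under the elementary moves $\vec{y}\mapsto \vec{y}+\vec{e}_j$ and $\vec{x}\mapsto \vec{x}-\vec{e}_i+\vec{e}_j$.

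For part (1), fix $\vec{y}\in \mathcal{T}_d^{k-1}$ and set $s = \phi(\vec{y}) = \sum_{j=1}^d j\cdot y_j \pmod d$. Adding $\vec{e}_j$ to $\vec{y}$ produces a type in $\mathcal{T}_d^k$ with $\phi$-value $s+j \pmod d$. As $j$ ranges over $\{1,\dots,d\}$, the value $s+j$ runs over all residues modulo $d$; in particular there is a choice of $j\in[d]$ with $s+j\equiv 1\pmod d$, and then $\vec{x} := \vec{y}+\vec{e}_j\in\mathcal{B}_d^k$ and $\|\vec{x}-\vec{y}\|_1 = 1$, as required. (Note this needs no arithmetic hypothesis relating $k,\ell,d$; the statement is purely about $d$.)

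For part (2), suppose $\vec{x},\vec{x}'\in\mathcal{B}_d^k$ are adjacent in $G_{\mathcal{B}_d^k}$, so $\|\vec{x}-\vec{x}'\|_1 = 2$ with both summing to $k$; this forces $\vec{x}' = \vec{x} - \vec{e}_i + \vec{e}_j$ for some distinct $i,j\in[d]$. Then $\phi(\vec{x}') - \phi(\vec{x}) = j - i \not\equiv 0 \pmod d$ since $i\neq j$ and $1\le i,j\le d$ (so $|i-j|\in\{1,\dots,d-1\}$). Hence $\phi(\vec{x}')\not\equiv \phi(\vec{x})\equiv 1\pmod d$, so $\vec{x}'\notin\mathcal{B}_d^k$ — a contradiction. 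Therefore no two distinct members of $\mathcal{B}_d^k$ are adjacent, i.e.\ every connected component of $G_{\mathcal{B}_d^k}$ is a single vertex.

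There is no real obstacle here; the mild subtlety is just to make explicit that in part (2) an adjacency between two types that both sum to $k$ must have the form $-\vec{e}_i+\vec{e}_j$ (a single coordinate decreased and a single coordinate increased, rather than, say, one coordinate changed by $2$), which follows since $\|\vec{x}-\vec{x}'\|_1=2$ together with $\sum(x_m - x'_m) = 0$ forces exactly one coordinate to go up by $1$ and exactly one to go down by $1$. Both parts then reduce to the observation that the residues $\{1,2,\dots,d\}$ cover $\mathbb{Z}_d$ and contain no nonzero multiple of $d$ among their pairwise differences.
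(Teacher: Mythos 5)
Your proof is correct and follows essentially the same argument as the paper: part (1) picks the unique residue $j\in[d]$ making $\sum_i i\,y_i + j \equiv 1 \pmod d$, and part (2) uses that an adjacency between two types of equal total degree must be of the form $-\vec{e}_i+\vec{e}_j$, forcing $i\equiv j\pmod d$, a contradiction. The only cosmetic difference is your explicit justification that $\|\vec{x}-\vec{x}'\|_1=2$ with equal coordinate sums forces the $-\vec{e}_i+\vec{e}_j$ form, which the paper leaves implicit.
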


\begin{poc}
For the first property, given any $\vec{y} \in \mathcal{T}_d^{k-1}$, let $j \in [d]$ be the unique index satisfying $j \equiv 1 - \sum_{i=1}^d i y_i \pmod d$. Define $\vec{x} = \vec{y} + \vec{e}_j\in \mathcal{T}_d^k$. Then $\sum_{i=1}^d i x_i = \sum_{i=1}^d i y_i + j \equiv (\sum_{i=1}^d i y_i) + 1 - (\sum_{i=1}^d i y_i) \equiv 1 \pmod d$. Thus, $\vec{x} \in \mathcal{B}_d^k$.

For the second property, assume for contradiction that two distinct types $\vec{x}, \vec{x}' \in \mathcal{B}_d^k$ are adjacent. We can then write $\vec{x}' = \vec{x} - \vec{e}_{i_1} + \vec{e}_{i_2}$ for distinct $i_1, i_2 \in [d]$. Their defining sums satisfy $(\sum i x_i) - (\sum i x'_i) = i_1 - i_2 $. However, $(\sum i x_i) - (\sum i x'_i)\equiv 1-1 \equiv 0 \pmod d$. This implies $i_1 \equiv i_2 \pmod d$, which contradicts the fact that $i_1$ and $i_2$ are distinct elements of $[d]$.
\end{poc}

With these properties, we begin by showing that the type family $\mathcal{B}_p^k$,  which corresponds to Construction \ref{cons:hls} introduced by Han, Lo, and Sanhueza-Matamala~\cite{HLS2021}, is a $(k,\ell; p)$-Family

\subsection{Constructing a $(k,\ell; p)$-Family via Direct Algebraic Construction}

\begin{proof}[Proof of Theorem~\ref{thm:existence} (1)]
Let $p$ be the smallest prime factor of $\frac{k}{\gcd(k,\ell)}$. We claim that $\mathcal{B}_p^k =\{ (x_1,\dots,x_p)\in\mathcal{T}_p^k \colon \sum_{j=1}^p j\cdot x_j\equiv 1 \pmod{p} \}$
is a $(k,\ell; p)$-family. To establish this, we verify the two properties from Definition \ref{def:kld_family}.

By Claim \ref{claim:base_family_props}, for any $\vec{y}=(y_1, \dots, y_p) \in \mathcal{T}_p^{k-1}$, there exists a type $\vec{x}=(x_1, \dots, x_p) \in \mathcal{B}_p^k$ such that $\vec{x} = \vec{y} + \vec{e}_j$ for some $j \in [p]$ and thus $\|\vec{x} - \vec{y}\|_1 = \sum_{i=1}^p |x_i - y_i| = 1$.

By Claim \ref{claim:base_family_props}, $G_{\mathcal{T}}$ consists of isolated vertices. 
Since each component $C$ is a singleton $\{\vec{x}=(x_1, \dots, x_p)\}$, we must find a suitable invariant. The defining congruence $\sum i x_i \equiv 1 \pmod p$ ensures that at least one coordinate $x_j$ is not divisible by $p$, as the sum would otherwise be divisible by $p$. We can therefore set the index set $I_C = \{j\}$ and the value $v_C = x_j$. Because $p$ is a prime factor of $\frac{k}{\gcd(k,\ell)}$ and $p$ does not divide $v_C$, it follows that $\frac{k}{\gcd(k,\ell)}$ cannot divide $v_C$.

With both properties satisfied, the proof is complete. \end{proof}

\subsection{Constructing a $(k,\ell; t)$-Family via Local Replacement}
\label{subsec:local_replacement}
The proof of the second statement relies on a local replacement method, which we outline below. Recall that $p$ is the smallest prime factor of $k/\gcd(k,\ell)$, and by assumption, $k/\gcd(k,\ell) \ge 5$.

Our method begins with a base family $\mathcal{B}_d^k$. This family is constructed such that all its connected components are singletons, and satisfies the first property in Definition \ref{def:kld_family}. However, some of these singleton components violate the second property in Definition \ref{def:kld_family}. Let $\mathcal{P}$ be the set of these problematic types. A type $\vec{x}$ is in $\mathcal{P}$ if and only if every coordinate of $\vec{x}$ is a multiple of $k/\gcd(k,\ell)$. The condition $k/\gcd(k,\ell) \ge 5$ ensures these problematic types are well-separated.

Our construction of the final family proceeds by first removing all types in $\mathcal{P}$ from $\mathcal{B}_d^k$. This action resolves the issue of the second property in Definition \ref{def:kld_family} but disrupts the first property in Definition \ref{def:kld_family} at those locations. Then, for each type $\vec{x} \in \mathcal{P}$, we introduce a corresponding replacement set, $\mathcal{R}_{\vec{x}}$. This set is carefully chosen to restore the first property in Definition \ref{def:kld_family}  locally and to ensure that the new connected components formed by types in $\mathcal{R}_{\vec{x}}$ satisfy the second property in Definition \ref{def:kld_family}. The fact that the types in $\mathcal{P}$ are well-separated guarantees these local replacements do not interfere with one another.

The resulting family $\mathcal{T}$ is therefore defined as:
$$\mathcal{T} = \left( \mathcal{B}_d^k\setminus \mathcal{P} \right) \cup \left( \bigcup_{{\vec{x}} \in \mathcal{P}} \mathcal{R}_{\vec{x}}\right).$$

To make the method just described more concrete, we now consider the case where $k=5$ and $5 \nmid \ell$, and construct a corresponding $(k,\ell;3)$-family. This process is shown in the following three figures. First, the left figure shows the base family $\mathcal{B}_3^5 = \{(x_1,x_2,x_3) \in \mathcal{T}_3^5 \mid x_1+2x_2 \equiv 1 \pmod{3}\}$. Here the set of problematic types $\mathcal{P}$ consists of a single element ${\vec{v}}=(0,5,0)$. The middle figure shows the family after $v$ has been removed. This removal leaves the tuple $(0,4,0)$ uncovered, and the red triangle highlights this gap, as its vertices represent the types $(0,5,0), (1,4,0),$ and $(0,4,1)$, which are required to cover the tuple $(0,4,0)$. To remedy this, we introduce the replacement set $\mathcal{R}_{\vec{v}}= \{(1,4,0)\}$, yielding the final family $\mathcal{T} = (\mathcal{B}_3^5 \setminus \{{\vec{v}}\}) \cup \mathcal{R}_{\vec{v}}$ shown on the right. In this final family, the new connected component $C = \{(1,4,0), (1,3,1)\}$ satisfies the second property in Definition \ref{def:kld_family} with the invariant set $I_C=\{1\}$ and value $v_C=1$.

\begin{figure}[H]
\begin{minipage}{0.28\textwidth}
    \centering
    \begin{tikzpicture}[scale=0.9]
      \def\k{5}
      \foreach \y in {0,...,\numexpr\k-1} {
        \foreach \x in {\y,...,\numexpr\k-1} {\fill[blue!25] (\x - 0.5*\y, \y*0.866) -- (\x+1 - 0.5*\y, \y*0.866) -- (\x+0.5 - 0.5*\y, \y*0.866+0.866) -- cycle;}
      }
      \foreach \y in {0,...,\k} {
        \draw[thick] (0.5*\y, {0.866*\y}) -- ({\k - 0.5*\y}, {0.866*\y});
        \ifnum \y<\k \foreach \x in {\y,...,\numexpr\k-1} {\draw[thick] ({\x+1 - 0.5*\y}, {0.866*\y}) -- ({\x+1- 0.5*(\y+1)}, {0.866*(\y+1)});} \fi
        \ifnum \y>0 \foreach \x in {\y,...,\k} {\draw[thick] ({\x - 0.5*\y}, {0.866*\y}) -- ({\x - 1 - 0.5*(\y-1)}, {0.866*(\y-1)});} \fi
      }
      \foreach \y in {0,...,\k} {
        \foreach \x in {\y,...,\k} {
          \pgfmathtruncatemacro{\check}{mod(\k - \x + 2*\y - 1, 3)}
          \ifnum \check = 0 \draw[fill=red, thick] (\x - \y*0.5, \y*0.866) circle (4.5pt);
          \else \draw[fill=white, thick, draw=black] (\x - \y*0.5, \y*0.866) circle (4.5pt); \fi
        }
      }
      \node[below, font=\scriptsize] at (\k,0) {$(0,0,5)$};
      \node[above,yshift=2mm] at (0.5*\k, \k*0.866) {$(0,5,0)$};
      \node[below, font=\scriptsize] at (0,0) {$(5,0,0)$};
    \node[left,xshift=-1mm] at (0.5*\k-0.5,\k*0.866-0.866) {$(1,4,0)$};
    \node[right,xshift=1mm] at (0.5*\k+0.5,\k*0.866-0.866) {$(0,4,1)$};
    \draw[->, thick, >=stealth, blue] (2.5, 3.75) -- ++(1.2, 0.8) node[right] {$(0,4,0)$};
    \node[below] at (2.5, -0.5) {\large $\mathcal{B}_3^5 \vphantom{(\mathcal{B}_3^5 \setminus \{{\vec{v}}\}) \cup \mathcal{R}_{\vec{v}}}$};
    \end{tikzpicture}
\end{minipage}\hspace{4mm} {\Large$\rightarrow$ }\hspace{-5mm}
\begin{minipage}{0.28\textwidth}
    \centering
    \begin{tikzpicture}[scale=0.9]
      \def\k{5}
      \foreach \y in {0,...,\numexpr\k-1} {
        \foreach \x in {\y,...,\numexpr\k-1} {\fill[blue!25] (\x - 0.5*\y, \y*0.866) -- (\x+1 - 0.5*\y, \y*0.866) -- (\x+0.5 - 0.5*\y, \y*0.866+0.866) -- cycle;}
      }
      \fill[red!50] (5-0.5*4, 4*0.866) -- (4-0.5*4, 4*0.866) -- (5-0.5*5, 5*0.866) -- cycle;
      \foreach \y in {0,...,\k} {
        \draw[thick] (0.5*\y, {0.866*\y}) -- ({\k - 0.5*\y}, {0.866*\y});
        \ifnum \y<\k \foreach \x in {\y,...,\numexpr\k-1} {\draw[thick] ({\x+1 - 0.5*\y}, {0.866*\y}) -- ({\x+1- 0.5*(\y+1)}, {0.866*(\y+1)});} \fi
        \ifnum \y>0 \foreach \x in {\y,...,\k} {\draw[thick] ({\x - 0.5*\y}, {0.866*\y}) -- ({\x - 1 - 0.5*(\y-1)}, {0.866*(\y-1)});} \fi
      }
      \foreach \y in {0,...,\k} {
        \foreach \x in {\y,...,\k} {
        \pgfmathtruncatemacro{\check}{mod(\k - \x + 2*\y - 1, 3)}
          \ifnum\x=5 \ifnum\y=5 \def\check{1} \fi\fi 
          \ifnum \check = 0 \draw[fill=red, thick] (\x - \y*0.5, \y*0.866) circle (4.5pt);
          \else \draw[fill=white, thick, draw=black] (\x - \y*0.5, \y*0.866) circle (4.5pt); \fi
        }
      }
      \draw[red, dashed, thick] (5 - 5*0.5, 5*0.866) circle (8pt);
      \node[below, font=\scriptsize] at (\k,0) {$(0,0,5)$};
      \node[above,yshift=2mm] at (0.5*\k, \k*0.866) {$(0,5,0)$};
      \node[below, font=\scriptsize] at (0,0) {$(5,0,0)$};
    \node[left,xshift=-1mm] at (0.5*\k-0.5,\k*0.866-0.866) {$(1,4,0)$};
    \node[right,xshift=1mm] at (0.5*\k+0.5,\k*0.866-0.866) {$(0,4,1)$};
    \draw[->, thick, >=stealth, red] (2.5, 3.75) -- ++(1.2, 0.8) node[right] {$(0,4,0)$};
    \node[below] at (2.5, -0.5) {\large $\mathcal{B}_3^5 \setminus \{{\vec{v}}\}$};
    \end{tikzpicture}
\end{minipage}\hspace{4mm} {\Large$\rightarrow$ }\hspace{-5mm}
\begin{minipage}{0.28\textwidth}
    \centering
    \begin{tikzpicture}[scale=0.9] 
      \def\k{5}
      \foreach \y in {0,...,\numexpr\k-1} {
        \foreach \x in {\y,...,\numexpr\k-1} {\fill[blue!25] (\x - 0.5*\y, \y*0.866) -- (\x+1 - 0.5*\y, \y*0.866) -- (\x+0.5 - 0.5*\y, \y*0.866+0.866) -- cycle;}
      }
      \foreach \y in {0,...,\k} {
        \draw[thick] (0.5*\y, {0.866*\y}) -- ({\k - 0.5*\y}, {0.866*\y});
        \ifnum \y<\k \foreach \x in {\y,...,\numexpr\k-1} {\draw[thick] ({\x+1 - 0.5*\y}, {0.866*\y}) -- ({\x+1- 0.5*(\y+1)}, {0.866*(\y+1)});} \fi
        \ifnum \y>0 \foreach \x in {\y,...,\k} {\draw[thick] ({\x - 0.5*\y}, {0.866*\y}) -- ({\x - 1 - 0.5*(\y-1)}, {0.866*(\y-1)});} \fi
      }
      \foreach \y in {0,...,\k} {
        \foreach \x in {\y,...,\k} {
          \pgfmathtruncatemacro{\check}{mod(\k - \x + 2*\y - 1, 3)}
          \ifnum\x=5 \ifnum\y=5 \def\check{1} \fi\fi 
          \ifnum\x=4 \ifnum\y=4 \def\check{0} \fi\fi 
          \ifnum \check = 0 \draw[fill=red, thick] (\x - \y*0.5, \y*0.866) circle (4.5pt);
          \else \draw[fill=white, thick, draw=black] (\x - \y*0.5, \y*0.866) circle (4.5pt); \fi
        }
      }
      \draw[red, dashed, thick] (5 - 5*0.5, 5*0.866) circle (8pt);
      \draw[red, solid, thick] (4 - 4*0.5, 4*0.866) circle (8pt);
      \node[below, font=\scriptsize] at (\k,0) {$(0,0,5)$};
      \node[above,yshift=2mm] at (0.5*\k, \k*0.866) {$(0,5,0)$};
      \node[below, font=\scriptsize] at (0,0) {$(5,0,0)$};
    \node[left,xshift=-1mm] at (0.5*\k-0.5,\k*0.866-0.866) {$(1,4,0)$};
    \node[right,xshift=1mm] at (0.5*\k+0.5,\k*0.866-0.866) {$(0,4,1)$};
    \draw[->, thick, >=stealth, blue] (2.5, 3.75) -- ++(1.2, 0.8) node[right] {$(0,4,0)$};
    \node[below] at (2.5, -0.5) {\large $(\mathcal{B}_3^5 \setminus \{{\vec{v}}\}) \cup \mathcal{R}_{\vec{v}}$};
    \end{tikzpicture}
\end{minipage}
\caption{Construction of a $(k,\ell;3)$-family for $k=5, 5 \nmid \ell$.}
\label{fig:replacement_illustration}
\end{figure}
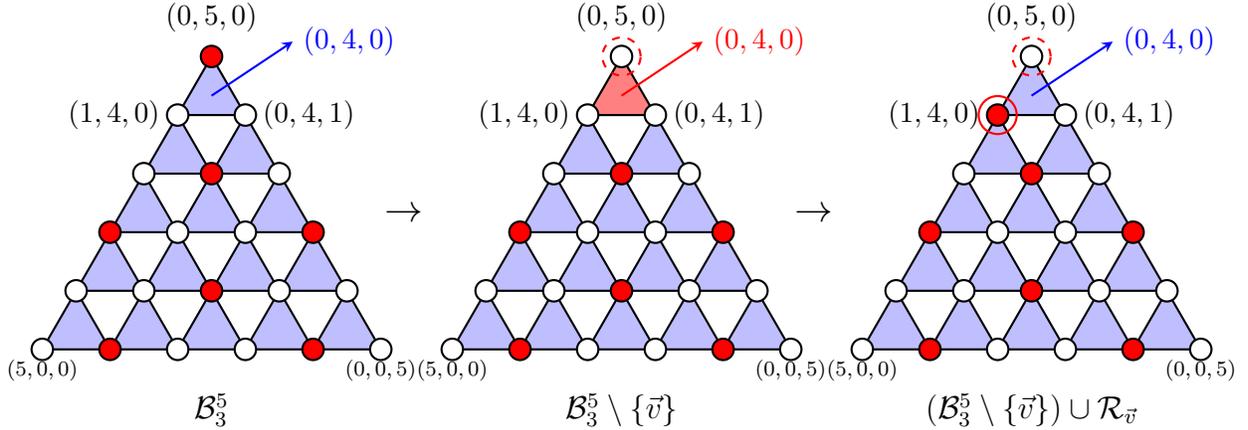

We now provide the formal proof of the second statement  of Theorem~\ref{thm:existence}. 

\begin{proof}[Proof of Theorem~\ref{thm:existence} (2)]
Let $q = \frac{k}{\gcd(k,\ell)}$. By assumption, we have $q \ge 5$. Set $d=t$, where $t$ is the smallest odd integer such that $t \ge \max(3, \gcd(k,\ell))$. 

First, we begin with the base family  $\mathcal{B}_d^k= \left\{ \vec{x} = (x_1, \dots, x_d) \in \mathcal{T}_d^k \colon \sum_{i=1}^d i \cdot x_i \equiv 1 \pmod d \right\}$. By  Claim \ref{claim:base_family_props}, $\mathcal{B}_d^k$ satisfies  the first property in Definition \ref{def:kld_family} and the connected components of $\mathcal{B}_d^k$ are isolated vertices.

Now we modify this base family to satisfy the second property in Definition \ref{def:kld_family}. We first define the set of problematic types as
\[ \mathcal{P} := \left\{ \vec{x} = (x_1, \dots, x_d) \in \mathcal{B}_d^k \colon \forall i \in [d], q \mid x_i \right\}. \]
Any type $\vec{x} \in \mathcal{B}_d^k \setminus \mathcal{P}$ has at least one coordinate $x_j$ not divisible by $q$. Its component is the singleton $C=\{\vec{x}\}$, for which we can choose $I_C = \{j\}$ and $v_C = x_j$, satisfying  the second property in Definition \ref{def:kld_family} as $q \nmid v_C$. Thus, we only need to consider the types in $\mathcal{P}$.

\begin{claim}\label{claim:zero_component}
For any type $\vec{x}=(x_1,\dots,x_d) \in \mathcal{P}$, there exists an index $\alpha \in [d]$ with $x_\alpha=0$.
\end{claim}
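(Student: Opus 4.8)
The plan is to argue by contradiction: suppose that $\vec{x} = (x_1, \dots, x_d) \in \mathcal{P}$ has every coordinate $x_i > 0$. Since $\vec{x} \in \mathcal{P}$, each $x_i$ is a positive multiple of $q$, hence $x_i \ge q$ for all $i \in [d]$. Summing over all coordinates gives $k = \sum_{i=1}^d x_i \ge dq$. Recalling that $d = t$ is the smallest odd integer with $t \ge \max(3, \gcd(k,\ell))$ and that $q = k/\gcd(k,\ell)$, this inequality reads $k \ge t \cdot \frac{k}{\gcd(k,\ell)}$, i.e. $\gcd(k,\ell) \ge t$.

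Next I would note that this forces $t = \gcd(k,\ell)$ exactly (combined with $t \ge \gcd(k,\ell)$ from the definition of $t$), and in particular $\gcd(k,\ell)$ is odd and $\gcd(k,\ell) \ge 3$. But then the inequality chain $k = \sum x_i \ge dq = t q = \gcd(k,\ell) \cdot \frac{k}{\gcd(k,\ell)} = k$ is an equality throughout, which forces $x_i = q$ for every $i$. Now I would bring in the defining congruence of the base family: since $\vec{x} \in \mathcal{B}_d^k$, we have $\sum_{i=1}^d i \cdot x_i \equiv 1 \pmod d$. Substituting $x_i = q$ yields $q \sum_{i=1}^d i = q \cdot \frac{d(d+1)}{2} \equiv 1 \pmod d$. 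Since $d = t$ is odd, $\frac{d(d+1)}{2} = d \cdot \frac{d+1}{2}$ is a multiple of $d$, so the left-hand side is $\equiv 0 \pmod d$, giving $0 \equiv 1 \pmod d$. As $d \ge 3$, this is the desired contradiction.

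The one subtlety to handle carefully is the edge case where the set $[d]$ is small or $q$ is being compared against the right quantities; but since $d = t \ge 3$ and $q \ge 5$ by the standing assumption of Theorem~\ref{thm:existence}(2), no degeneracy arises, and the parity argument on $\frac{d(d+1)}{2}$ is exactly where the choice of $t$ to be \emph{odd} pays off. I expect the main (minor) obstacle to be simply making the inequality bookkeeping airtight — in particular verifying that $x_i \ge q$ for all $i$ genuinely follows from $x_i$ being a positive multiple of $q$, and that the subsequent squeeze to $x_i = q$ is valid — after which the modular computation closes things cleanly. No deep idea is needed beyond recognizing that an all-positive type in $\mathcal{P}$ would be ``too large'' to fit in $\mathcal{T}_d^k$ unless it is the uniform vector, which the base-family congruence then rules out.
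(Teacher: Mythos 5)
Your proof is correct and follows essentially the same route as the paper: assume all coordinates positive, use $x_i \ge q$ together with $d = t \ge \gcd(k,\ell)$ to squeeze $\sum x_i = k$ into the equality $x_i = q$ for all $i$, and then derive $0 \equiv 1 \pmod d$ from the base-family congruence and the oddness of $d$. The only cosmetic difference is that you split the inequality chain into two steps (deducing $t = \gcd(k,\ell)$ as an intermediate observation) where the paper writes a single chain $k \ge dq \ge k$, but the content is identical.
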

\begin{poc}
Assume for contradiction that $\vec{x} \in \mathcal{P}$ has $x_i > 0$ for all $i \in [d]$. Since $\vec{x} \in \mathcal{P}$, each $x_i$ is a multiple of $q$, so $x_i \ge q$. This gives the inequality
\[ k = \sum_{i=1}^d x_i \ge \sum_{i=1}^d q = d \cdot q  \ge \gcd(k,\ell) \cdot \frac{k}{\gcd(k,\ell)} = k. \]
This forces all inequalities to be equalities, which implies $x_i = q$ for all $i \in [d]$. As $\vec{x}$ is in $\mathcal{P} \subseteq \mathcal{B}_d^k$, it must satisfy the condition $\sum_{i=1}^d i x_i \equiv 1 \pmod d$. However, since $d$ is odd by definition, $ \sum_{i=1}^d i x_i =  \sum_{i=1}^d i\cdot q =  \frac{d(d+1)}{2}\cdot q= (q\cdot\frac{d+1}{2})\cdot d \equiv 0 \not\equiv 1\pmod d $,  which is a contradiction.
\end{poc}

For each $\vec{x} \in \mathcal{P}$, we now define a replacement set $\mathcal{R}_{\vec{x}}$. By Claim~\ref{claim:zero_component}, we can fix an index $\alpha \in [d]$ such that $x_\alpha=0$. All indices are considered modulo $d$. We define
\[ \mathcal{R}_{\vec{x}} := \left( \left\{ \vec{x} - \vec{e}_i + \vec{e}_{i-1} : i \in [d] \setminus \{\alpha, \alpha+1\} \right\} \cup \left\{ \vec{x} - \vec{e}_{\alpha+1} + \vec{e}_{\alpha-1} \right\} \right) \cap \mathcal{T}_d^k. \]

Let us first verify that  the first property in Definition \ref{def:kld_family} is restored. When a type $\vec{x} \in \mathcal{P}$ is removed, the tuples of the form $\vec{y} = \vec{x} - \vec{e}_j\in \mathcal{T}_d^{k-1}$ lose their covering type $\vec{x}$. The claim below will establish that for each such tuple $\vec{y}$, there is a new type in the replacement set $\mathcal{R}_{\vec{x}}$ which covers it.

\begin{claim}\label{claim:extension_restored}
For $\vec{x} \in \mathcal{P}$, if a tuple $\vec{y} \in \mathcal{T}_d^{k-1}$ satisfies $\vec{y} = \vec{x} - \vec{e}_j$ for some index $j \in [d]$, then there exists a type $\vec{x}' \in \mathcal{R}_{\vec{x}}$ such that $\vec{y} = \vec{x}' - \vec{e}_{j'}$ for some index $j' \in [d]$.
\end{claim}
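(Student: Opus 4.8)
The plan is a short case analysis on the index $j$, driven by the observation that membership of $\vec{x}$ in $\mathcal{P}$ forces every nonzero coordinate of $\vec{x}$ to be a multiple of $q \ge 5$. First I would extract the only fact needed from the hypothesis $\vec{y} = \vec{x} - \vec{e}_j \in \mathcal{T}_d^{k-1}$: its $j$-th coordinate is nonnegative, so $x_j \ge 1$, and since $q \mid x_j$ this upgrades to $x_j \ge q \ge 5$. In particular $j \ne \alpha$, where $\alpha$ is the (fixed) index with $x_\alpha = 0$ supplied by Claim~\ref{claim:zero_component}. The inequality $x_j \ge 1$ is exactly what will guarantee that the replacement vector I write down actually lies in $\mathcal{T}_d^k$, and hence in $\mathcal{R}_{\vec{x}}$ after the intersection in its definition.

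Next I would split on whether $j = \alpha + 1$. If $j \ne \alpha+1$, then $j \in [d] \setminus \{\alpha, \alpha+1\}$, so $\vec{x}' := \vec{x} - \vec{e}_j + \vec{e}_{j-1}$ is one of the defining vectors of $\mathcal{R}_{\vec{x}}$; only coordinate $j$ is decreased (from $x_j \ge 1$) while the rest are unchanged or increased, so $\vec{x}' \in \mathcal{T}_d^k$, whence $\vec{x}' \in \mathcal{R}_{\vec{x}}$, and $\vec{x}' - \vec{e}_{j-1} = \vec{x} - \vec{e}_j = \vec{y}$, so $j' = j-1$ does the job. If instead $j = \alpha+1$, the natural candidate $\vec{x} - \vec{e}_{\alpha+1} + \vec{e}_\alpha$ is the one vector deliberately omitted from $\mathcal{R}_{\vec{x}}$, so I would use the extra element instead: $\vec{x}' := \vec{x} - \vec{e}_{\alpha+1} + \vec{e}_{\alpha-1}$. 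Here $x_{\alpha+1} = x_j \ge 1$ again gives $\vec{x}' \in \mathcal{T}_d^k \cap \mathcal{R}_{\vec{x}}$, and $\vec{x}' - \vec{e}_{\alpha-1} = \vec{x} - \vec{e}_{\alpha+1} = \vec{y}$, so $j' = \alpha-1$ works. All index manipulations are modulo $d$; since $d \ge 3$ the residues $\alpha-1,\alpha,\alpha+1$ are pairwise distinct (and $j-1 \ne j$), so there is no degenerate overlap to worry about.

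I do not expect a genuine obstacle in this claim: it is essentially bookkeeping about which coordinate-shifts are admissible. The only point needing care is the case $j = \alpha+1$, together with recognizing that it is precisely the reason the definition of $\mathcal{R}_{\vec{x}}$ carries the single non-cyclic replacement $\vec{x} - \vec{e}_{\alpha+1} + \vec{e}_{\alpha-1}$ alongside the ``shift one step down'' vectors $\vec{x} - \vec{e}_i + \vec{e}_{i-1}$. The more delicate part of the overall construction, namely checking that the connected components produced by the sets $\mathcal{R}_{\vec{x}}$ admit invariants as in property (P2) of Definition~\ref{def:kld_family}, is a separate step in the proof of Theorem~\ref{thm:existence}(2) and does not enter here.
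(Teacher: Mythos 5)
Your argument is correct and coincides with the paper's proof of this claim: both note that $x_j \ge 1$ forces $j \ne \alpha$ and then split into the two cases $j = \alpha+1$ (using $\vec{x}' = \vec{x} - \vec{e}_{\alpha+1} + \vec{e}_{\alpha-1}$, $j' = \alpha-1$) and $j \in [d]\setminus\{\alpha,\alpha+1\}$ (using $\vec{x}' = \vec{x} - \vec{e}_j + \vec{e}_{j-1}$, $j' = j-1$). You supply a little extra detail the paper leaves implicit, namely the explicit check that $\vec{x}'$ lands in $\mathcal{T}_d^k$ and that $d\ge 3$ keeps the indices $\alpha-1,\alpha,\alpha+1$ distinct, but the argument is substantively the same.
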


\begin{poc}
All indices are considered modulo $d$. Let $\vec{x} \in \mathcal{P}$ and $\vec{y} = \vec{x} - \vec{e}_j$ for some $j \in [d]$. By Claim~\ref{claim:zero_component}, there exists an index $\alpha \in [d]$ such that $x_\alpha=0$. Since $x_j$ must be positive for $\vec{y}$ to be a valid tuple, we have $j \neq \alpha$.  We choose $\vec{x}'$ and $j'$ based on the value of $j$ as follows:
\begin{itemize}
    \item If $j=\alpha+1$, we choose $\vec{x}' = \vec{x} - \vec{e}_{\alpha+1} + \vec{e}_{\alpha-1}$ and $j'=\alpha-1$.
    \item If $j \in [d] \setminus \{\alpha, \alpha+1\}$, we choose $\vec{x}' = \vec{x} - \vec{e}_j + \vec{e}_{j-1}$ and $j'=j-1$.
\end{itemize} In both cases, $\vec{x}' \in \mathcal{R}_{\vec{x}}$ and $\vec{x}' - \vec{e}_{j'} = \vec{y}$.
\end{poc}

We aim to show the following family is a $(k,\ell;d)$-family:
$$\mathcal{T} := \left( \mathcal{B}_d^k \setminus \mathcal{P} \right) \cup \left( \bigcup_{\vec{x} \in \mathcal{P}} \mathcal{R}_{\vec{x}} \right).$$

By Claim \ref{claim:base_family_props} and Claim \ref{claim:extension_restored}, this family $\mathcal{T}$ satisfies the first property in Definition \ref{def:kld_family}.
It remains to verify that for each connected component $C$ of the type-graph $G_{\mathcal{T}}$, there exists an index set $I_C \subseteq [d]$ and an integer $v_C$ with $q \nmid v_C$ such that $\sum_{i \in I_C} z_i = v_C$ for all $\vec{z} \in C$.

We now analyze the connected components of $\mathcal{T}$. We will eventually show that the connected components of $\mathcal{T}$ are either a single type from $\mathcal{B}_d^k \setminus \mathcal{P}$, or contain exactly one type from $\bigcup_{\vec{x} \in \mathcal{P}} \mathcal{R}_{\vec{x}}$ and contains its neighbor, forming a star-like structure. We first show that the replacement types in $\bigcup_{\vec{x} \in \mathcal{P}} \mathcal{R}_{\vec{x}}$ lie in distinct connected components.

\begin{claim}\label{claim:component_separation}
Any two distinct types in $\bigcup_{\vec{x} \in \mathcal{P}} \mathcal{R}_{\vec{x}}$ belong to different connected components of $\mathcal{T}$.
\end{claim}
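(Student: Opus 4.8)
The plan is to show that the replacement types are pairwise non-adjacent and, moreover, that no two of them can be joined by a path inside $G_{\mathcal{T}}$. First I would record the ``well-separatedness'' of $\mathcal{P}$, which comes from $q \ge 5$: if $\vec{x}, \vec{x}'' \in \mathcal{P}$ are distinct, then since every coordinate of each is a multiple of $q$, any two distinct elements of $\mathcal{P}$ differ by at least $q$ in some coordinate, hence $\|\vec{x}-\vec{x}''\|_1 \ge 2q \ge 10$. Consequently, for $\vec{r} \in \mathcal{R}_{\vec{x}}$ and $\vec{r}'' \in \mathcal{R}_{\vec{x}''}$ with $\vec{x}\neq\vec{x}''$, the triangle inequality gives $\|\vec{r}-\vec{r}''\|_1 \ge \|\vec{x}-\vec{x}''\|_1 - \|\vec{r}-\vec{x}\|_1 - \|\vec{r}''-\vec{x}''\|_1 \ge 2q - 2 - 2 = 2q-4 \ge 6$; since any neighbour of $\vec{r}$ in $G_{\mathcal{T}}$ differs from it by $\ell_1$-distance $2$, a path of length $m$ from $\vec{r}$ to $\vec{r}''$ would force $\|\vec{r}-\vec{r}''\|_1 \le 2m$, so $\vec r, \vec r''$ lie in different components unless the path leaves the ``$\mathcal R$-types'' region — but I must rule that out too, which I do below by noting every non-$\mathcal{R}$ type on such a path lies in $\mathcal{B}_d^k\setminus\mathcal{P}$, and by Claim~\ref{claim:base_family_props}(2) such a type has no neighbour in $\mathcal{B}_d^k\setminus\mathcal P$, so it can only be adjacent to $\mathcal R$-types, keeping the path effectively pinned near one $\vec{x}$.

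Next I would handle two replacement types coming from the \emph{same} $\vec{x}\in\mathcal{P}$. By construction $\mathcal{R}_{\vec{x}} \subseteq \{\vec{x}-\vec{e}_i+\vec{e}_j : i\neq j\}$, i.e.\ every element of $\mathcal{R}_{\vec{x}}$ is at $\ell_1$-distance exactly $2$ from $\vec{x}$. Two such types $\vec{x}-\vec{e}_i+\vec{e}_{i'}$ and $\vec{x}-\vec{e}_m+\vec{e}_{m'}$ are adjacent in $G_{\mathcal T}$ only if their $\ell_1$-distance is $2$; a short case check on the index pairs $\{(i,i') : i\in[d]\setminus\{\alpha,\alpha+1\}, i'=i-1\}\cup\{(\alpha+1,\alpha-1)\}$ shows that distinct such pairs always yield $\ell_1$-distance $4$ (the shift-down-by-one pattern makes the ``moved'' coordinates disjoint unless the pairs coincide, and the special pair $(\alpha+1,\alpha-1)$ does not collide with any $(i,i-1)$ because $i\ne\alpha$ forces $i-1\ne\alpha-1$ and $i\ne\alpha+1$). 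Hence the types in $\mathcal{R}_{\vec{x}}$ are pairwise non-adjacent, and each is adjacent (within $\mathcal{T}$) only to types in $\mathcal{B}_d^k\setminus\mathcal{P}$ — in particular to $\vec x$'s ``neighbour'' type described in the surrounding text — giving the claimed star-like components. Combining with the previous paragraph, any two distinct replacement types, whether from the same $\vec{x}$ or from different ones, lie in different connected components of $\mathcal{T}$.

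The main obstacle I anticipate is the bookkeeping in the ``different $\vec{x}$'' case: one has to be careful that a connected path in $G_{\mathcal T}$ between $\vec r\in\mathcal R_{\vec x}$ and $\vec r''\in\mathcal R_{\vec x''}$ cannot sneak through the base family, since a priori $\mathcal{B}_d^k\setminus\mathcal{P}$ is large. The resolution is exactly Claim~\ref{claim:base_family_props}(2): within $\mathcal{B}_d^k$ there are no adjacencies at all, so on any $G_{\mathcal{T}}$-path the base-family vertices are isolated from each other and each is sandwiched between $\mathcal{R}$-types; since all $\mathcal{R}$-types are within $\ell_1$-distance $2$ of their generating $\vec{x}\in\mathcal{P}$ and distinct elements of $\mathcal{P}$ are $\ell_1$-distance $\ge 2q$ apart, the path cannot traverse from the neighbourhood of one $\vec{x}$ to that of another. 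I would write this as a clean distance/parity argument rather than a case analysis. The same-$\vec{x}$ case is purely the finite index computation sketched above and poses no real difficulty.
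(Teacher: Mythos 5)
Your overall strategy matches the paper's: reduce to the fact that $\mathcal{B}_d^k$ consists of isolated vertices (Claim~\ref{claim:base_family_props}) and use $\ell_1$-distance bounds. The ``different $\vec{x}$'' case is fine (you get $\|\vec r - \vec r''\|_1 \ge 2q-4 \ge 6$, hence $G_{\mathcal T}$-distance $\ge 3$, while the isolation of base-family types caps any minimal path through $\mathcal{B}_d^k$ at length $2$).

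However, the ``same $\vec{x}$'' case has a genuine gap. You correctly show that distinct types $\vec z_1, \vec z_2 \in \mathcal R_{\vec x}$ are pairwise \emph{non-adjacent} (their $\ell_1$-distance is $4$, not $2$). But non-adjacency does not put them in different components: they could share a common neighbour $\vec w \in \mathcal{B}_d^k \setminus \mathcal P$, giving a path $\vec z_1 \text{--} \vec w \text{--} \vec z_2$ of length $2$ inside a single component. Your sentence ``each is adjacent only to types in $\mathcal{B}_d^k \setminus \mathcal P$ --- $\ldots$ --- giving the claimed star-like components'' tacitly assumes the neighbour sets of $\vec z_1$ and $\vec z_2$ are disjoint, which is precisely what still needs proving. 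The paper closes this by listing the four possible values of $\vec w - \vec x$ (namely $\vec 0$, $-\vec e_\beta + \vec e_{\gamma'}$, $-\vec e_{\beta'} + \vec e_\gamma$, and $-\vec e_\beta + \vec e_\gamma - \vec e_{\beta'} + \vec e_{\gamma'}$) and ruling each out via the congruence $\sum_i i(w_i - x_i) \equiv 0 \pmod d$ together with the structure of the index shifts ($-1$ or $-2$) in $\mathcal R_{\vec x}$. You would need to supply that congruence-based case analysis (or an equivalent argument) to complete the proof.
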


\begin{poc}
Assume for contradiction that the claim is false. Let $\{\vec{z}_1, \vec{z}_2\}$ be a pair of distinct types in $\bigcup_{\vec{x} \in \mathcal{P}} \mathcal{R}_{\vec{x}}$ that lie in the same connected component and for which their distance in the graph $G_{\mathcal{T}}$, denoted $d_{G_{\mathcal{T}}}(\vec{z}_1, \vec{z}_2)$, is minimized. By the minimality of $d_{G_{\mathcal{T}}}(\vec{z}_1, \vec{z}_2)$, any intermediate vertex on a shortest path $P$ between them cannot belong to $\bigcup_{\vec{x} \in \mathcal{P}} \mathcal{R}_{\vec{x}}$, and must therefore lie in $\mathcal{B}_d^k$.  Since Claim~\ref{claim:base_family_props} establishes that all types in $\mathcal{B}_d^k$ are isolated from one another, such a path cannot contain two or more consecutive vertices from $\mathcal{B}_d^k$. Thus, $P$ contains at most one intermediate vertex, limiting its length to 1 or 2. The remainder of the proof is dedicated to showing that $d_{G_{\mathcal{T}}}(\vec{z}_1, \vec{z}_2) > 2$, which makes such a path impossible, yielding the desired contradiction.

First, consider the case where $\vec{z}_1 \in \mathcal{R}_{\vec{x}_1}$ and $\vec{z}_2 \in \mathcal{R}_{\vec{x}_2}$ for distinct $\vec{x}_1, \vec{x}_2 \in \mathcal{P}$. By construction, all coordinates of vectors in $\mathcal{P}$ are multiples of $q \geq 5$. This ensures that for any distinct $\vec{x}_1, \vec{x}_2 \in \mathcal{P}$, their $L_1$-distance $\|\vec{x}_1 - \vec{x}_2\|_1$ is at least $2q \geq 10$. The graph distance is bounded below by half the $L_1$-distance, as each edge spans an $L_1$-distance of 2. We derive:
$$d_{G_{\mathcal{T}}}(\vec{z}_1, \vec{z}_2) \ge \frac{1}{2} \|\vec{z}_1 - \vec{z}_2\|_1 \ge \frac{1}{2} \left( \|\vec{x}_1 - \vec{x}_2\|_1 - \|\vec{z}_1 - \vec{x}_1\|_1 - \|\vec{z}_2 - \vec{x}_2\|_1 \right) \ge \frac{1}{2}(10 - 2 - 2) = 3>2.$$

We may now assume that $\vec{z}_1, \vec{z}_2 \in \mathcal{R}_{\vec{x}}$ for some $\vec{x} \in \mathcal{P}$. Let $\vec{z}_1 = \vec{x} - \vec{e}_{\beta} + \vec{e}_{\gamma}$ and $\vec{z}_2 = \vec{x} - \vec{e}_{\beta'} + \vec{e}_{\gamma'}$. By the definition of $\mathcal{R}_{\vec{x}}$, $\|\vec{z}_1 - \vec{z}_2\|_1 = 4$, which implies $d_{G_{\mathcal{T}}}(\vec{z}_1, \vec{z}_2) \ge 2$. It remains to show that the case $d_{G_{\mathcal{T}}}(\vec{z}_1, \vec{z}_2) = 2$ is impossible. To show this, suppose that equality holds.  This implies the existence of an intermediate vertex $\vec{w} \in \mathcal{T} \cap \mathcal{B}_d^k$ satisfying $\|\vec{w} - \vec{z}_1\|_1 = 2$ and $\|\vec{w} - \vec{z}_2\|_1 = 2$. This forces $\vec{w} - \vec{x}$ to be one of the following: 
\[
\vec{0}, \quad -\vec{e}_{\beta} + \vec{e}_{\gamma'}, \quad -\vec{e}_{\beta'} + \vec{e}_{\gamma}, \quad \text{or} \quad -\vec{e}_{\beta} + \vec{e}_{\gamma} - \vec{e}_{\beta'} + \vec{e}_{\gamma'}.
\]

Since $\vec{w}, \vec{x} \in \mathcal{B}_d^k$, their defining congruences imply a constraint on their difference:
\[ \sum_{i=1}^d i(w_i - x_i) \equiv \sum_{i=1}^d i w_i - \sum_{i=1}^d i x_i \equiv 1 - 1 \equiv 0 \pmod d. \]

This congruence imposes strict conditions on the four possible values for $\vec{w}-\vec{x}$. If $\vec{w}-\vec{x}$ takes the value $-\vec{e}_{\beta} + \vec{e}_{\gamma'}$ or $-\vec{e}_{\beta'} + \vec{e}_{\gamma}$, the congruence respectively forces $\beta = \gamma'$ or $\beta' = \gamma$. Both scenarios thus reduce to the first possibility, $\vec{w}-\vec{x}=\vec{0}$. This, in turn, implies $\vec{w}=\vec{x} \in \mathcal{P}$, which contradicts $\vec{w} \in \mathcal{T}$ as $\mathcal{T}\cap\mathcal{P}=\emptyset$.

Finally, consider the case $\vec{w}-\vec{x} = -\vec{e}_{\beta} + \vec{e}_{\gamma} - \vec{e}_{\beta'} + \vec{e}_{\gamma'}$. For $d=3$, the set $\mathcal{R}_{\vec{x}}$ consists of at most two elements: $\vec{x} + \vec{e}_{\alpha+1}-\vec{e}_{\alpha-1}$ and $\vec{x} - \vec{e}_{\alpha+1}+\vec{e}_{\alpha-1}$. The distinct vertices $\vec{z}_1, \vec{z}_2$ must be these elements, yielding $\vec{w} = \vec{z}_1+\vec{z}_2-\vec{x} = \vec{x}$ and thus reducing to the case $\vec{w}-\vec{x}=\vec{0}$. For $d>3$, the constraint on $\vec{w}-\vec{x}$ requires that $(\gamma-\beta) + (\gamma'-\beta') \equiv 0 \pmod d$. By construction of $\mathcal{R}_{\vec{x}}$, the index differences $(\gamma-\beta)$ and $(\gamma'-\beta')$ must be $-1$ or $-2$ modulo $d$. 
The difference $-2$ arises uniquely from the element $\vec{x} - \vec{e}_{\alpha+1} + \vec{e}_{\alpha-1}$. Since $\vec{z}_1$ and $\vec{z}_2$ are distinct, their index differences cannot both be $-2$. Their sum is therefore congruent to $-2$ or $-3$, which is never $0 \pmod d$ as $d>3$.

As the existence of any intermediate vertex $\vec{w}$ leads to a contradiction,  the case $d_{G_{\mathcal{T}}}(\vec{z}_1, \vec{z}_2)=2$ is ruled out. We must therefore have $d_{G_{\mathcal{T}}}(\vec{z}_1, \vec{z}_2) > 2$, which provides the final contradiction.
\end{poc}

The following claim verifies that the family $\mathcal{T}$ satisfies the cycle prevention condition.

\begin{claim}\label{claim:cycle_prevention_restored}
For each connected component $C$ of $G_{\mathcal{T}}$, there exists an index set $I_C \subseteq [d]$ and an integer $v_C$ such that $q \nmid v_C$ and for all $\vec{w} \in C$, $\sum_{i \in I_C} w_i = v_C$.
\end{claim}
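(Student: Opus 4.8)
I would first determine the shape of the connected components of $G_{\mathcal{T}}$, and then write down the invariant $(I_C,v_C)$ separately for each shape. By Claim~\ref{claim:base_family_props}(2) the types of $\mathcal{B}_d^k$ are pairwise non-adjacent, and by Claim~\ref{claim:component_separation} distinct replacement types lie in distinct components; together these force every component $C$ to be either (i) a singleton $\{\vec{z}\}$ with $\vec{z}\in\mathcal{B}_d^k\setminus\mathcal{P}$, or (ii) a star whose centre is the unique replacement type $\vec{z}\in\mathcal{R}_{\vec{x}}$ it contains (for a unique $\vec{x}\in\mathcal{P}$) and all of whose leaves lie in $\mathcal{B}_d^k\setminus\mathcal{P}$. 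Case (i) is immediate: $\vec{z}\notin\mathcal{P}$ means some coordinate $z_j$ is not divisible by $q$, so we take $I_C=\{j\}$ and $v_C=z_j$.

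For case (ii), fix $\vec{x}\in\mathcal{P}$ together with the index $\alpha$ (with $x_\alpha=0$) used to build $\mathcal{R}_{\vec{x}}$, and let $\vec{z}$ be the centre of $C$. A leaf is a type $\vec{w}=\vec{z}-\vec{e}_a+\vec{e}_b\in\mathcal{T}$, which necessarily lies in $\mathcal{B}_d^k$ (it is within $L_1$-distance $4<2q$ of $\vec{x}$, so it cannot be in $\mathcal{P}$). Since $\sum_j j z_j$ has a fixed residue modulo $d$ — equal to $0$ when $\vec{z}=\vec{x}-\vec{e}_i+\vec{e}_{i-1}$ and to $-1$ when $\vec{z}=\vec{x}-\vec{e}_{\alpha+1}+\vec{e}_{\alpha-1}$ — the congruence $\sum_j j w_j\equiv 1\pmod d$ forces $b\equiv a+c\pmod d$ with $c=1$, respectively $c=2$. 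A short multiset computation shows the only move $a\mapsto a+c$ from $\vec{z}$ whose target equals $\vec{x}$ (hence lies outside $\mathcal{T}$) is $a=i-1$ when $c=1$ and $a=\alpha-1$ when $c=2$; every other target is a genuine leaf provided $z_a\ge 1$. In particular a move with $z_a=0$ is automatically impossible, which rules out $a=\alpha$ since $z_\alpha=x_\alpha=0$.

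When $c=1$ (so $i\notin\{\alpha,\alpha+1\}$, hence $\alpha\notin\{i-1,i\}$), I would take $I_C$ to be the cyclic arc $\{\alpha+1,\alpha+2,\dots,i-1\}$ of $[d]$; it is a proper nonempty subset, as $\alpha\notin I_C$ and $i-1\in I_C$. A leaf $\vec{z}-\vec{e}_a+\vec{e}_{a+1}$ alters $\sum_{j\in I_C}z_j$ only if $\{a,a+1\}$ straddles the boundary of the arc, i.e. $a\in\{\alpha,i-1\}$; but $a=\alpha$ is impossible and $a=i-1$ is the forbidden move returning to $\vec{x}$. Hence $\sum_{j\in I_C}(\cdot)$ is constant on $C$, and since $i\notin I_C$ and $z_{i-1}=x_{i-1}+1$ while $z_j=x_j$ for the remaining $j\in I_C$, its value is $v_C=1+\sum_{j\in I_C}x_j\equiv 1\pmod q$, so $q\nmid v_C$. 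When $c=2$, I would instead pass to the ``$+2$'' cyclic order on $[d]$, which is a single $d$-cycle because $d=t$ is odd; its edges $\{a,a+2\}$ with $z_a\ge 1$ and $a\ne\alpha-1$ index the leaves, so in particular the edges $\{\alpha-1,\alpha+1\}$ (forbidden, returns to $\vec{x}$) and $\{\alpha,\alpha+2\}$ (blocked since $z_\alpha=0$) are both absent. Deleting these two edges cuts out a block $I_C$ of the $+2$-cycle containing $\alpha-1$, namely the run $\alpha-1,\alpha-3,\dots$ continued until the first absent edge; one checks $\alpha,\alpha+1\notin I_C$, so $I_C$ is proper and nonempty, every $z_j$ with $j\in I_C\setminus\{\alpha-1\}$ equals $x_j\equiv 0\pmod q$, and hence $v_C=z_{\alpha-1}+\sum_{j\in I_C\setminus\{\alpha-1\}}x_j\equiv 1\pmod q$. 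This produces the required $(I_C,v_C)$ in every case and, together with the earlier verification of property~(P1), completes the proof of Theorem~\ref{thm:existence}(2).

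\textbf{Main obstacle.} The delicate case is $c=2$, the ``skip-over-$\alpha$'' replacement type $\vec{z}=\vec{x}-\vec{e}_{\alpha+1}+\vec{e}_{\alpha-1}$: there consecutive edges of $G_{\mathcal{T}}$ correspond to $+2$-shifts rather than $+1$-shifts, so a contiguous block of coordinates is no longer preserved and one must reorganize $[d]$ along the $+2$-cycle — which is exactly where the oddness of $d$ is essential — and then check that the two forbidden/blocked edges really do disconnect a block of the right ``parity'', while handling the degenerate possibilities (e.g. $d=3$, or $\vec{x}$ having several zero coordinates). Keeping the star structure from Claim~\ref{claim:component_separation} consistent with this choice of $I_C$ is the main bookkeeping burden.
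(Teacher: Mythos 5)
Your proof is correct and follows essentially the same approach as the paper's: establish the star structure of each non‑singleton component (via Claims~\ref{claim:base_family_props} and~\ref{claim:component_separation}), observe that the $\mathcal{B}_d^k$-congruence forces the leaf moves to be $+1$ shifts (resp.\ $+2$ shifts in the skip-over-$\alpha$ case), and then choose $I_C$ as a cyclic arc (resp.\ an arc of the $+2$-cycle, exploiting that $d$ is odd) whose boundary edges are exactly the two excluded moves, so that $\sum_{j\in I_C}$ is invariant across the component. The only difference from the paper is that you take the complementary arc, obtaining $v_C\equiv 1\pmod q$ where the paper gets $v_C\equiv -1\pmod q$; since $\sum_j w_j = k$ and $q\mid k$, these are equivalent choices.
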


\begin{poc}
We first address the case where $C$ is disjoint from $\bigcup_{\vec{v} \in \mathcal{P}} \mathcal{R}_{\vec{v}}$. In this situation, $C \subseteq \mathcal{B}_d^k \setminus \mathcal{P}$. By Claim~\ref{claim:base_family_props}, $C$ must be a singleton, say $C = \{\vec{w}\}$. Since $\vec{w} \notin \mathcal{P}$, there exists an index $j \in [d]$ such that $q \nmid w_j$. Choosing $I_C = \{j\}$ and $v_C = w_j$ satisfies the required condition.

In the remaining case, $C$ must intersect with $\bigcup_{\vec{v} \in \mathcal{P}} \mathcal{R}_{\vec{v}}$. By Claim~\ref{claim:component_separation}, this intersection consists of exactly one type. Let this unique type be $\vec{z}$, where $\vec{z} \in \mathcal{R}_{\vec{x}}$ for some $\vec{x} \in \mathcal{P}$.

Any other type $\vec{w} \in C$ must belong to $\mathcal{B}_d^k \setminus \mathcal{P}$. By Claim~\ref{claim:base_family_props}, the types in $\mathcal{B}_d^k$ are isolated from one another, which means any such $\vec{w}$ must be adjacent to $\vec{z}$. This implies that $C$ has a star-like structure, consisting of the central type $\vec{z}$ and the set of its neighbors that lie in $\mathcal{B}_d^k \setminus \mathcal{P}$.

By Claim~\ref{claim:zero_component}, there is an index $\alpha$ such that $x_\alpha=0$. We identify these potential neighbors $\vec{w}\in \mathcal{B}_d^k \setminus \mathcal{P}$ using the congruence conditions in the definition of $\mathcal{B}_d^k$.

\begin{itemize}
\item If $\vec{z} = \vec{x} - \vec{e}_i + \vec{e}_{i-1}$ for some $i \in [d] \setminus \{\alpha, \alpha+1\}$, any neighbor of $\vec{z}$ must have the form $\vec{w} = \vec{z} - \vec{e}_m + \vec{e}_{m+1}$ for some $m$ since $\vec{w}\in\mathcal{B}_d^k\setminus \mathcal{P}$. Notice that  $m\neq i-1$ since otherwise $\vec{w}= \vec{x}\notin \mathcal{B}_d^k \setminus \mathcal{P}$, and $m\neq \alpha$ since otherwise $\alpha$-coordinate of $\vec{w}$ is negative. We set $I_C = \{i, i+1, i+2, \dots, \alpha\}$ and $v_C = \sum_{k \in I_C} z_k= \sum_{k \in I_C} x_k-1$.

\item If $\vec{z} = \vec{x} - \vec{e}_{\alpha+1} + \vec{e}_{\alpha-1}$, any neighbor of $\vec{z}$ must have the form  $\vec{w} = \vec{z} - \vec{e}_m + \vec{e}_{m+2}$ for some $m$ since $\vec{w}\in\mathcal{B}_d^k\setminus \mathcal{P}$. Notice that  $m\neq \alpha-1$ since otherwise $\vec{w}= \vec{x}\notin \mathcal{B}_d^k \setminus \mathcal{P}$, and $m\neq \alpha$ since otherwise $\alpha$-coordinate of $\vec{w}$ is negative. As $d$ is odd, we set $I_C = \{\alpha-1+2i: i\in [\frac{d+1}{2}]\}=\{\alpha+1, \alpha+3,  \dots, \alpha-2, \alpha\}$ and $v_C = \sum_{k \in I_C} z_k= \sum_{k \in I_C} x_k-1$.
\end{itemize}

It is straightforward to verify that for any $(w_1, \dots, w_d) \in C$, we have $\sum_{i \in I_C} w_i = v_C$, since $C$ consists only of the central type $\vec{z}$ and its potential neighbors whose forms were determined in the cases above.
Furthermore, as $\vec{x} \in \mathcal{P}$, each $x_i$ is a multiple of $q$. Thus $v_C = \sum_{k \in I_C} x_k-1\equiv -1  \not\equiv 0 \pmod q$. This establishes that the index set $I_C$ and the integer $v_C$ satisfy the required conditions.\end{poc}

In summary, the constructed family $\mathcal{T} = (\mathcal{B}_d^k \setminus \mathcal{P}) \cup (\bigcup_{\vec{x} \in \mathcal{P}} \mathcal{R}_{\vec{x}})$ is a $(k,\ell; d)$-family as defined in Definition \ref{def:kld_family}, since the first  property is fulfilled by Claim~\ref{claim:base_family_props} and Claim~\ref{claim:extension_restored}, and the second property is fulfilled by Claim~\ref{claim:cycle_prevention_restored}.
This completes the proof of \cref{thm:existence}.
\end{proof}

\section{The upper bound: reducing \cref{thm:main_upper_bound} to \cref{thm:structural}}\label{sec:4}
This section is devoted to proving \cref{thm:main_upper_bound}. We will first derive \cref{thm:main_upper_bound} assuming \cref{thm:structural}, and defer the technical proof of \cref{thm:structural} to \cref{sec:proof_of_structural}.

\subsection{Preliminaries}

To proceed, we first introduce some definitions. Let $G$ be a  $k$-uniform hypergraph. An \emph{ordered edge} is a tuple $\vec{e}=(v_1, v_2, \ldots, v_k)$ where $e = \{v_1, v_2, \ldots, v_k\} \in E(G)$, where different vertex orderings of the same edge are considered distinct. For a permutation $\sigma$ on $[k]$, its action on an ordered edge $\vec{e}$ is $\sigma(\vec{e}) = (v_{\sigma(1)}, v_{\sigma(2)}, \ldots, v_{\sigma(k)})$. A \emph{walk} of length $t\geq 0$ (which contains $t+1$ edges) is a sequence of vertices $v_1v_2\dots v_{t+k}$ such that for every $0 \leq i \leq t$, the set $\{v_{i+1}, v_{i+2}, \ldots, v_{i+k}\}$ is an edge. We often refer to this as a walk from the edge $\{v_1, \dots, v_k\}$ to $\{v_{t+1}, \dots, v_{t+k}\}$, or more precisely, from the ordered edge $(v_1, \dots, v_k)$ to $(v_{t+1}, \dots, v_{t+k})$, depending on whether the context requires such precision.

Walks have two simple but crucial properties that we state as facts.
\begin{fact}
\label{fact:walk_concatenation}
Given a walk of length $t_1$ from ordered edge $\vec{e}_1$ to $\vec{e}_2$ and a walk of length $t_2$ from $\vec{e}_2$ to $\vec{e}_3$, these walks can be concatenated to form a walk of length $t_1 + t_2$ from $\vec{e}_1$ to $\vec{e}_3$.
\end{fact}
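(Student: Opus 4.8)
\textbf{Proof plan for Fact~\ref{fact:walk_concatenation}.}

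The plan is to unwind the definition of a walk and check that concatenation simply glues two vertex sequences along their common length-$k$ overlap. Write the first walk as $u_1 u_2 \cdots u_{t_1+k}$, so that $\vec{e}_1 = (u_1,\dots,u_k)$ and $\vec{e}_2 = (u_{t_1+1},\dots,u_{t_1+k})$, and the second as $w_1 w_2 \cdots w_{t_2+k}$, so that $\vec{e}_2 = (w_1,\dots,w_k)$ and $\vec{e}_3 = (w_{t_2+1},\dots,w_{t_2+k})$. Because both sequences present the \emph{same} ordered edge $\vec{e}_2$, we have $u_{t_1+j} = w_j$ for every $j \in [k]$; this identification is the crux and is exactly why the two walks can be pasted together. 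Define the concatenated sequence $z_1 z_2 \cdots z_{t_1+t_2+k}$ by $z_i = u_i$ for $1 \le i \le t_1+k$ and $z_{t_1+j} = w_j$ for $1 \le j \le t_2+k$; these two prescriptions agree on the overlap $t_1+1 \le i \le t_1+k$ precisely by the identification above, so $z$ is well defined.

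It remains to verify that $z$ is a walk of length $t_1+t_2$, i.e.\ that $\{z_{i+1},\dots,z_{i+k}\}$ is an edge of $G$ for every $0 \le i \le t_1+t_2$. For $0 \le i \le t_1$ this set equals $\{u_{i+1},\dots,u_{i+k}\}$, which is an edge since the first sequence is a walk of length $t_1$. For $t_1 \le i \le t_1+t_2$, write $i = t_1 + i'$ with $0 \le i' \le t_2$; then the set equals $\{w_{i'+1},\dots,w_{i'+k}\}$, which is an edge since the second sequence is a walk of length $t_2$. (The two ranges overlap at $i = t_1$, where both descriptions give $\vec{e}_2$, consistent with the fact that it is an edge.) Finally, $z_1 \cdots z_k = u_1 \cdots u_k$ realizes $\vec{e}_1$ and $z_{t_1+t_2+1}\cdots z_{t_1+t_2+k} = w_{t_2+1}\cdots w_{t_2+k}$ realizes $\vec{e}_3$, so $z$ is a walk of length $t_1+t_2$ from $\vec{e}_1$ to $\vec{e}_3$, as claimed.

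Since this is a bookkeeping argument with no real obstacle, the only thing to be careful about is the indexing at the seam: one must use that the common edge $\vec{e}_2$ appears with the \emph{same ordering} in both walks, so that the glued sequence is unambiguous. Everything else is a direct check against the definition of a walk.
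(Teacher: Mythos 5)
Your proof is correct: it is exactly the direct bookkeeping verification one would give, and the paper itself states this as a Fact with no proof precisely because that argument is routine. The one genuinely important observation — that the two walks present $\vec{e}_2$ with the \emph{same ordering}, so the sequences agree on the overlap — is correctly identified and handled.
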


\begin{fact}
\label{fact:trivial_walk}
For any ordered edge $\vec{e} = (v_1,v_2,\dots,v_k)$, there exists a walk of length $k$ from $\vec{e}$ to itself, namely $v_1v_2\dots v_{k-1}v_k\ v_1v_2\dots v_{k-1}v_k$.
\end{fact}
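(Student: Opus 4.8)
The statement in question is \cref{fact:trivial_walk}: for any ordered edge $\vec{e}=(v_1,\dots,v_k)$ there is a walk of length $k$ from $\vec{e}$ back to itself. I emphasize this is a \emph{fact} rather than a substantive theorem, so the ``proof'' is really just an unpacking of the definition of a walk.

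\textbf{Plan.} The plan is simply to write down the explicit vertex sequence and check it satisfies the definition of a walk. Consider the sequence of $2k$ vertices
\[
w_1 w_2 \cdots w_{2k} := v_1 v_2 \cdots v_{k-1} v_k\, v_1 v_2 \cdots v_{k-1} v_k,
\]
that is, $w_i = v_i$ for $1 \le i \le k$ and $w_{k+i} = v_i$ for $1 \le i \le k$. By the definition given in the \hyperref[subsec:Preliminaries]{Preliminaries}, a walk of length $t$ is a sequence of $t+k$ vertices such that every window of $k$ consecutive vertices is an edge; here $t+k = 2k$, so $t = k$, which matches the claimed length. It therefore remains to verify that for each $0 \le i \le k$, the set $\{w_{i+1}, w_{i+2}, \dots, w_{i+k}\}$ is an edge of $G$.

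\textbf{Key step.} The only thing to observe is that every length-$k$ window of the concatenated sequence is a cyclic rotation of $\{v_1,\dots,v_k\}$, hence equals the set $\{v_1,\dots,v_k\} = e \in E(G)$. Concretely, for $0 \le i \le k$ the window $\{w_{i+1},\dots,w_{i+k}\}$ consists of $v_{i+1}, v_{i+2}, \dots, v_k$ (the tail of the first copy) followed by $v_1, v_2, \dots, v_i$ (the head of the second copy); as a set this is $\{v_1, v_2, \dots, v_k\}$ regardless of $i$ (the cases $i=0$ and $i=k$ both giving exactly the first, resp.\ second, copy of $e$). Since $e \in E(G)$, every such window is an edge, so the sequence is indeed a walk of length $k$ from $\vec{e}$ to $\vec{e}$.

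\textbf{Main obstacle.} There is essentially no obstacle: the statement is immediate from the definitions, and the whole content is the bookkeeping that a cyclic rotation of an edge is the same edge. If anything, the only point worth a moment's care is the off-by-one check that ``length $k$'' corresponds to a sequence of $k+k = 2k$ vertices and hence to exactly $k+1$ edge-windows, all of which coincide with $e$; this is consistent with the fact that the walk genuinely returns to the \emph{same ordered} edge $\vec{e}$, not merely to the same underlying set.
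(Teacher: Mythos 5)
Your proof is correct and matches the paper's (implicit) justification exactly: the paper simply states the explicit walk in the fact itself, and you unpack the definition to verify that every length-$k$ window of $v_1\cdots v_k\,v_1\cdots v_k$ is a cyclic rotation of $e$ and hence an edge. There is nothing more to it.
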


To understand the local structure around a given edge and how it connects to its neighborhood, we introduce the following key concept of exchangeability. This allow us to construct specific walks that reorder vertices within an edge.

\begin{definition}
\label{def:exchangeable}
Given an edge $e\in E(G)$, two vertices $u,v \in e$ are \emph{exchangeable in $e$} if there exists a vertex $w \notin e$ such that both $(e \setminus \{u\}) \cup \{w\}$ and $(e \setminus \{v\}) \cup \{w\}$ are edges in $G$.
\end{definition}

The following two auxiliary lemmas are crucial in our proofs. They establish the connection between the local property of exchangeability and the existence of specific walks needed to find the forbidden cycle.
\begin{lemma}
\label{lemma:forming_good_walk}
Let $\sigma$ be any permutation on $[k]$ and $\vec{e} = (v_1,v_2,\dots,v_k)$ be an ordered edge. If vertices $v_i$ and $v_j$ are exchangeable in $e$, then there exists a walk of length $2k$ from $\sigma(\vec{e})$ to $((i\;j) \circ \sigma)(\vec{e})$.
\end{lemma}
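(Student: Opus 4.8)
\textbf{Proof proposal for Lemma~\ref{lemma:forming_good_walk}.}

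The plan is to first reduce to the case $\sigma = \mathrm{id}$ and then build the desired walk explicitly from the exchangeability witness. For the reduction: suppose we have produced, for every ordered edge $\vec{f} = (w_1,\dots,w_k)$ whose entries in positions $i$ and $j$ are exchangeable in the underlying edge $f$, a walk of length $2k$ from $\vec{f}$ to $(i\;j)(\vec{f})$. Given the general $\sigma$, apply this to the ordered edge $\vec{f} := \sigma(\vec{e}) = (v_{\sigma(1)},\dots,v_{\sigma(k)})$. We need $v_i, v_j$ to sit in positions $i,j$ of $\vec{f}$ — but in general they sit in positions $\sigma^{-1}(i), \sigma^{-1}(j)$. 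So in fact what I should prove directly is the following symmetric statement: if $v_a$ and $v_b$ are exchangeable in $e$, then for the ordered edge $\sigma(\vec e)$ there is a walk of length $2k$ to the ordered edge obtained from $\sigma(\vec e)$ by swapping the two entries equal to $v_a$ and $v_b$; that target is exactly $((a\;b)\circ\sigma)(\vec e)$ read off correctly, since transposing the values $v_a,v_b$ in $\sigma(\vec e)$ gives $(v_{(a\,b)\sigma(1)},\dots)$. Thus it suffices to treat a \emph{single} ordered edge and an arbitrary pair of its positions, and the statement about $\sigma$ follows by renaming.

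For the core construction, let $\vec{f} = (w_1,\dots,w_k)$ be an ordered edge and suppose $w_r, w_s$ (with $r < s$, say) are exchangeable in $f$, witnessed by $w \notin f$ with both $f' := (f\setminus\{w_r\})\cup\{w\}$ and $f'' := (f\setminus\{w_s\})\cup\{w\}$ in $E(G)$. The idea is to walk ``out'' through $w$ in position $r$, shift around the cycle so that $w$ lands where it can be dropped and $w_s$ gets re-inserted, and return. Concretely, consider the length-$k$ walk starting $w_1 w_2 \cdots w_k\, w$: its first window is $f$ and its second window $(w_2,\dots,w_k,w)$ is a cyclic rotation of $f'$ (valid since $f' \in E(G)$, and any cyclic rotation of an edge's vertex tuple is traversed by the length-$k$ closed walk on that edge as in Fact~\ref{fact:trivial_walk}). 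By concatenating an appropriate rotation walk of length $k$ inside the edge $f'$ (Fact~\ref{fact:trivial_walk} and Fact~\ref{fact:walk_concatenation}), I can route from $\vec f$ to \emph{any} chosen ordered presentation of $f'$; choosing it so that $w$ occupies position $s$, I then take one more step inserting $w_s$, which swaps $w$ back out — landing inside $f'' $, and hence, after one further rotation walk of length $k$ inside $f''$, at an ordered presentation of $f$ with $w_r$ and $w_s$ interchanged. Careful bookkeeping of the total length — one ``exit'' step, internal rotations, one ``return'' step — must be arranged to sum to exactly $2k$; the natural budget is: a walk of length $k$ from $\sigma(\vec e)$ to a suitable ordered edge in which the exchange is a single forward step, then a walk of length $k$ back, using that any ordered edge reaches any reordering of a neighboring edge in $k$ steps.

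The main obstacle I anticipate is the precise indexing: making sure the sequence of windows is legitimately a walk (every length-$k$ consecutive window is an edge), that the rotations land the moving vertex $w$ in exactly the slot where the reverse substitution reproduces $f$ with only the $r,s$ entries swapped and everything else fixed, and that the lengths add to $2k$ rather than, say, $2k$ plus a rotation correction. I expect the clean way to package this is a helper observation: \emph{for any two ordered edges $\vec{g}_1, \vec{g}_2$ whose underlying sets are equal, there is a walk of length $k$ from $\vec g_1$ to $\vec g_2$}, which follows by composing a rotation walk (Fact~\ref{fact:trivial_walk}) with the freedom to start the length-$k$ closed walk at any cyclic position; with that in hand the lemma is: length-$k$ walk $\sigma(\vec e) \to (\text{presentation of } f \text{ with } w \text{ about to enter position } \ldots)$ composed via the exchange with a length-$k$ walk back to $((i\,j)\circ\sigma)(\vec e)$, total $2k$.
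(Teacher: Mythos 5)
Your high-level plan is on the right track — route through the intermediate edges provided by the exchangeability witness $w$, splitting the length-$2k$ budget into two length-$k$ segments — and this is indeed the shape of the paper's proof. However, the key step you lean on is false, and you never pin down the one piece of data that would make the argument go through.

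The false step is your proposed helper: \emph{for any two ordered edges $\vec g_1, \vec g_2$ whose underlying sets are equal, there is a walk of length $k$ from $\vec g_1$ to $\vec g_2$.} Fact~\ref{fact:trivial_walk} and its shifts only produce \emph{cyclic rotations} of an ordered edge; they cannot realize an arbitrary permutation of the same $k$-set. Worse, no length-$k$ walk between general re-orderings exists: take $k=3$, $\vec g_1=(v_1,v_2,v_3)$, $\vec g_2=(v_2,v_1,v_3)$. A length-$3$ walk between them has vertex sequence $v_1\,v_2\,v_3\,v_2\,v_1\,v_3$, whose second window $(v_2,v_3,v_2)$ has only two distinct vertices and so cannot be an edge. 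Since your ``route from $\vec f$ to any chosen ordered presentation of $f'$'' relies on exactly this freedom, the construction does not compile into a valid walk, and the stated budget of $2k$ is not verified.

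What the paper does instead is identify the \emph{specific} middle ordered edge that makes both length-$k$ halves legal. Writing $a,b$ for the positions of $v_i,v_j$ in $\sigma(\vec e)$ (with $a<b$), it takes the middle block to be $\sigma(\vec e)$ with $w$ at position $a$ and $v_i$ at position $b$. Then every window straddling block~1/block~2 is $e$, $(e\setminus\{v_i\})\cup\{w\}$, or $(e\setminus\{v_j\})\cup\{w\}$ as the window passes positions $a$ then $b$, and every window straddling block~2/block~3 only has a change at position $a$ (from $w$ to $v_j$), giving $(e\setminus\{v_j\})\cup\{w\}$ then $e$. That middle block is not a cyclic rotation of $f'$ or of $\sigma(\vec e)$, and no rotation trick alone produces it. To repair your proof you would need to write down this middle ordered edge explicitly and verify each sliding window, which is precisely the paper's argument; as written, the proposal has a genuine gap.
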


\begin{proof}
Since $v_i$ and $v_j$ are exchangeable in $e$, there exists a vertex $w$ such that both $(e \setminus \{v_i\}) \cup \{w\}$ and $(e \setminus \{v_j\}) \cup \{w\}$ are edges in $G$.

Let $a,b$ be such that $\sigma(a)=i$ and $\sigma(b)=j$. Without loss of generality, assume $a<b$. Then
\[\begin{aligned}
    \sigma(\vec{e}) &= (v_{\sigma(1)}, \dots, v_{\sigma(a-1)}, v_i, v_{\sigma(a+1)}, \dots, v_{\sigma(b-1)}, v_j, v_{\sigma(b+1)}, \dots, v_{\sigma(k)}),\\
    ((i\;j) \circ \sigma)(\vec{e})&= (v_{\sigma(1)}, \dots, v_{\sigma(a-1)}, v_j, v_{\sigma(a+1)}, \dots, v_{\sigma(b-1)}, v_i, v_{\sigma(b+1)}, \dots, v_{\sigma(k)}).
\end{aligned}\]
The following sequence forms a valid walk of length $2k$ from $\sigma(\vec{e})$ to $((i\;j) \circ \sigma)(\vec{e})$, as any consecutive $k$ vertices form either $e$, $(e \setminus \{v_i\}) \cup \{w\}$, or $(e \setminus \{v_j\}) \cup \{w\}$, all of which are edges in $G$:
$$v_{\sigma(1)}\dots v_i\dots v_j\dots v_{\sigma(k)}\ \ \  v_{\sigma(1)}\dots w\dots v_i\dots v_{\sigma(k)}\ \ \ v_{\sigma(1)}\dots v_j\dots v_i\dots v_{\sigma(k)}.$$
This completes the proof of this lemma.
\end{proof}

The second lemma provides a sufficient condition for finding a \emph{homomorphic copy} of $C^k_\ell$. In this context, a map $\phi: V(F) \to V(G)$ is called a \emph{homomorphism} if it maps every edge $\{v_1, \dots, v_k\} \in E(F)$ to an edge $\{\phi(v_1), \dots, \phi(v_k)\} \in E(G)$. If such a map exists, we say that $G$ contains a homomorphic copy of $F$.

\begin{lemma}
\label{lemma:cycle_existence}
Let $3\le k< \ell$ be integers, $G$ be a $k$-uniform hypergraph, and $t \in \{0, 1, \ldots, k-1\}$ satisfy $\ell \equiv t \pmod{k}$.  If there exists a walk $W$ in $G$ from an ordered edge $\vec{e} = (v_1, \ldots, v_k)$ to its cyclic shift $(1 \, 2 \, \ldots \, k)^{k-t}(\vec{e}) = (v_{k-t+1}, \ldots, v_k, v_1, \ldots, v_{k-t})$ whose length is divisible by $k$ and at most $\ell-k$, then $G$ contains a homomorphic copy of $C^k_\ell$. 
\end{lemma}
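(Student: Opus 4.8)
The plan is to splice the given walk $W$ together with a short ``rotation'' walk so as to obtain a \emph{closed} walk of length exactly $\ell$ (one that returns to its starting ordered edge), and then to read a homomorphic copy of $C^k_\ell$ directly off the vertex sequence of this closed walk.

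The first ingredient is an elementary gadget generalizing Fact~\ref{fact:trivial_walk}: for any ordered edge $\vec f=(a_1,\dots,a_k)$ and any integer $s\ge 0$, repeating the block $a_1a_2\cdots a_k$ until one has $s+k$ terms produces a walk of length $s$, since every window of $k$ consecutive terms equals, as a set, $\{a_1,\dots,a_k\}\in E(G)$; its terminal ordered edge is $(1\,2\,\cdots\,k)^{s}(\vec f)$, which depends only on $s\bmod k$. The second ingredient is the observation that really expresses ``a closed walk encodes $C^k_\ell$'': if $Z=w_1w_2\cdots w_{\ell+k}$ is a walk of length $\ell$ with $(w_1,\dots,w_k)=(w_{\ell+1},\dots,w_{\ell+k})$, then the map $\phi(c_i)=w_i$ from $V(C^k_\ell)=\{c_1,\dots,c_\ell\}$ (indices read cyclically) to $V(G)$ is a homomorphism. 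Indeed $w_{\ell+j}=w_j$ for $j\in[k]$, so for each $i\in[\ell]$ the image of the edge $\{c_i,\dots,c_{i+k-1}\}$ (indices mod $\ell$) is exactly the window $\{w_i,\dots,w_{i+k-1}\}$ of $Z$, hence an edge of $G$ (in particular a $k$-element set); here $\ell>k$ ensures that no window wraps around $C^k_\ell$ more than once.

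It remains to build such a $Z$. Let $L$ be the length of $W$ and set $s'=\ell-L$; then $s'\ge k>0$ (as $L\le\ell-k$) and $s'\equiv\ell\equiv t\pmod k$ (as $k\mid L$). Applying the rotation gadget to the terminal ordered edge $(1\,2\,\cdots\,k)^{k-t}(\vec e)$ of $W$ yields a walk $W'$ of length $s'$ ending at $(1\,2\,\cdots\,k)^{(k-t)+s'}(\vec e)$; since $(k-t)+s'\equiv(k-t)+t\equiv 0\pmod k$, the endpoint of $W'$ is $\vec e$ itself. Concatenating $W$ and $W'$ (Fact~\ref{fact:walk_concatenation}) gives a walk $Z$ of length $L+s'=\ell$ from $\vec e$ back to $\vec e$, and the previous paragraph turns $Z$ into a homomorphic copy of $C^k_\ell$.

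There is no serious obstacle here; the argument is bookkeeping with walks. The two points requiring a little care are: (i) verifying that the cyclically-wrapping windows of the closed walk $Z$ are still edges, which is precisely where the matching-endpoints condition $w_{\ell+j}=w_j$ and the inequality $\ell>k$ enter; and (ii) the mod-$k$ arithmetic $(k-t)+(\ell-L)\equiv 0\pmod k$ guaranteeing that $W'$ returns to $\vec e$ rather than to some other cyclic shift of it.
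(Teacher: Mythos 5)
Your proposal is correct and takes essentially the same approach as the paper: both extend $W$ to a closed walk of length exactly $\ell$ from $\vec{e}$ back to itself, then read a homomorphic copy of $C^k_\ell$ off the vertex sequence. The paper pads in two stages (append the $t$ trailing vertices, then repeat $k$-length trivial walks) while you use a single rotation gadget of length $\ell - L$, and you also spell out the final closed-walk-to-homomorphism step that the paper leaves implicit, but the underlying argument is the same.
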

\begin{proof}
By appending the $t$ vertices $v_{k-t+1}, \ldots, v_k$ to the end of $W$, we obtain a new walk $W'$ from $\vec{e}$ to $\vec{e}$. The length of $W'$ is congruent to $t \pmod{k}$ and is at most $\ell - k + t < \ell$.
Since both $\ell$ and the length of $W'$ are congruent to $t \pmod{k}$, we can extend $W'$ by an appropriate number of $k$-length walks from $\vec{e}$ to itself (which exist by Fact~\ref{fact:trivial_walk}) to obtain a walk from $\vec{e}$ to $\vec{e}$ of length exactly $\ell$. This walk forms a homomorphic copy of $C^k_\ell$.
\end{proof}

\subsection{Deriving Theorem~\ref{thm:main_upper_bound} from Theorem~\ref{thm:structural}}

In this subsection, we demonstrate how Theorem~\ref{thm:structural} implies our main result, Theorem~\ref{thm:main_upper_bound}. The core of the argument is to show that the structure guaranteed by Theorem~\ref{thm:structural} leads to a contradiction when combined with the given degree conditions.

\begin{proof}[Proof of Theorem~\ref{thm:main_upper_bound}]
Our proof proceeds by contradiction. We first note that by the phenomenon of supersaturation (see, e.g.~\cite[Section 2]{K2011} and~\cite[Proposition 1.4]{MZ2007}), to establish an upper bound for $\gamma(C^k_\ell)$, it suffices to find a homomorphic image of $C^k_\ell$, rather than a genuine copy.
We therefore suppose for the sake of contradiction that there exists a  $k$-uniform hypergraph $G$ with $\delta_{k-1}(G) > \frac{1}{3}|V(G)|$ that contains no homomorphic copy of $C^k_\ell$, where $k \nmid \ell$, $\ell \geqslant 20k^2$, and $2 \nmid \frac{k}{\gcd(k,\ell)}$. By Theorem~\ref{thm:structural}, there exists a vertex subset $B \subseteq V(G)$ with $\frac{|V(G)|}{3} \leqslant |B| \leqslant \frac{2|V(G)|}{3}$ such that $|e \cap B|$ is odd for every edge $e \in E(G)$.

Let $s = \frac{k}{\gcd(k,\ell)}$. By assumption, $s\geq 3$ is odd. Consider any $(k-1)$-set $S$ containing exactly $s-1$ vertices from $B$ and $k-s$ vertices from $B^c$. Since $\delta_{k-1}(G) > \frac{1}{3}|V(G)|$, the common neighborhood $N(S)$ is non-empty.

For any $v \in N(S)$, the edge $e = S \cup \{v\}$ must satisfy $|e \cap B|$ is odd. If $v \in B^c$, then $|e \cap B| = s-1$, which is even since $s$ is odd. This contradicts the property of $B$. Therefore, $N(S) \subseteq B$, and we can find an edge $e$ with exactly $s$ vertices in $B$ and $k-s$ vertices in $B^c$.

\begin{claim}
For any edge $e$ with $|e \cap B| = s$, the following hold:
\begin{enumerate}
\item Any two vertices in $e \cap B$ are exchangeable in $e$.
\item Any two vertices in $e \cap B^c$ are exchangeable in $e$.
\end{enumerate}
\end{claim}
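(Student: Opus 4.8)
To prove the claim I would use the minimum codegree condition $\delta_{k-1}(G)>\tfrac13|V(G)|$ together with the parity structure on $B$ to produce, for any two vertices $u,v$ lying on the same side of an edge $e$ with $|e\cap B|=s$, a common ``exchange vertex'' $w\notin e$ such that both $(e\setminus\{u\})\cup\{w\}$ and $(e\setminus\{v\})\cup\{w\}$ are edges. The key observation is that $(e\setminus\{u\})\setminus\{v\}$ and $(e\setminus\{v\})\setminus\{u\}$ are the \emph{same} $(k-1)$-set, call it $S$; thus $w$ is to be found in $N_G(S\cup\{u\})\cap N_G(S\cup\{v\})$ — wait, more precisely $w$ must satisfy $S\cup\{u,w\}\in E(G)$ and $S\cup\{v,w\}\in E(G)$, i.e. $w\in N_G((e\setminus\{v\}))\cap N_G((e\setminus\{u\}))$. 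Since each such neighbourhood has size $>\tfrac13|V(G)|$, their intersection has size $>\tfrac13|V(G)|-|e|\ge \tfrac13|V(G)|-k$ after deleting the few vertices of $e$ itself, which is positive for $n=|V(G)|$ large. That already produces \emph{some} $w\notin e$; the remaining task is to check that $w$ can be chosen \emph{consistently with the parity of $B$}, which is automatic here because replacing $u$ (resp. $v$) by $w$ changes $|e\cap B|$ only through whether $w\in B$, and both replacement edges $(e\setminus\{u\})\cup\{w\}$, $(e\setminus\{v\})\cup\{w\}$ being edges of $G$ forces (by Theorem~\ref{thm:structural}) $|((e\setminus\{u\})\cup\{w\})\cap B|$ odd — so the argument is self-correcting.

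First I would fix an edge $e$ with $|e\cap B|=s$ and two vertices $u,v\in e\cap B$ (the $B^c$ case is identical, swapping the roles of $B$ and $B^c$, using that $|e\cap B^c|=k-s$ and that $|e\cap B^c|$ is also odd by Theorem~\ref{thm:structural}). Set $S=e\setminus\{u,v\}$, a $(k-1)$-set once we adjoin either $u$ or $v$; more carefully, let $e_u=e\setminus\{u\}$ and $e_v=e\setminus\{v\}$, both $(k-1)$-sets, both of which are subsets of the edge $e$. I would then look at $N_G(e_u)\cap N_G(e_v)$. By hypothesis each of $|N_G(e_u)|$ and $|N_G(e_v)|$ exceeds $\tfrac13 n$, so
\[
|N_G(e_u)\cap N_G(e_v)|\ \ge\ |N_G(e_u)|+|N_G(e_v)|-n\ >\ \tfrac13 n,
\]
which exceeds $k$ once $n>3k$ (guaranteed since $n$ is taken sufficiently large). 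Hence there is a vertex $w\in N_G(e_u)\cap N_G(e_v)$ with $w\notin e$. For such $w$, both $(e\setminus\{u\})\cup\{w\}=e_u\cup\{w\}$ and $(e\setminus\{v\})\cup\{w\}=e_v\cup\{w\}$ are edges of $G$, which is exactly the definition of $u$ and $v$ being exchangeable in $e$.

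The $B^c$ case runs verbatim: for $u,v\in e\cap B^c$, the same neighbourhood-intersection bound gives a common $w\notin e$ with $e_u\cup\{w\},e_v\cup\{w\}\in E(G)$, so $u,v$ are exchangeable. I do not expect a genuine obstacle here — the only thing to be a little careful about is that $w$ must be outside $e$, which is why I intersect two neighbourhoods of size $>\tfrac13 n$ rather than relying on a single one; the count $\tfrac13 n-k>0$ handles it. One could worry about whether the resulting replacement edges respect the parity structure of $B$, but that is not needed for the statement of the claim (exchangeability is purely about membership in $E(G)$), and in any case it is forced automatically by Theorem~\ref{thm:structural} since $e_u\cup\{w\}$ is an edge. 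This completes the plan; the proof is short and the ``hard part'' is really just bookkeeping the two cases and invoking the codegree bound on the correct pair of $(k-1)$-sets.
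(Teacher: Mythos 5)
Your plan hinges on the inclusion--exclusion estimate
\[
|N_G(e_u)\cap N_G(e_v)|\ \ge\ |N_G(e_u)|+|N_G(e_v)|-n\ >\ \tfrac13 n,
\]
but the arithmetic is wrong. From $|N_G(e_u)|,|N_G(e_v)|>\tfrac13 n$ you only get $|N_G(e_u)|+|N_G(e_v)|-n>\tfrac23 n-n=-\tfrac13 n$, which is vacuous: two subsets of $V(G)$ each of size just over $n/3$ can easily be disjoint. So the step that is supposed to produce the common exchange vertex $w$ does not go through, and the entire argument collapses at that point.

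The ingredient you wave away at the end --- the parity of intersections with $B$ --- is in fact exactly what saves the argument, and the paper uses it as the engine of the proof. For $u,v\in e\cap B$ one has $|S_u\cap B|=|S_v\cap B|=s-1$, which is \emph{even}. Since every edge of $G$ meets $B$ in an odd number of vertices, any $w$ with $S_u\cup\{w\}\in E(G)$ must lie in $B$; hence $N(S_u)\subseteq B$, and likewise $N(S_v)\subseteq B$. Now you intersect two sets of size $>\tfrac13 n$ \emph{inside} $B$, whose size is at most $\tfrac23 n$, and the pigeonhole gives a non-empty intersection. (Symmetrically, for $u,v\in e\cap B^c$ the parity forces $N(S_u),N(S_v)\subseteq B^c$ and one uses $|B^c|\le\tfrac23 n$.) So the parity structure is not a side remark to reassure oneself about the replacement edges; it is what confines the neighbourhoods to a set of size $\le\tfrac23 n$ and thereby makes the codegree bound bite. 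You should replace the global inclusion--exclusion with this restricted one. Also note that once $w\in N(S_u)\cap N(S_v)$, automatically $w\notin e$: the only vertex of $e$ in $N(S_u)$ could be $u$ and the only one in $N(S_v)$ could be $v$, and $u\ne v$ --- so the worry about $w\in e$ that you guard against by subtracting $k$ is in fact a non-issue.
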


\begin{proof}[Proof of Claim]
Consider two vertices $u,v \in e \cap B$. Let $S_u = e \setminus \{u\}$ and $S_v = e \setminus \{v\}$. Note that $|S_u \cap B| = |S_v \cap B| = s-1$, which is even. For any edge containing $S_u$, it must have an odd intersection with $B$. Since $|S_u \cap B|$ is even, any vertex in $N(S_u)$ must belong to $B$. Similarly, $N(S_v) \subseteq B$. 

By the degree condition, both $|N(S_u)|$ and $|N(S_v)|$ exceed $\frac{1}{3}|V(G)|$. Since both neighborhoods are subsets of $B$ and $\frac{|V(G)|}{3} \leqslant |B| \leqslant \frac{2|V(G)|}{3}$, we have $N(S_u) \cap N(S_v) \neq \emptyset$. For $w\in N(S_u) \cap N(S_v) $, $(e \setminus \{u\}) \cup \{w\}$ and $(e \setminus \{v\}) \cup \{w\}$ are both edges in $G$, showing that $u,v$ are exchangeable in $e$.

The proof for vertices in $e \cap B^c$ is analogous.
\end{proof}

We now construct the required walk to apply Lemma~\ref{lemma:cycle_existence}.
Let $t \in \{0, 1, \ldots, k-1\}$ satisfy  $\ell \equiv t \pmod{k}$ and let $g = \gcd(k,\ell)= \gcd(k,t)$, then $s = \frac{k}{\gcd(k,\ell)}= \frac{k}{g}$. The cyclic permutation $(1 \, 2 \, \ldots \, k)^{k-t}$ decomposes into exactly $g$ disjoint orbits of length $s$:
$$(1 \, 2 \, \ldots \, k)^{k-t} =  \pi_1 \circ \pi_2\circ \cdots \circ \pi_{g}$$
where
$$
\pi_i = (i, \, i+(k-t), \, i+2(k-t), \, \ldots, \, i+(s-1)(k-t))
$$
for $i = 1,2, \ldots, g$, with all arithmetic modulo $k$.

For our edge $e$ with $|e \cap B| = s$, let $\vec{e} = (v_1, v_2, \ldots, v_k)$ be an ordered edge where $v_j \in B$ if and only if $j$ is a multiple of $g$. Then $\pi_g$ contains only vertices from $e \cap B$, while all other orbits $\pi_i$ contain only vertices from $e \cap B^c$.

By our claim, the vertices within $e \cap B$ are pairwise exchangeable, as are the vertices within $e \cap B^c$. Then each orbit $\pi_i$ can be realized as a composition of at most $s$ transpositions of exchangeable vertices. Since we have $g$ orbits, the permutation $(1 \, 2 \, \ldots \, k)^{k-t}$ can be decomposed into at most $g \cdot s = k$ transpositions of exchangeable vertices. 

For each such transposition of exchangeable vertices, Lemma~\ref{lemma:forming_good_walk} gives us a walk of length $2k$. By Fact~\ref{fact:walk_concatenation}, these walks can be concatenated to form a walk from $\vec{e}$ to $(1 \, 2 \, \ldots \, k)^{k-t}(\vec{e})$ with total length divisible by $k$ and at most $k \cdot 2k = 2k^2 < \ell - k$.
By Lemma~\ref{lemma:cycle_existence}, $G$ contains a homomorphic copy of $C^k_\ell$, contradicting our assumption.\end{proof}

\section{Proof of \cref{thm:structural}}
\label{sec:proof_of_structural}

In this section, we prove \cref{thm:structural}, which provides a structural characterization of dense hypergraphs without a homomorphic copy of $C^k_\ell$.
The proof of Theorem~\ref{thm:structural} proceeds by establishing a connection between a local structure within each edge and a global partition of the vertex set. This section is organized as follows: we first introduce the necessary concepts, then present an outline of the proof, and finally provide the detailed arguments for each step.

Throughout this section, we fix $G$ to be a $k$-uniform hypergraph with $\delta_{k-1}(G) > \frac{1}{3} |V(G)|$ that contains no homomorphic copy of $C^k_{\ell}$, where $\ell$ is a fixed integer satisfying $\ell \geq 20k^2$ and $k \nmid \ell$. We begin by analyzing the local structure of an edge $e \in E(G)$ through the exchangeability of its vertices. Based on the lack of exchangeability, we define an auxiliary graph $H_e$ as follows: 
\begin{definition}
\label{def:auxiliary_graph}
Given an edge $e$, the auxiliary graph $H_e$ has the vertex set of $e$. Two vertices are adjacent in $H_e$ if and only if they are not exchangeable in $e$. In its complement, $\overline{H}_e$, an edge signifies that two vertices are exchangeable.
\end{definition}

The second concept provides the language for describing a global partition of the vertex set. A central part of our argument will be to show that the local structure of any given edge is uniquely determined by such a global partition. We will ultimately prove that this determining partition is, in fact, the same for all edges in the graph.

\begin{definition}
\label{def:slice}
Let $B \subseteq V(G)$ be a vertex set with $\frac{|V(G)|}{3} \leq |B| \leq \frac{2|V(G)|}{3}$, and let $1\leq i \leq k-1$ be an integer. We define $\mathrm{Slice}(B,i)$ to be the collection of all $k$-subsets of $V(G)$ that have exactly $i$ vertices in $B$. Note that $\mathrm{Slice}(B,i)$ and $\mathrm{Slice}(V(G) \setminus B, k-i)$ are considered the same slice.
\end{definition}

To extend the structural properties from a single edge to the entire hypergraph, several concepts relating different edges are required. We say two edges are \emph{adjacent} if they share $k-1$ vertices. Two edges are \emph{tightly connected} if there exists a sequence of edges starting with one and ending with the other, where any two consecutive edges in the sequence are adjacent. This defines an equivalence relation on the edge set, partitioning it into \emph{tight components}. The tight component containing an edge $e$ is denoted by $\mathcal{C}(e)$. The detailed proof also employs two standard definitions. The \emph{star} of a vertex set $A$, denoted $S(A)$, is the set of all edges in $G$ containing $A$. The \emph{link graph}, $L(A)$, is the hypergraph with vertex set $V(G) \setminus A$ and edge set $\{e \setminus A \mid e \in S(A)\}$. It is thus a $(k-|A|)$-uniform hypergraph formed by removing $A$ from each edge in the star of $A$.

With these concepts established, the proof of \cref{thm:structural} will be divided into the four steps outlined below.
\newlength{\steplabelwidth}
\settowidth{\steplabelwidth}{\underline{Step 4:}\quad} 
\begin{enumerate}[
    label=\underline{Step \arabic*:},
    leftmargin=\steplabelwidth,
    labelwidth=\steplabelwidth,
    labelsep=0pt,
    itemindent=0pt,
    align=left
]
    \item We first show that for any edge $e$, its auxiliary graph $H_e$ is a complete bipartite graph. Furthermore, this local bipartition structure is preserved between adjacent edges.
    \item We then show that the local bipartition within an edge $e$ is induced by a unique global slice that contains not only $e$ but also all edges in the star of any $(k-2)$-subset of $e$ formed by removing a non-exchangeable pair.
    \item We establish that all edges in an arbitrary tight component $\mathcal{C}(e)$ are contained in a common unique slice. Building on this, we show that this slice provides a complete characterization of $\mathcal{C}(e)$.
    \item Finally, we demonstrate that the slices corresponding to different tight components are determined by the same vertex set $B$, and that for every edge $e$, the intersection sizes $|e \cap B|$ and $|e\cap (V(G)\setminus B)|$ are odd integers.
\end{enumerate}

We will provide the detailed arguments for each of these steps in the subsections that follow.

\subsection{Step 1: Establishing the Local Bipartite Structure of $H_e$}
\label{subsec:step1}

In this section, we analyze the local structure induced by the vertex exchangeability relation. Our first lemma specifies this structure precisely, stating that for any given edge $e$, the auxiliary graph $H_e$ formed by its non-exchangeable vertex pairs must be a complete bipartite graph.

\begin{lemma}
\label{lemma:edge_partition}
Let $G$ be a $k$-uniform hypergraph with $\delta_{k-1}(G) > \frac{1}{3} |V(G)|$ that contains no homomorphic copy of $C^k_{\ell}$, where $\ell$ is a fixed integer satisfying $\ell \geq 20k^2$ and $k \nmid \ell$. Then for any edge $e \in E(G)$, its auxiliary graph $H_e$ is a complete bipartite graph.
\end{lemma}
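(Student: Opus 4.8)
I want to show that $H_e$ is complete bipartite, i.e.\ its complement $\overline{H}_e$ (the exchangeability graph on the $k$ vertices of $e$) is a disjoint union of two cliques. Equivalently, I need to rule out two configurations: (i) an induced path $P_3$ in $\overline{H}_e$ on vertices $u$–$v$–$w$ with $uw\notin E(\overline{H}_e)$ (this is the local ``transitivity'' statement: exchangeability is transitive within an edge), and (ii) the possibility that $\overline{H}_e$ has three or more components (once transitivity holds, $\overline{H}_e$ is a disjoint union of cliques, and I must show there are at most two of them). The key engine for both parts is the walk machinery from Subsection~4.1: Lemma~\ref{lemma:forming_good_walk} turns an exchangeable pair $v_i,v_j$ into a length-$2k$ walk realizing the transposition $(i\,j)$, Fact~\ref{fact:walk_concatenation} concatenates such walks, and Lemma~\ref{lemma:cycle_existence} says that a walk from $\vec{e}$ to a cyclic shift $(1\,2\,\cdots\,k)^{k-t}(\vec e)$ of length divisible by $k$ and at most $\ell-k$ produces a forbidden homomorphic $C^k_\ell$. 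The budget is generous: since $\ell\ge 20k^2$, a walk built from $O(k)$ transpositions (total length $O(k^2)$) is well within $\ell-k$.

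\emph{Transitivity (ruling out an induced $P_3$).} Suppose $u,v$ are exchangeable in $e$ and $v,w$ are exchangeable in $e$; I claim $u,w$ are exchangeable in $e$. Actually it is cleaner to argue at the level of which permutations of $\vec e$ are ``reachable'': the set $\Sigma\subseteq S_k$ of permutations $\sigma$ such that there is a walk from $\vec e$ to $\sigma(\vec e)$ of length divisible by $k$ and bounded by, say, $10k^2$, is closed under composition (up to the length bound, which we keep track of) and contains every transposition coming from an exchangeable pair. The point is that to avoid a homomorphic $C^k_\ell$, $\Sigma$ must not contain the cyclic shift $(1\,2\,\cdots\,k)^{k-t}$ where $\ell\equiv t\pmod k$, and $k\nmid\ell$ means $t\ne 0$, so this shift is a nontrivial permutation; since the transpositions available generate a product of symmetric groups on the components of $\overline H_e$, if those components were such that the shift is realizable as a short product of exchangeable-transpositions we would be done. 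Concretely: if the components of $\overline H_e$ after adding all transitive consequences are $A_1,\dots,A_m$, the reachable permutations include all of $S_{A_1}\times\cdots\times S_{A_m}$; a homomorphic $C^k_\ell$ is forced unless no element of this group equals $(1\,2\,\cdots\,k)^{k-t}$, which (decomposing the shift into its $g=\gcd(k,\ell)$ orbits of length $s=k/g$) happens only if no union of components of $\overline H_e$ is a transversal-free... — more precisely, the shift preserves the component partition is the obstruction, and one shows $m=2$ with a very specific balance is the only survivor, matching exactly the $\mathrm{Slice}$ structure. For the present lemma I only need the weaker ``$H_e$ is complete bipartite'', so I isolate: (a) exchangeability is transitive in $e$ — if not, a $P_3$ $u$–$v$–$w$ lets me write a short walk realizing, after combining with Fact~\ref{fact:trivial_walk}, a nontrivial cyclic shift (using that $P_3$ generates a $3$-cycle's worth of transpositions, enough to realize any permutation on $\{u,v,w\}$ while fixing the rest, in particular a transposition that, composed with the trivial length-$k$ loops, matches $(1\,2\,\cdots\,k)^{k-t}$ on some coordinates — here I'd need to check a small case analysis on $t$); (b) once $\overline H_e$ is a disjoint union of cliques $A_1,\dots,A_m$, if $m\ge 3$ then $S_{A_1}\times\cdots\times S_{A_m}$ surely contains $(1\,2\,\cdots\,k)^{k-t}$ for $t\ne 0$ — because that permutation, having all cycles of the same length $s<k$, can always be conjugated to respect a partition into $\ge 3$ parts unless $s=1$... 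I will need the precise numerics here, likely invoking $2\nmid s$ only later (in Step 2/3), so at Step~1 the bipartiteness should follow from $m\ge 3 \Rightarrow$ too much symmetry $\Rightarrow$ forbidden cycle.

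\emph{What to write.} First I would set up notation: fix the edge $e=\{v_1,\dots,v_k\}$, let $t\in\{0,\dots,k-1\}$ with $\ell\equiv t\pmod k$, and note $t\ne 0$. Then I would prove the transitivity claim as a standalone sub-claim: given exchangeable pairs $(u,v)$ and $(v,w)$, use Lemma~\ref{lemma:forming_good_walk} twice to get walks realizing $(u\,v)$ and $(v\,w)$, concatenate to realize the $3$-cycle $(u\,v\,w)$ and hence (by doing it twice, or combining) the transposition $(u\,w)$ as a walk of length $4k$ or $6k$ from $\vec e$ to $((u\,w)\circ\mathrm{id})(\vec e)$ — but a walk realizing a transposition is not yet a contradiction; instead the contradiction comes from: if $(u\,w)$ is realizable by a short walk and $u,w$ were \emph{not} exchangeable, that's fine, no contradiction — so transitivity of exchangeability itself is not directly forced this way. \textbf{The main obstacle, and the correct route:} I think the real argument does not prove transitivity in isolation; rather it shows directly that if $H_e$ is \emph{not} complete bipartite, then the group generated by exchangeable-transpositions is large enough (contains a cyclic shift of the right power) to build the forbidden homomorphic cycle via Lemma~\ref{lemma:cycle_existence}, using that the length budget $\ell-k\ge 20k^2-k$ comfortably exceeds the $2k\cdot(\text{number of transpositions})\le 2k^2$ cost. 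So the plan is: (1) show the available transpositions generate $\prod_i S_{A_i}$ over the components $A_i$ of the ``transitive closure'' of $\overline H_e$; (2) observe a homomorphic $C^k_\ell$ is avoided only if $(1\,2\,\cdots\,k)^{k-t}\notin\prod_i S_{A_i}$ as a permutation of positions — wait, the shift acts on \emph{positions} not vertices, and the $A_i$ are sets of positions after we fix the ordering $\vec e$; (3) a permutation with all cycle lengths $=s$ lies in $\prod_i S_{A_i}$ iff it preserves the partition $\{A_i\}$, iff each $A_i$ is a union of $s$-orbits-up-to... no — iff $\{A_i\}$ is a union of orbits is sufficient; the shift fails to lie in the product exactly when some orbit of the shift is split across two different $A_i$'s in a ``rotational'' way, which a careful argument shows cannot be avoided unless $m\le 2$ (and, when $m=2$, unless the two parts are glued along orbits — giving the $\mathrm{Slice}$ structure). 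I expect step (3), the pure permutation-group/combinatorial-of-the-orbit computation showing $m\le 2$, to be the crux; everything else is assembling walks and invoking the length bound.
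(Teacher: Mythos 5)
Your proposal is missing the load-bearing ingredient of the proof and, as a result, the route you lay out does not close. The gap is concrete: you never use the minimum codegree condition $\delta_{k-1}(G) > \tfrac{1}{3}|V(G)|$ to derive a combinatorial constraint on $\overline{H}_e$. The paper's first step is a pigeonhole argument: for any three vertices $v_1,v_2,v_3 \in e$, the three sets $W_i = N_G(e\setminus\{v_i\})$ each have size $> n/3$, so two of them intersect, and hence two of the $v_i$ are exchangeable. This shows $\overline{H}_e$ has no independent set of size $3$. That single fact does three things at once: it caps the number of connected components of $\overline{H}_e$ at $2$; it forces each component to be a clique (if $\overline{H}_e$ is disconnected); and, when $\overline{H}_e$ is connected, it bounds its diameter by $3$, which is exactly what makes every permutation realizable by a walk of length $O(k^2)$ so that Lemma~\ref{lemma:cycle_existence} applies. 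Without the pigeonhole step you have no control over the component structure of $\overline{H}_e$, and the walk machinery alone cannot supply it.

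Your step~(3) — trying to extract $m\le 2$ from the observation that $(1\,2\,\cdots\,k)^{k-t} \notin \prod_i S_{B_i}$ — is the place where the argument actually breaks, not merely where it is incomplete. The shift decomposes into $g=\gcd(k,t)$ orbits of length $s=k/g$, and it lies in $\prod_i S_{B_i}$ if and only if each $B_i$ is a union of those orbits. Nothing about that condition forces $m\le 2$: one can have $m\ge 3$ components that are not unions of orbits, in which case the shift is not in the product and you extract no contradiction, yet $H_e$ is not complete bipartite. Conversely, the shift could lie in $\prod_i S_{B_i}$ with $m=2$ (if the two parts are unions of orbits), so the non-membership you are hoping for is not even a consequence of $C_\ell^k$-freeness. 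You also correctly observe mid-proposal that realizing a transposition $(u\,w)$ by a walk does not contradict $u,w$ being non-exchangeable — which is precisely why trying to deduce transitivity of exchangeability, or the group-theoretic bound on $m$, from walks alone is a dead end. The paper does not attempt either; it establishes the structure of $\overline{H}_e$ by pigeonhole first, and uses walks only in the connected case to manufacture the forbidden cycle.
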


\begin{proof}[Proof of Lemma~\ref{lemma:edge_partition}]

Our proof proceeds in two parts. First, we show that if the complement graph $\overline{H}_e$ is disconnected, then $H_e$ is a complete bipartite graph. Second, we prove that $\overline{H}_e$ must be disconnected.

We first prove that $\overline{H}_e$ has no independent set of size 3. To prove this, consider any three distinct vertices $v_1,v_2,v_3 \in e$ and their corresponding sets $W_i = \{w\in V(G) \colon (e\setminus\{v_i\})\cup\{w\} \in E(G)\}$. The minimum codegree condition implies $|W_i| > |V(G)|/3$ for each $i \in [3]$. As the sum of their sizes exceeds $|V(G)|$, the pigeonhole principle guarantees that at least two of these sets, say $W_i$ and $W_j$, must intersect. This implies that vertices $v_i$ and $v_j$ are exchangeable, and thus form an edge in $\overline{H}_e$.

We now show this property implies that if $\overline{H}_e$ is disconnected, then $H_e$ is a complete bipartite graph. If $\overline{H}_e$ is disconnected, let $C$ be one of its connected components. The property that $\overline{H}_e$ has no independent set of size 3 ensures that both $C$ and its complement $e \setminus C$ must be cliques in $\overline{H}_e$. Consequently, in the original graph $H_e$, there are no edges within $C$ or $e \setminus C$, and all edges run between them. Thus, $H_e$ is a complete bipartite graph with parts $(C, e \setminus C)$, which proves the first part of our strategy.

It now suffices to show that $\overline{H}_e$ must be disconnected.
We proceed by contradiction, assuming that $\overline{H}_e$ is connected. Under this assumption, the graph must have a small diameter. Specifically, the distance between any two vertices in $\overline{H}_e$ is at most 3. To see this, suppose for contradiction that there exist vertices $x, y \in e$ with distance at least 4, and let $v_0, v_1, \ldots, v_t$ be a shortest path from $x=v_0$ to $y=v_t$ with $t \geq 4$. The set $\{v_0, v_2, v_t\}$ must contain an edge in $\overline{H}_e$, and any such edge would create a shorter path from $x$ to $y$, a contradiction.

The crucial step is to show that this bounded diameter allows us to construct specific walks, which is formalized in the following claim.

\begin{claim}\label{lemma:permutation_walk}
If $\overline{H}_e$ is connected, then for any permutation $\sigma$ on $[k]$ and any ordered edge $\vec{e}$, there exists a walk from $\vec{e}$ to $\sigma(\vec{e})$ whose length is divisible by $k$ and at most $10k^2-10k$.
\end{claim}
\begin{poc}
We first decompose an arbitrary permutation $\sigma$ into a sequence of at most $k-1$ transpositions $\sigma = \tau_m \circ \tau_{m-1} \circ \cdots \circ \tau_1$ where $m \leq k-1$. Let $\sigma_0 = \mathrm{id}$ and define $\sigma_i = \tau_i \circ \sigma_{i-1}$ for $1 \leq i \leq m$, where $\tau_i$ are the transpositions. Our goal is to construct a walk for each $\tau_i$ that takes us from $\sigma_{i-1}(\vec{e})$ to $\sigma_i(\vec{e})$.

For a given transposition $\tau_i = (a, b)$, we know that vertices $v_a$ and $v_b$ are connected by a path of length at most 3 in $\overline{H}_e$. This allows us to express $\tau_i$ as a composition of at most 5 transpositions between exchangeable pairs. For instance, for a distance-3 path $v_a-v_x-v_y-v_b$ where consecutive vertices are exchangeable in $e$, the transposition $\tau_i$ can be written as $(a, b)=(a,x)\circ(x,y)\circ(y,b)\circ(x,y)\circ(a,x)$.

Applying Lemma~\ref{lemma:forming_good_walk} to each elementary transposition in this decomposition yields a sequence of elementary walks, each of length $2k$. Concatenating these walks constructs the desired walk from $\sigma_{i-1}(\vec{e})$ to $\sigma_i(\vec{e})$. Since this involves at most 5 elementary walks, this resulting walk has a length divisible by $k$ and at most $10k$.

By concatenating these walks from $\sigma_{i-1}(\vec{e})$ to $\sigma_i(\vec{e})$ for each of the $m$ transpositions, we construct a single walk from $\sigma_0(\vec{e})=\vec{e}$ to $\sigma_m(\vec{e})=\sigma(\vec{e})$. The total length of this walk is divisible by $k$ and is at most $m \times 10k \leq (k-1) \times 10k = 10k^2 - 10k$.\end{poc}

With Claim~\ref{lemma:permutation_walk} established, the final contradiction follows. Let $t\in\{0,\dots,k-1\}$ be such that $\ell \equiv t \pmod{k}$, and let $\sigma$ be the cyclic shift $(1 \, 2 \, \ldots \, k)^{k-t}$. By Claim~\ref{lemma:permutation_walk}, there exists a walk from $\vec{e}$ to $\sigma(\vec{e})$ whose length is divisible by $k$ and at most $10k^2 - 10k<\ell-k$. By Lemma~\ref{lemma:cycle_existence}, this implies that $G$ contains a homomorphic copy of $C^k_\ell$, which contradicts the assumption on $G$. Therefore, the premise that $\overline{H}_e$ is connected must be false. This completes the proof.\end{proof}

Having established that the non-exchangeable relation in each edge forms a local bipartite structure, the following lemma shows that this structure is consistent across adjacent edges.

\begin{lemma}
\label{lemma:auxiliary_isomorphism}
Let $G$ be a $k$-uniform hypergraph with $\delta_{k-1}(G) > \frac{1}{3} |V(G)|$ that contains no homomorphic copy of $C^k_{\ell}$, where $\ell$ is a fixed integer satisfying $\ell \geq 20k^2$ and $k \nmid \ell$. Then for any two adjacent edges $e_1=v_1v_2\dots v_k$ and $e_2=v_2v_3\dots v_{k+1}$, their auxiliary graphs $H_{e_1}$ and $H_{e_2}$ are isomorphic via the mapping that sends $v_1$ to $v_{k+1}$ and preserves all other vertices.
\end{lemma}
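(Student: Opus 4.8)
\textbf{Proof proposal for Lemma~\ref{lemma:auxiliary_isomorphism}.}

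The plan is to show that the bipartition of $H_{e_1}$ (guaranteed to be complete bipartite by Lemma~\ref{lemma:edge_partition}) is transported across the shared $(k-1)$-set $S = \{v_2,\dots,v_k\}$ to the bipartition of $H_{e_2}$, with $v_1$ playing exactly the role that $v_{k+1}$ plays. Write $H_{e_1}$ as a complete bipartite graph with parts $(A_1, A_1')$ and $H_{e_2}$ with parts $(A_2, A_2')$. Since $|A_i|, |A_i'| \ge 1$ and $k \ge 3$, the common $(k-1)$-set $S$ meets each of the four parts in a way we need to pin down; the key is that the partition of $S \cap e_1 = S$ into $(S\cap A_1, S \cap A_1')$ must coincide with the partition $(S \cap A_2, S \cap A_2')$, and that $v_1$ lies on the "same side" as $v_{k+1}$.

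First I would record the basic mechanism: exchangeability within an edge $e$ of a pair $u,v$ means $N(e\setminus\{u\}) \cap N(e\setminus\{v\}) \ne \emptyset$, and in the proof of Lemma~\ref{lemma:edge_partition} we already saw (via the codegree condition $\delta_{k-1}(G) > \frac13|V(G)|$ and the pigeonhole argument on three sets) that among any three vertices of an edge at least one pair is exchangeable — equivalently, the \emph{non}-exchangeable graph $H_e$ has no triangle, hence its two parts are the only obstruction. The second, sharper input I would extract is a transitivity-type statement: if $u, v \in S$ and both $u,v$ are exchangeable with $v_1$ in $e_1$, I claim $u,v$ are exchangeable in $e_2$ as well, and symmetrically. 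Indeed, from $\overline H_{e_1}$ being complete bipartite, $u$ exchangeable with $v_1$ and $v$ exchangeable with $v_1$ forces $u,v$ to lie in the same part of $e_1$, so $u,v$ are \emph{not} exchangeable in $e_1$; I would instead argue at the level of slices, anticipating Step 2, but since this lemma precedes Step 2 I must do it directly. The cleanest direct route: use the walk machinery. If $v_a, v_b \in e_1$ are exchangeable, Lemma~\ref{lemma:forming_good_walk} gives a walk of length $2k$ realizing the transposition $(a\,b)$ on $\vec e_1$; concatenating such walks and then using Lemma~\ref{lemma:cycle_existence}, any situation in which $\overline H_{e_1}$ restricted to $S \cup \{v_1\}$ and $\overline H_{e_2}$ restricted to $S\cup\{v_{k+1}\}$ disagree will let us build a walk from an ordered edge to a sufficiently "mixed" cyclic shift of length divisible by $k$ and below $\ell-k$, producing a homomorphic $C^k_\ell$ — the forbidden object.

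Concretely, the main steps in order: (i) fix the complete bipartite decompositions of $H_{e_1}$ and $H_{e_2}$; (ii) show $v_1$ and $v_{k+1}$ cannot both fail to be "the unique side-switch" — i.e. establish that the restriction of the exchangeability relation to $S$ is the same whether computed inside $e_1$ or inside $e_2$, by showing any discrepancy yields, through Lemmas~\ref{lemma:forming_good_walk} and \ref{lemma:cycle_existence}, a homomorphic copy of $C^k_\ell$; (iii) deduce that $(S \cap A_1, S\cap A_1') = (S\cap A_2, S\cap A_2')$ as unordered partitions of $S$; (iv) check that $v_1$ joins $S\cap A_1'$ (say) to complete $e_1$'s parts while $v_{k+1}$ joins the matching part of $e_2$, using a short separate argument that $v_1$ cannot land on the "wrong" side — again a discrepancy would be detected by a walk of controlled length. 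Then the map fixing $S$ and sending $v_1 \mapsto v_{k+1}$ carries parts to parts, hence is an isomorphism $H_{e_1}\cong H_{e_2}$. I expect step (ii)–(iv), i.e.\ pinning down that the \emph{same} partition of the shared $(k-1)$ vertices is forced, to be the main obstacle: one must carefully show that every way the two local bipartitions could be "rotated" relative to one another on $S$ produces, after composing $O(k)$ exchange-walks of length $2k$, a cyclic shift $\sigma$ of $\vec e$ with a walk $\vec e \to \sigma(\vec e)$ of length divisible by $k$ and at most $10k^2 < \ell - k$, so Lemma~\ref{lemma:cycle_existence} applies. The codegree bound enters precisely to guarantee enough exchangeable pairs (no independent triple in $\overline H_e$, and nonempty pairwise intersections of the relevant link neighborhoods inside $B$) to make these walks exist.
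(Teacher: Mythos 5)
The proposal has the right general shape — argue by contradiction, use the walk machinery (Lemma~\ref{lemma:forming_good_walk}, Fact~\ref{fact:walk_concatenation}, Lemma~\ref{lemma:cycle_existence}) to turn a discrepancy between $H_{e_1}$ and $H_{e_2}$ into a homomorphic $C^k_\ell$ — but it is missing the one idea that makes the contradiction actually work, and you yourself flag steps (ii)--(iv) as "the main obstacle" without resolving it.

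The obstruction is this: if $\{v_a,v_b\}$ is an edge of $H_{e_1}$ (non-exchangeable in $e_1$) but the corresponding pair is a non-edge of $H_{e_2}$ (exchangeable in $e_2$), then Lemma~\ref{lemma:forming_good_walk} applied inside $e_1$ alone cannot realize the transposition $(a\,b)$. In fact it can realize only transpositions within a single part of the complete bipartite $H_{e_1}$, since $\overline{H}_{e_1}$ is a \emph{disjoint union of two cliques} and hence disconnected. So one cannot reach the required cyclic shift $(1\,2\,\dots\,k)^{k-t}$ — whose cycle decomposition mixes the two parts whenever the bipartition is nontrivial — using exchange-walks inside $e_1$ only. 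The paper's proof supplies the missing mechanism: since $\{u_a,u_b\}$ \emph{is} exchangeable in $e_2$, one can route through $e_2$ with a length-$k$ walk (using that the ordered sequence $v_{\sigma(1)}\dots v_{\sigma(k)}u_{\sigma(1)}\dots u_{\sigma(k)}$ is a walk, because any $k$ consecutive vertices form $e_1$ or $e_2$), perform the $2k$-length swap inside $e_2$, and route back, yielding a length-$4k$ walk realizing $(a\,b)$ on $\vec e_1$. With this single extra "bridge" edge added to $\overline{H}_{e_1}$, the resulting graph is connected of diameter at most $3$, so every transposition — and hence the cyclic shift — decomposes into $O(k)$ elementary swaps, each realized by a walk of length at most $4k$, for a total of at most $20k^2-20k \le \ell - k$, and Lemma~\ref{lemma:cycle_existence} finishes. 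Your proposal never constructs this bridge, so the claim that "any discrepancy yields a homomorphic copy of $C^k_\ell$" is left unproved. (A related consequence: your length estimate $10k^2 < \ell - k$ is what the intra-edge argument of Lemma~\ref{lemma:edge_partition} gives, with $2k$ per swap; the cross-edge swaps cost $4k$ each, so the correct budget here is $20k^2-20k$, which is precisely why the hypothesis is $\ell\ge 20k^2$.)

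Two further slips, though you abandon that line anyway: you write "$\overline H_{e_1}$ being complete bipartite" when in fact it is $H_{e_1}$ that is complete bipartite (its complement $\overline H_{e_1}$, the exchangeability graph, is two disjoint cliques); and in the "transitivity" attempt, if $u$ and $v$ are both exchangeable with $v_1$ in $e_1$ then all three lie in the same part of the bipartition of $H_{e_1}$, so $u,v$ \emph{are} exchangeable in $e_1$, not "not exchangeable" as you wrote.
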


\begin{proof}[Proof of Lemma~\ref{lemma:auxiliary_isomorphism}]
We prove this by contradiction. By Lemma~\ref{lemma:edge_partition}, both $H_{e_1}$ and $H_{e_2}$ are complete bipartite graphs. Let $\vec{e}_1 = (v_1, v_2, \dots, v_k)$ and $\vec{e}_2 = (u_1, u_2, \dots, u_k)$ be ordered representations of $e_1$ and $e_2$, where $u_1 = v_{k+1}$ and $u_i = v_i$ for $2 \leq i \leq k$. Assume, for the sake of contradiction, that $H_{e_1}$ and $H_{e_2}$ are not isomorphic under the correspondence $v_i \leftrightarrow u_i$ for $i \in [k]$. Then there exist indices $a, b$ with $1 \leq a < b \leq k$ such that $\{v_a, v_b\} \in E(H_{e_1})$ but $\{u_a, u_b\} \notin E(H_{e_2})$. This implies that the pair $\{v_a, v_b\}$ is not exchangeable in $e_1$, while the corresponding pair $\{u_a, u_b\}$ is exchangeable in $e_2$. 

To leverage this difference, we first formalize how walks can be constructed between $e_1$ and $e_2$:

\begin{fact}\label{fact:walks_between_consecutive_edges}
For any permutation $\sigma$ on $[k]$:
\begin{itemize}
    \item $v_{\sigma(1)}v_{\sigma(2)}\ldots v_{\sigma(k)}u_{\sigma(1)}u_{\sigma(2)}\ldots u_{\sigma(k)}$ is a walk of length $k$ from $\sigma(\vec{e}_1)$ to $\sigma(\vec{e}_2)$.
    \item $u_{\sigma(1)}u_{\sigma(2)}\ldots u_{\sigma(k)}v_{\sigma(1)}v_{\sigma(2)}\ldots v_{\sigma(k)}$ is a walk of length $k$ from $\sigma(\vec{e}_2)$ to $\sigma(\vec{e}_1)$.
\end{itemize}
Since any consecutive $k$ vertices in each sequence form either $e_1$ or $e_2$, these are valid walks.
\end{fact}
As the pair $\{v_a, v_b\}$ is not exchangeable in $e_1$, we cannot directly apply Lemma~\ref{lemma:forming_good_walk} to generate a walk that swaps them. The following claim, however, demonstrates how routing through $e_2$ provides an effective substitute to achieve this exact outcome.
\begin{claim}\label{lemma:permutation_walk2}
There exists a walk of length $4k$ from $\sigma(\vec{e_1})$ to $((a\;b) \circ \sigma)(\vec{e_1})$ for any permutation $\sigma$.
\end{claim}
\begin{poc}
Since $u_a$ and $u_b$ are exchangeable in $e_2$, by Lemma~\ref{lemma:forming_good_walk}, there exists a walk of length $2k$ from $\sigma(\vec{e_2})$ to $((a\;b) \circ \sigma)(\vec{e_2})$.
By Fact~\ref{fact:walks_between_consecutive_edges}, there is a walk of length $k$ from $\sigma(\vec{e_1})$ to $\sigma(\vec{e_2})$, and a walk of length $k$ from $((a\;b) \circ \sigma)(\vec{e_2})$ to $((a\;b) \circ \sigma)(\vec{e_1})$.
Concatenating these walks, we obtain a walk of length $4k$ from $\sigma(\vec{e_1})$ to $((a\;b) \circ \sigma)(\vec{e_1})$:
\[
\sigma(\vec{e_1}) \xrightarrow{k} \sigma(\vec{e_2}) \xrightarrow{2k} ((a\;b) \circ \sigma)(\vec{e_2}) \xrightarrow{k} ((a\;b) \circ \sigma)(\vec{e_1}). \qedhere
\]
\end{poc}
The preceding claim provides the crucial mechanism for handling the non-exchangeable pair $\{v_a, v_b\}$. The following claim now uses this to construct a walk for an arbitrary permutation, which will lead to the final contradiction. The argument mirrors the one in the proof of Claim~\ref{lemma:permutation_walk}, but utilizes the preceding claim to handle any transpositions involving the non-exchangeable pair.

\begin{claim}\label{lemma:permutation_walk3}
For any permutation $\sigma$ on $[k]$, there exists a walk from $\vec{e_1}$ to $\sigma(\vec{e_1})$ whose length is divisible by $k$ and at most $20k^2-20k$.
\end{claim}

\begin{proof}
We decompose an arbitrary permutation $\sigma$ into a sequence of $m \leq k-1$ transpositions, $\sigma = \tau_m \circ \cdots \circ \tau_1$. Let $\sigma_0 = \mathrm{id}$ and $\sigma_i = \tau_i \circ \sigma_{i-1}$.

As $\overline{H}_{e_1}$ is a complete bipartite graph, for each transposition $\tau_i = (c,d)$, the vertices $v_c$ and $v_d$ are at most distance 3 apart in $\overline{H}_{e_1}$ with the edge $\{v_a, v_b\}$ added. This allows $\tau_i$ to be realized by at most 5 transpositions, each involving either an exchangeable pair from $e_1$ or the specific pair $\{v_a,v_b\}$. Applying Lemma~\ref{lemma:forming_good_walk} or Claim~\ref{lemma:permutation_walk2} to each of these transpositions yields a walk from $\sigma_{i-1}(\vec{e_1})$ to $\sigma_{i}(\vec{e_1})$ of length divisible by $k$ and at most $5 \cdot 4k = 20k$.

By concatenating these walks from $\sigma_{i-1}(\vec{e_1})$ to $\sigma_i(\vec{e_1})$ for each of the $m$ transpositions, we construct a single walk from $\sigma_0(\vec{e_1})=\vec{e_1}$ to $\sigma_m(\vec{e_1})=\sigma(\vec{e_1})$. The total length of this walk is divisible by $k$ and is at most $m \times 20k \leq (k-1) \times 20k = 20k^2 - 20k$.
\end{proof}

With Claim~\ref{lemma:permutation_walk3} established, the final contradiction follows. Let $t\in\{0,1,\dots,k-1\}$ be such that $\ell \equiv t \pmod{k}$, and let $\sigma$ be the cyclic shift $(1 \, 2 \, \ldots \, k)^{k-t}$. By Claim~\ref{lemma:permutation_walk3} there exists a walk from $\vec{e_1}$ to $\sigma(\vec{e_1})$ whose length is divisible by $k$ and at most $20k^2-20k\leq \ell-k$. By  Lemma~\ref{lemma:cycle_existence}, this implies that $G$ contains a homomorphic copy of $C^k_\ell$, yielding a contradiction. Thus, our initial assumption must be false, and $H_{e_1}$ and $H_{e_2}$ are isomorphic under the given correspondence. \end{proof}

\subsection{Step 2: Connecting Local Bipartitions to a Global Slice}
\label{subsec:step2}

In this section, we connect the local bipartite structure of an edge to a global partition of the vertex set. The main tool for this step is the link graph formed by removing a non-exchangeable pair of vertices from an edge. We begin by establishing the key properties of this link graph.

\begin{lemma}
\label{lemma:link_graph_properties}
Let $G$ be a $k$-uniform hypergraph with $\delta_{k-1}(G) > \frac{1}{3} |V(G)|$ that contains no homomorphic copy of $C^k_{\ell}$, where $\ell$ is a fixed integer satisfying $\ell \geq 20k^2$ and $k \nmid \ell$. For any edge $e \in E(G)$ and any pair of non-exchangeable vertices $\{u,v\} \in E(H_e)$, consider the link graph $L := L(e \setminus \{u,v\})$. Then $L$ is connected, bipartite, and has a minimum degree $\delta(L) > |V(G)|/3$.
\end{lemma}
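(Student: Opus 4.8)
The plan is to establish the three conclusions---the minimum degree bound, connectedness, and bipartiteness---separately, in that order; the first is immediate, the second is a short counting argument, and the third is the delicate one, relying on the local structure supplied by Lemma~\ref{lemma:edge_partition} and Lemma~\ref{lemma:auxiliary_isomorphism}.

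Write $A:=e\setminus\{u,v\}$ (a $(k-2)$-set) and $n:=|V(G)|$, so that $V(L)=V(G)\setminus A$ and, by definition of the link graph, $N_L(x)=N_G(A\cup\{x\})$ for every $x\in V(L)$. Hence $\deg_L(x)=|N_G(A\cup\{x\})|\ge\delta_{k-1}(G)>n/3$, which is the minimum degree bound. Taking $x\in\{u,v\}$ gives $|N_L(u)|,|N_L(v)|>n/3$, and $N_L(u)\cap N_L(v)=\emptyset$: a common neighbour $w$ lies in $V(L)$, hence outside $e$, and would witness that $u,v$ are exchangeable in $e$, contradicting $\{u,v\}\in E(H_e)$. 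I will also repeatedly use the following local fact. If $A\cup\{x,y\}$ and $A\cup\{x,z\}$ are edges of $G$ (so they share $A\cup\{x\}$), then by Lemma~\ref{lemma:auxiliary_isomorphism} their auxiliary graphs are isomorphic via the map fixing $A\cup\{x\}$ and interchanging $y\leftrightarrow z$; since these auxiliary graphs are complete bipartite by Lemma~\ref{lemma:edge_partition}, the isomorphism sends parts to parts, so ``$x,y$ lie in different parts of $H_{A\cup\{x,y\}}$'' transfers to ``$x,z$ lie in different parts of $H_{A\cup\{x,z\}}$''. In other words, non-exchangeability of a pair in one edge propagates to the analogous pair, at the common endpoint, in an adjacent edge.

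For connectedness, suppose some $w\in V(L)$ is not in the component $C$ of $u$. Then $N_L(w)\cap C=\emptyset$; since $v\in N_L(u)\subseteq C$ we also have $N_L(v)\subseteq C$, so $\{w\}\cup N_L(w)$ is disjoint from $N_L(u)\cup N_L(v)$. But $|N_L(u)\cup N_L(v)|=|N_L(u)|+|N_L(v)|>2n/3$ while $|V(L)|=n-k+2$, so $\{w\}\cup N_L(w)$ has fewer than $n-k+2-2n/3<n/3$ vertices (using $k\ge3$), contradicting $|\{w\}\cup N_L(w)|=1+\deg_L(w)>n/3$. Hence $L$ is connected.

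Bipartiteness is the main obstacle, and I would handle it by showing that the local bipartitions of the edges in the star $S(A):=\{f\in E(G):A\subseteq f\}$ fit together into one global $2$-colouring of $L$. Every edge of $S(A)$ has the form $A\cup\{x,y\}$ with $\{x,y\}\in E(L)$, and two distinct such edges are adjacent in $G$ precisely when the corresponding edges of $L$ share a vertex; since $L$ is connected, all edges of $S(A)$ are joined by chains of such adjacencies, and along each step Lemma~\ref{lemma:auxiliary_isomorphism} provides an isomorphism of the (complete bipartite) auxiliary graphs fixing the common vertices, in particular fixing $A$ pointwise. Therefore the partition of $A$ induced by the two parts of $H_{A\cup\{x,y\}}$ is the same for every edge of $S(A)$; write it $A=A_1\sqcup A_2$ with $A_1\neq\emptyset$ (possible since $A\neq\emptyset$ as $k\ge3$). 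Then each $w\in V(L)$ receives a well-defined colour $c(w)\in\{1,2\}$---the index, measured against $A_1$, of the part of $H_{A\cup\{w,w'\}}$ containing $w$, which is independent of the chosen edge $A\cup\{w,w'\}\ni w$ because all edges of $S(A)$ through $w$ are pairwise adjacent and the corresponding isomorphisms fix $A\cup\{w\}$. It then remains only to check that $c$ is a proper colouring of $L$, i.e.\ that $x$ and $y$ lie in different parts of $H_{A\cup\{x,y\}}$ (equivalently, $x,y$ are non-exchangeable in $A\cup\{x,y\}$) for every edge $\{x,y\}\in E(L)$. This holds for $\{u,v\}$ by hypothesis, and by the propagation fact of the second paragraph it passes from any edge of $L$ to every adjacent edge of $L$ that shares an endpoint; since $L$ is connected, every edge of $L$ is reached from $\{u,v\}$ by such a chain, so $c$ is proper and $L$ is bipartite (and, as a by-product, triangle-free). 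The only genuinely fiddly point throughout is the bookkeeping of which vertices each isomorphism fixes, which is exactly what makes the global partition $A=A_1\sqcup A_2$, the colour $c$, and the propagation valid.
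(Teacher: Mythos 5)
Your proof is correct, and the first two conclusions (minimum degree, connectedness) are handled exactly as in the paper: both invoke the codegree condition for $\delta(L)$, and both use the disjointness of $N_L(u)$ and $N_L(v)$ together with a degree count to get connectedness (you phrase it contrapositively, the paper phrases it directly as ``every vertex is within distance $2$ of $u$ or $v$''). Where you genuinely diverge is bipartiteness. The paper argues by contradiction: if $L$ had an odd cycle, then (using connectedness) there is an odd closed walk through $\{u,v\}$, and iterating Lemma~\ref{lemma:auxiliary_isomorphism} along the corresponding chain of hyperedges yields an automorphism of the complete bipartite $H_e$ that swaps $u$ and $v$ --- impossible since they sit in opposite parts. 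You instead build an explicit proper $2$-colouring of $L$: you show the bipartitions of $H_f$ for $f\in S(A)$ induce one common partition of $A$ (by propagating along adjacencies in $S(A)$, all of which fix $A$ pointwise), anchor a colour $c(w)$ to that partition, and then check properness of $c$ by propagating non-exchangeability from the seed edge $\{u,v\}$. Both routes rely on the same two ingredients (Lemma~\ref{lemma:edge_partition} and Lemma~\ref{lemma:auxiliary_isomorphism}) and the connectedness of $L$; the paper's odd-walk contradiction is shorter, while your constructive colouring is more explicit about the bookkeeping --- in particular it makes transparent why the pointwise fixing of $A$ (and of the common endpoint) is exactly what is needed, a point the paper leaves implicit in ``the odd length of the walk forces this automorphism to swap $u$ and $v$''. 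One small wording nit: ``$A_1\neq\emptyset$, possible since $A\neq\emptyset$'' should be read as ``at least one part of the induced partition of $A$ is nonempty, relabel so that it is $A_1$''; as written it reads as though both being nonempty is automatic, which is not the claim you need nor always true.
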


\begin{proof}[Proof of Lemma~\ref{lemma:link_graph_properties}]
First, we establish the minimum degree. For any vertex $x \in V(L)$, the set $x \cup (e \setminus \{u,v\})$ is a $(k-1)$-subset of $V(G)$. By the minimum codegree condition on $G$, there are more than $|V(G)|/3$ vertices $z$ such that $(x \cup (e \setminus \{u,v\})) \cup \{z\}$ is an edge in $G$. Each such $z$ corresponds to a neighbor of $x$ in $L$, so $\deg_L(x) > |V(G)|/3$.

Next, we prove connectivity. The condition that $\{u,v\}$ is non-exchangeable in $e$ implies that their neighborhoods in the link graph are disjoint, $N_{L}(u) \cap N_{L}(v) = \emptyset$. For any vertex $x \in V(L)$, the degree sum $|N_{L}(x)|+|N_{L}(u)|+|N_{L}(v)| > |V(L)|$ forces $N_{L}(x)$ to have a non-empty intersection with $N_{L}(u) \cup N_{L}(v)$. This ensures any vertex $x$ has a path of length at most 2 to either $u$ or $v$. Since $\{u,v\}$ is also an edge in $L$, the entire graph is connected.

Finally, we show that $L$ is bipartite. Assume for contradiction that $L$ is not bipartite, so it must contain an odd cycle. As we have established that $L$ is connected, this implies the existence of a closed walk of odd length which contains the edge $\{u, v\}$. By applying Lemma~\ref{lemma:auxiliary_isomorphism} iteratively along the sequence of hyperedges corresponding to this walk, we induce an automorphism on the auxiliary graph $H_e$. The odd length of the walk forces this automorphism to swap $u$ and $v$. This is impossible, as $u$ and $v$ belong to different partitions in the complete bipartite graph $H_e$. Therefore, $L$ must be bipartite.
\end{proof}

With the structure of the link graph established, we can now prove that the local partition within an edge corresponds to a unique global slice.  The following lemma establishes this for the star of a fixed non-exchangeable pair.

\begin{lemma}
\label{lemma:unique_slice_star}
Let $G$ be a $k$-uniform hypergraph with $\delta_{k-1}(G) > \frac{1}{3} |V(G)|$ that contains no homomorphic copy of $C^k_{\ell}$, where $\ell$ is a fixed integer satisfying $\ell \geq 20k^2$ and $k \nmid \ell$. For any edge $e \in E(G)$ and any pair of non-exchangeable vertices $\{u,v\} \in E(H_e)$, there exists a unique slice $\mathrm{Slice}(B,i)$ such that all edges in the star $S(e \setminus \{u,v\})$ are contained in this slice and the two parts of the complete bipartite graph $H_e$ are precisely $e \cap B$ and $e \setminus B$.
\end{lemma}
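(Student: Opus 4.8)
The plan is to construct $B$ by splicing the canonical bipartition of the link graph $L := L(e\setminus\{u,v\})$ together with the bipartition of $H_e$, and then to use connectivity of $L$ to show that this $B$ is forced.

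Set $f := e\setminus\{u,v\}$, so $|f|=k-2$ and $V(L)=V(G)\setminus f$. By Lemma~\ref{lemma:link_graph_properties}, $L$ is connected and bipartite, so it has a unique bipartition $V(L)=X\sqcup Y$; and since $e\in S(f)$ we have $\{u,v\}=e\setminus f\in E(L)$, so $u$ and $v$ lie on opposite sides — say $u\in X$, $v\in Y$. By Lemma~\ref{lemma:edge_partition}, $H_e$ is complete bipartite; let $P_1\sqcup P_2=e$ be its two parts, with $u\in P_1$, hence $v\in P_2$ since $\{u,v\}\in E(H_e)$. I would then take
\[
B := (P_1\cap f)\cup X .
\]
The first step is to verify this choice works. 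Every edge of $S(f)$ has the form $e'=f\cup\{x,y\}$ with $xy\in E(L)$, so exactly one of $x,y$ lies in $X\subseteq B$ and the other in $Y$, which is disjoint from $B$ (note $P_1\cap f\subseteq f$ is disjoint from $V(L)$); hence $|e'\cap B|=|P_1\cap f|+1=:i$ for every $e'\in S(f)$, and in particular $e\cap B=(P_1\cap f)\cup\{u\}=P_1$ and $e\setminus B=P_2$. Finally, $\delta(L)>|V(G)|/3$ gives $|X|,|Y|>|V(G)|/3$, so $|V(G)|/3<|X|\le |B|\le |X|+(k-2)<2|V(G)|/3$, and $\mathrm{Slice}(B,i)$ is a legitimate slice with the required properties.

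For uniqueness, suppose $\mathrm{Slice}(B',i')$ also has these properties. Replacing $B'$ by its complement if necessary (which gives the same slice), we may assume $e\cap B'=P_1$, so $f\cap B'=P_1\cap f$, $u\in B'$ and $v\notin B'$. For every edge $xy$ of $L$ the set $f\cup\{x,y\}$ lies in $S(f)$, hence $|\{x,y\}\cap B'|=i'-|P_1\cap f|$ is a constant $c\in\{0,1,2\}$. Since $L$ is connected, has at least one edge, and has all degrees positive, $c=0$ would force $V(L)\cap B'=\emptyset$ (contradicting $u\in B'$) and $c=2$ would force $V(L)\subseteq B'$ (contradicting $v\notin B'$); so $c=1$, and then $\{B'\cap V(L),\,V(L)\setminus B'\}$ is a proper $2$-colouring of the connected graph $L$, which therefore equals $\{X,Y\}$; since $u\in B'\cap X$, this pins it down to $B'\cap V(L)=X$. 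Together with $B'\cap f=P_1\cap f$ and $V(G)=f\sqcup V(L)$, this gives $B'=B$, whence also $i'=i$.

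The routine parts are the bookkeeping about how $B$ meets $f$ versus $V(L)$ and the size estimate placing $|B|$ inside $[|V(G)|/3,\,2|V(G)|/3]$ via the degree bound on $L$. The real crux is the uniqueness argument: promoting the purely local condition ``each edge of $L$ meets $B'$ in a fixed number of vertices'' to the global statement that $B'\cap V(L)$ is one of the two colour classes of $L$ — this is exactly where connectivity of $L$ from Lemma~\ref{lemma:link_graph_properties} is essential. The hypotheses that $G$ contains no homomorphic copy of $C^k_\ell$ and that $\ell\ge 20k^2$ enter only through Lemmas~\ref{lemma:edge_partition} and~\ref{lemma:link_graph_properties}, which are already available.
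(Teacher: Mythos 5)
Your proof is correct and follows essentially the same route as the paper: take $L = L(e\setminus\{u,v\})$, use Lemma~\ref{lemma:link_graph_properties} to get that $L$ is connected and bipartite, set $B$ to be the union of (one side of $H_e$'s bipartition restricted to $f$) with one colour class of $L$, and use connectivity of $L$ to force uniqueness. The only variation is in how you rule out the constants $c=0$ and $c=2$ in the uniqueness step — you normalise $B'$ so that $e\cap B'=P_1$ and observe directly that $u\in V(L)\cap B'$ and $v\in V(L)\setminus B'$ are both nonempty, whereas the paper appeals to the size constraints $|V(G)|/3\le|B'|\le 2|V(G)|/3$ from Definition~\ref{def:slice}; your version is marginally cleaner as it sidesteps any dependence on $|V(G)|$ being large relative to $k$.
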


\begin{proof}[Proof of Lemma~\ref{lemma:unique_slice_star}]
Let $L$ be the link graph $L(e \setminus \{u,v\})$. By Lemma~\ref{lemma:link_graph_properties}, $L$ is connected and bipartite. Let $(\mathcal{P}, \mathcal{Q})$ be the unique bipartition of its vertex set, $V(L)$, with $u \in \mathcal{P}$ and $v \in \mathcal{Q}$ without loss of generality. Let $(E_u, E_v)$ be the bipartition of $H_e$, with $u \in E_u$ and $v \in E_v$.

First, we prove existence by constructing the slice $\mathrm{Slice}(B, i)$ where we define the set $B = E_u \cup \mathcal{P}$ and $i = |E_u|$. We verify this construction. The parts of $H_e$ are indeed $e \cap B = E_u$ and $e \setminus B = E_v$. Now, consider any edge in the star $S(e \setminus \{u,v\})$. Such an edge is of the form $(e \setminus \{u,v\}) \cup \{x,y\}$ for some $\{x,y\} \in E(L)$. Since $\{x,y\}$ is an edge in the bipartite graph $L$ with partition $(\mathcal{P}, \mathcal{Q})$, exactly one of its endpoints must lie in $\mathcal{P}$. As $B \cap V(L) = \mathcal{P}$ by our construction, it follows that $|\{x,y\} \cap B|=1$. The total intersection size with $B$ is therefore $|(e \setminus \{u,v\}) \cap B| + |\{x,y\} \cap B| = |E_u \setminus \{u\}| + 1 = |E_u|$. As this intersection size is constant for all edges in the star, all edges in $S(e \setminus \{u,v\})$ lie in the slice $\mathrm{Slice}(B, |E_u|)$. The minimum degree condition on $L$ ensures $B$ satisfies the size constraints in Definition~\ref{def:slice}. This establishes the existence of such a slice.

Next, we prove uniqueness. Assume $\mathrm{Slice}(B', j)$ is another slice satisfying the lemma's conditions. The condition on the star implies that for any edge $\{x,y\} \in E(L)$, the value $|\{x,y\} \cap B'|$ must be constant. This constant cannot be 0 or 2, as either case would imply that one of the sets $V(L) \cap B'$ or $V(L) \setminus B'$ contains all vertices of $L$, which would result in $|B'| < \frac{1}{3}|V(G)|$ or $|B'| > \frac{2}{3}|V(G)|$, contradicting the size constraints in Definition~\ref{def:slice}. Thus, the constant must be 1. This forces $(V(L) \cap B', V(L) \setminus B')$ to be a bipartition of $L$. Since $L$ is connected, its bipartition is unique, meaning $V(L) \cap B'$ must be either $\mathcal{P}$ or $\mathcal{Q}$. This choice, combined with the requirement that $e \cap B'$ must be either $E_u$ or $E_v$, uniquely determines the set $B'$ up to complementation. Thus, the slice satisfying the conditions is unique.
\end{proof}

The previous lemma guarantees a unique slice for the star associated with a single non-exchangeable pair. The next lemma strengthens this result, showing that this slice is in fact identical for all non-exchangeable pairs within the same edge $e$.

\begin{lemma}
\label{lemma:edge_in_unique_slice}
Let $G$ be a $k$-uniform hypergraph with $\delta_{k-1}(G) > \frac{1}{3} |V(G)|$ that contains no homomorphic copy of $C^k_{\ell}$, where $\ell$ is a fixed integer satisfying $\ell \geq 20k^2$ and $k \nmid \ell$. For any edge $e \in E(G)$, there exists a unique slice such that it is precisely the slice established in Lemma~\ref{lemma:unique_slice_star} for any given pair $\{u,v\} \in E(H_e)$. Consequently, for every pair $\{u,v\} \in E(H_e)$, all edges in the star $S(e \setminus \{u,v\})$ are contained within this slice.
\end{lemma}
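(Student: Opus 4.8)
The plan is to first pin down the slice produced by \cref{lemma:unique_slice_star} in explicit terms, and then to show it does not depend on the chosen non-exchangeable pair. By \cref{lemma:edge_partition}, $H_e$ is a complete bipartite graph; write $(X,Y)$ for its parts, so that a pair $\{u,v\}$ lies in $E(H_e)$ exactly when, after relabelling, $u\in X$ and $v\in Y$. Inspecting the proof of \cref{lemma:unique_slice_star}, the slice it furnishes for $\{u,v\}$ is $\mathrm{Slice}(B_{u,v},|X|)$, where $B_{u,v}=X\cup\mathcal{P}_{u,v}$ and $\mathcal{P}_{u,v}$ is the part containing $u$ of the connected bipartite link graph $L(e\setminus\{u,v\})$; in particular $e\cap B_{u,v}=X$ for every pair. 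Hence the lemma amounts to proving that $B_{u,v}\setminus e$, equivalently $\mathcal{P}_{u,v}\cap(V(G)\setminus e)$, is the same for all non-exchangeable pairs $\{u,v\}$ in $e$; once this common set $B$ is known, $\mathrm{Slice}(B,|X|)$ is the required slice and the ``consequently'' clause is immediate from \cref{lemma:unique_slice_star}.

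Next I would reduce to pairs sharing a vertex. Since $H_e$ is a complete bipartite graph on $k\ge 3$ vertices, any two of its edges are linked by a sequence of at most three edges in which consecutive members share a vertex (in the line graph of $K_{|X|,|Y|}$ any two vertices are within distance $2$ whenever $|X|+|Y|\ge 3$). Thus it suffices to treat the case where $\{u,v\}$ and $\{u',v'\}$ share a vertex, say $u=u'$, with $v,v'\in Y$ distinct, and to show $\mathcal{P}_{u,v}\cap(V(G)\setminus e)=\mathcal{P}_{u,v'}\cap(V(G)\setminus e)$.

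For the shared-vertex case, the first observation is that $S(e\setminus\{u\})\subseteq S(e\setminus\{u,v\})\cap S(e\setminus\{u,v'\})$, so both slices $\mathrm{Slice}(B_{u,v},|X|)$ and $\mathrm{Slice}(B_{u,v'},|X|)$ contain every edge $(e\setminus\{u\})\cup\{w\}$; comparing intersections with $B_{u,v}$ forces $w\in B_{u,v}\cap B_{u,v'}$, and since $\{v,w\}$ and $\{v',w\}$ are edges of $L(e\setminus\{u,v\})$ and $L(e\setminus\{u,v'\})$ with $v$, $v'$ on the sides not containing $u$, we get $N(e\setminus\{u\})\subseteq\mathcal{P}_{u,v}\cap\mathcal{P}_{u,v'}$. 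Using $\delta_{k-1}(G)>\tfrac13|V(G)|$ together with the disjointness of the two link-graph neighbourhoods of a non-exchangeable pair, the pigeonhole argument from the proof of \cref{lemma:link_graph_properties} shows that for $z\in V(G)\setminus e$ one has $z\in\mathcal{P}_{u,v}$ if and only if $z$ and $u$ have a common neighbour in $L(e\setminus\{u,v\})$ --- that is, there is a vertex $w$ with $(e\setminus\{u,v\})\cup\{u,w\}\in E(G)$ and $(e\setminus\{u,v\})\cup\{z,w\}\in E(G)$ --- and likewise with $v'$ in place of $v$. It remains to show these two conditions coincide. Passing to the $3$-uniform link $L^{*}:=L(e\setminus\{u,v,v'\})$, inside which $L(e\setminus\{u,v\})$ and $L(e\setminus\{u,v'\})$ are precisely the links of the vertices $v'$ and $v$, the matter reduces to a ``swap'' statement for $L^{*}$: the key point is that $v$ and $v'$ lie in the same part $Y$ of $H_e$, hence are exchangeable in $e$, and chaining \cref{lemma:auxiliary_isomorphism} along a short walk of adjacent edges that realizes the transposition $(v\;v')$ allows one to interchange $v$ and $v'$ inside the relevant edges of $G$; combining this with the codegree hypothesis on $G$ (which gives every $(k-1)$-set that arises more than $\tfrac13|V(G)|$ neighbours, enabling a three-set pigeonhole) yields the equivalence. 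This shared-vertex step --- essentially a one-uniformity-down analogue of the argument proving that link graphs are bipartite --- is the main obstacle; everything else is bookkeeping with characterizations already in hand plus routine pigeonhole.

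Finally, propagating the shared-vertex equality along the linking sequences of the second paragraph shows $B_{u,v}$ equals one set $B$ for every non-exchangeable pair in $e$. Then $\mathrm{Slice}(B,|X|)$ is the unique slice asserted by the lemma, and for each $\{u,v\}\in E(H_e)$ it contains $S(e\setminus\{u,v\})$ by \cref{lemma:unique_slice_star}, which gives the ``consequently'' statement and completes the proof.
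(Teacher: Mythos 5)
Your reduction to the shared-vertex case is correct, and your identification of the $3$-uniform link $L^{*}=L(e\setminus\{u,v,v'\})$ as the right object mirrors what the paper actually does. Your characterization of $\mathcal{P}_{u,v}$ on $V(G)\setminus e$ via ``$z$ and $u$ have a common neighbour in $L(e\setminus\{u,v\})$'' is also valid: by the pigeonhole argument from Lemma~\ref{lemma:link_graph_properties}, $N_L(z)$ must meet $N_L(u)\cup N_L(v)$, and since $L$ is bipartite exactly one of the two can happen, giving the stated iff.

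However, the core step --- showing that this common-neighbour condition relative to $\{u,v\}$ agrees with the one relative to $\{u,v'\}$ --- is not actually proven. You describe it as a ``swap statement for $L^{*}$'' to be handled by ``chaining Lemma~\ref{lemma:auxiliary_isomorphism} along a short walk of adjacent edges that realizes the transposition $(v\;v')$,'' but no such walk is exhibited, and it is not clear what sequence of adjacent $G$-edges would transport a witness $w$ for the first condition to a witness $w'$ for the second. The two conditions involve genuinely different link graphs on different vertex sets ($v'$ is absent from $V(L(e\setminus\{u,v\}))$, and $v$ from $V(L(e\setminus\{u,v'\}))$), so ``interchanging $v$ and $v'$'' is not a local relabelling. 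This is precisely where the substance of the lemma lies. The paper's proof handles it by defining, on $V(L^{*})$, the sets $\mathcal{P}$ and $\mathcal{Q}$ of vertices $x$ witnessing (respectively) a non-exchangeable or exchangeable pair in some hyperedge $A\cup\{x,y,z\}$, proving these form a partition (using the connectivity of $L(A\cup\{x\})$ from Lemma~\ref{lemma:link_graph_properties} together with Lemma~\ref{lemma:auxiliary_isomorphism} to rule out $\mathcal{P}\cap\mathcal{Q}\neq\emptyset$), and then establishing the structural property that every $3$-edge of $L^{*}$ meets $\mathcal{P}$ in $0$ or $2$ vertices. That parity fact is what makes the single set $B=\mathcal{Q}\cup E_1$ work simultaneously for the pair $\{u,v\}$ and the pair $\{u,v'\}$. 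Your outline omits all of this and would need essentially the same machinery to be supplied before it could be called a proof.
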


\begin{proof}[Proof of Lemma~\ref{lemma:edge_in_unique_slice}]
Uniqueness follows directly from Lemma~\ref{lemma:unique_slice_star}.

To prove existence, since Lemma~\ref{lemma:edge_partition} establishes that $H_e$ is a complete bipartite graph, it suffices to show that any two adjacent edges, for instance $\{u,v\}$ and $\{u,w\}$, generate the same slice via Lemma~\ref{lemma:unique_slice_star}.
Let $(E_1, E_2)$ be the bipartition of the complete bipartite graph $H_e$, with $u \in E_1$ and $v,w \in E_2$. Let $A := e \setminus \{u,v,w\}$ and consider the 3-uniform link hypergraph $F := L(A)$ on the vertex set $V(G) \setminus A$. We define two vertex sets $\mathcal{P}, \mathcal{Q} \subseteq V(F)$ as follows:
\begin{align*}
    \mathcal{P} &:= \{x \in V(F) \colon \exists \{y,z\} \text{ s.t. } \{x,y,z\} \in E(F) \text{ and } \{y,z\} \in E(H_{A \cup \{x,y,z\}})\}, \\
    \mathcal{Q} &:= \{x \in V(F) \colon \exists \{y,z\} \text{ s.t. } \{x,y,z\} \in E(F) \text{ and } \{y,z\} \notin E(H_{A \cup \{x,y,z\}})\}.
\end{align*}
An immediate consequence of these definitions is that $u \in \mathcal{Q}$ while $v,w \in \mathcal{P}$.

\begin{claim}
The pair $(\mathcal{P}, \mathcal{Q})$ forms a partition of $V(F)$.
\end{claim}
\begin{poc}
First, we observe that $\mathcal{P} \cup \mathcal{Q} = V(F)$. This is because the minimum codegree property of $G$ guarantees that for every vertex $x \in V(F)$, there exists at least one edge of the form $\{x,y,z\} \in E(F)$. By definition, this places $x$ in either $\mathcal{P}$ or $\mathcal{Q}$.

Next, we show that $\mathcal{P}$ and $\mathcal{Q}$ are disjoint by contradiction. Assume there is a vertex $x \in \mathcal{P} \cap \mathcal{Q}$. By definition, this implies the existence of hyperedges $e_1 = A \cup \{x,y,z\}$ and $e_2 = A \cup \{x,y',z'\}$, such that $\{y,z\} \in E(H_{e_1})$ while $\{y',z'\} \notin E(H_{e_2})$. The condition $\{y,z\} \in E(H_{e_1})$ means that $\{y,z\}$ is a non-exchangeable pair in the edge $e_1$. By Lemma~\ref{lemma:link_graph_properties}, the corresponding link graph $L(e_1 \setminus \{y,z\}) = L(A \cup \{x\})$ is therefore connected. This implies the existence of a path in $L(A \cup \{x\})$ from $\{y,z\}$ to $\{y',z'\}$. For every pair of adjacent edges in this path, combining each with $A \cup \{x\}$ yields hyperedges that satisfy the conditions of Lemma~\ref{lemma:auxiliary_isomorphism}, thereby establishing an isomorphism between their auxiliary graphs. This chain of isomorphisms from $H_{e_1}$ to $H_{e_2}$ maps $\{y,z\}$ to $\{y',z'\}$. This yields a contradiction, since an edge in $H_{e_1}$ is mapped to a non-edge in $H_{e_2}$.
\end{poc}

A key structural property of the partition is that for any hyperedge $e' \in E(F)$, we have $|e' \cap \mathcal{P}| \in \{0, 2\}$. This can be seen by considering the arrangement of the vertices of $e'$ within the bipartite graph $H_{A\cup e'}$. If all three vertices of $e'$ lie in the same partition of $H_{A\cup e'}$, they form an independent set, placing all three in $\mathcal{Q}$ and none in $\mathcal{P}$. If they are split between the two partitions, there are exactly two edges among them in $H_{A\cup e'}$, which by definition places two vertices in $\mathcal{P}$ and one in $\mathcal{Q}$.

We prove the existence of the required slice by constructing it directly. Let the defining set be $B := \mathcal{Q} \cup E_1$. We now verify that the slice $\mathrm{Slice}(B, |e \cap B|)$ satisfies the conditions of Lemma~\ref{lemma:unique_slice_star} for both pairs $\{u,v\}$ and $\{u,w\}$.

By construction, the sets $e \cap B = E_1$ and $e \setminus B = E_2$ already form the bipartition of $H_e$. It is therefore sufficient to show that both  $S(e \setminus \{u,v\})$ and $S(e \setminus \{u,w\})$ are contained in the slice.

We first verify this for $S(e \setminus \{u,v\})$. An arbitrary hyperedge in this star, $e' := \{x,y\} \cup (e \setminus \{u,v\})$, corresponds to an edge $\{x,y\} \in E(L(e \setminus \{u,v\}))$. This implies the set $\{w,x,y\}$ is a hyperedge in the 3-uniform link hypergraph $F$. Since $w \in \mathcal{P}$, the structural property that any hyperedge of $F$ intersects $\mathcal{P}$ in either 0 or 2 vertices implies that exactly one of $x,y$ is in $\mathcal{P}$ and the other is in $\mathcal{Q}$, which gives $|\{x,y\} \cap B| = 1$. Furthermore, since $u \in E_1 \subseteq B$ and $v \in \mathcal{P} \subseteq V(G) \setminus B$, we also have $|\{u,v\} \cap B| = 1$. The total intersection size of $e'$ with $B$ is therefore given by the following:
\[
|e' \cap B| = |\{x,y\} \cap B| + |(e \setminus \{u,v\}) \cap B| = 1 + (|e \cap B| - |\{u,v\} \cap B|) = 1 + (|e \cap B| - 1) = |e \cap B|.
\]
This confirms that $S(e \setminus \{u,v\})$ is contained in the slice.

An identical argument holds for $S(e \setminus \{u,w\})$, which completes the proof of existence.\end{proof}

\subsection{Step 3: Characterizing Tight Components}
\label{subsec:step3}

The result of Lemma~\ref{lemma:edge_in_unique_slice} establishes a unique slice for each individual edge. This section extends that finding, proving in the following lemma that all edges within a single tight component share a common unique slice and, furthermore, that this slice provides a complete characterization of the component.

\begin{lemma}
\label{lemma:reachable_edges_in_slice}
Let $G$ be a $k$-uniform hypergraph with $\delta_{k-1}(G) > \frac{1}{3} |V(G)|$ that contains no homomorphic copy of $C^k_{\ell}$, where $\ell$ is a fixed integer satisfying $\ell \geq 20k^2$ and $k \nmid \ell$. For any edge $e \in E(G)$, the following hold:
\begin{enumerate}
    \item There exists a single slice $\mathrm{Slice}(B,i)$ that serves as the unique slice guaranteed by Lemma~\ref{lemma:edge_in_unique_slice} for every edge $e' \in \mathcal{C}(e)$. Consequently, $\mathcal{C}(e) \subseteq \mathrm{Slice}(B,i)$.
    \item This slice provides a perfect characterization of the tight component:  $\mathcal{C}(e)=\mathrm{Slice}(B,i)\cap E(G)$. Furthermore, the entire graph $G$ contains no edge $f$ such that $|f \cap B| \in \{i-1, i+1\}$.
\end{enumerate}
\end{lemma}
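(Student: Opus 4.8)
The plan is to build on Lemma~\ref{lemma:edge_in_unique_slice}, which assigns to each edge $e'$ a unique slice $\mathrm{Slice}(B_{e'}, i_{e'})$, and to show this assignment is constant on each tight component. For part~(1), it suffices to prove that two \emph{adjacent} edges $e_1, e_2$ (sharing $k-1$ vertices, say $e_1 = A \cup \{u\}$ and $e_2 = A \cup \{w\}$ with $|A| = k-1$) receive the same slice; the result for the whole component then follows by transitivity along the defining sequence of adjacencies. To compare the slices of $e_1$ and $e_2$, I would first observe that since $H_{e_1}$ is complete bipartite (Lemma~\ref{lemma:edge_partition}) with at least $k \ge 3$ vertices, $e_1$ contains at least one vertex $v$ in the part of $H_{e_1}$ not containing $u$, so $\{u,v\} \in E(H_{e_1})$; symmetrically for $e_2$. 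The key point is that $e_1$ and $e_2$ \emph{both} contain the $(k-2)$-set $A \setminus \{v\}$ (choosing $v \in A$ appropriately, or more carefully handling the vertex $u$ vs $w$), so the star $S(A \setminus \{v\})$ — or a suitable $(k-2)$-subset common to both edges — is contained in both slices by Lemma~\ref{lemma:edge_in_unique_slice}. Since a star of size $> |V(G)|/3$ (guaranteed by the codegree condition) cannot lie in two distinct slices — two slices $\mathrm{Slice}(B,i)$ and $\mathrm{Slice}(B',j)$ sharing more than a negligible fraction of their edges must coincide up to complementation, by the same connectivity-of-link-graph argument used in the uniqueness half of Lemma~\ref{lemma:unique_slice_star} — the two slices agree.

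Let me be more precise about the adjacency step, since this is where care is needed. Given adjacent $e_1 = \{v_1, \dots, v_k\}$ and $e_2 = \{v_2, \dots, v_{k+1}\}$, Lemma~\ref{lemma:auxiliary_isomorphism} tells us $H_{e_1} \cong H_{e_2}$ via $v_1 \leftrightarrow v_{k+1}$, fixing $v_2, \dots, v_k$. Pick any $v_j$ ($2 \le j \le k$) lying in the part of $H_{e_1}$ opposite to $v_1$ — such a $v_j$ exists unless $v_1$'s part is everything, which is impossible in a complete bipartite graph on $\ge 2$ vertices. Then $\{v_1, v_j\} \in E(H_{e_1})$; under the isomorphism, $v_j$ lies in the part of $H_{e_2}$ opposite to $v_{k+1}$, so $\{v_{k+1}, v_j\} \in E(H_{e_2})$. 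Now $S(e_1 \setminus \{v_1, v_j\}) = S(\{v_2, \dots, v_k\} \setminus \{v_j\})$ and $S(e_2 \setminus \{v_{k+1}, v_j\}) = S(\{v_2, \dots, v_k\} \setminus \{v_j\})$ are the \emph{same} star; by Lemma~\ref{lemma:edge_in_unique_slice} this common star lies in the slice of $e_1$ and in the slice of $e_2$. A large common star forces the two slices to coincide, giving $\mathrm{Slice}(B_{e_1}, i_{e_1}) = \mathrm{Slice}(B_{e_2}, i_{e_2})$.

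For part~(2), write $\mathrm{Slice}(B,i)$ for the common slice of $\mathcal{C}(e)$. The inclusion $\mathcal{C}(e) \subseteq \mathrm{Slice}(B,i) \cap E(G)$ is part~(1). For the reverse, I would argue that $\mathrm{Slice}(B,i) \cap E(G)$ is "tightly closed": if $f \in \mathcal{C}(e)$ and $f'$ is an edge adjacent to $f$, then $f'$ lies in the same slice (by part~(1) applied to the component of $f'$ — but one must first show $f' \in \mathcal{C}(e)$, which is immediate from the definition of tight component). Conversely, to show every edge of $\mathrm{Slice}(B,i)$ is reachable, the mechanism should be: any edge $f$ with $|f \cap B| = i$ that is adjacent (in the sense of sharing $k-1$ vertices) to some edge of $\mathcal{C}(e)$ is in $\mathcal{C}(e)$ by definition; and using the link-graph connectivity (Lemma~\ref{lemma:link_graph_properties}) one shows $\mathrm{Slice}(B,i) \cap E(G)$ cannot split into two pieces with no adjacency between them — this is essentially the statement that the link of a $(k-2)$-set restricted to the slice is connected. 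The "no edge $f$ with $|f \cap B| \in \{i-1, i+1\}$" claim is the crucial extra input: I would prove it by contradiction. If such an $f$ exists, pick an edge $e' \in \mathcal{C}(e)$ sharing as many vertices as possible with $f$; moving one vertex of $e'$ across $B$ toward $f$ stays inside $E(G)$ only via edges in adjacent slices, and the codegree condition forces an edge in the "forbidden" adjacent slice to be tightly connected to $\mathcal{C}(e)$, contradicting part~(1) (which pins $\mathcal{C}(e)$ to a single slice).

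The main obstacle I anticipate is part~(2), specifically proving $\mathrm{Slice}(B,i) \cap E(G) \subseteq \mathcal{C}(e)$ and the non-existence of edges in adjacent slices simultaneously — these two facts are intertwined, since the first needs enough edges in the slice to be mutually reachable, and the reachability walks would a priori want to pass through adjacent slices unless one already knows those are empty. The resolution is likely a simultaneous induction or a clever ordering: first establish that \emph{if} $G$ had an edge in $\mathrm{Slice}(B, i\pm 1)$, the codegree condition would let us tightly connect it into $\mathcal{C}(e)$, contradicting the single-slice conclusion of part~(1); once adjacent slices are known to be empty, the codegree condition applied to $(k-1)$-sets with $i-1$ or $i$ vertices in $B$ forces \emph{all} their extensions to land in $\mathrm{Slice}(B,i)$, and then standard link-connectivity (Lemma~\ref{lemma:link_graph_properties}) yields that $\mathrm{Slice}(B,i)\cap E(G)$ is a single tight component. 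Part~(1) itself I expect to be routine given Lemmas~\ref{lemma:auxiliary_isomorphism} and~\ref{lemma:edge_in_unique_slice}, modulo the bookkeeping of which common $(k-2)$-set to use.
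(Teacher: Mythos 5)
Your part (1) strategy is the same as the paper's: reduce to adjacent edges $e_1, e_2$, find a non-exchangeable pair in each whose removal yields the same $(k-2)$-set $A$, and observe that both slices contain the star $S(A)$. But your closing step — \emph{``A large common star forces the two slices to coincide''} — is not established and, as stated, is false. The uniqueness in Lemma~\ref{lemma:unique_slice_star} needs \emph{two} hypotheses: the slice must contain the star \emph{and} must induce the correct bipartition of $H_e$. A common star pins down $B\cap V(L(A))$ up to swapping the two parts, but it says nothing about how $B$ meets the $k-2$ vertices of $A$ itself. Concretely, if $B' = B \bigtriangleup D$ for some nonempty $D\subseteq A$ and $j = i - |A\cap B| + |A\cap B'|$, then every $k$-set containing $A$ is in $\mathrm{Slice}(B,i)$ if and only if it is in $\mathrm{Slice}(B',j)$, yet the two slices disagree on $k$-sets not containing $A$. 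So the two slices of $e_1, e_2$ could a priori differ on $A$. The paper closes this gap explicitly: since $e_1, e_2 \in S(A\cup\{v_j\}) \subseteq \mathrm{Slice}(B,i)$ (the slice of $e_1$), the intersection sizes $|e_1\cap B| = |e_2\cap B| = i$ force the two non-shared vertices $v_1$ and $v_{k+1}$ to have the same membership in $B$; combined with the isomorphism of Lemma~\ref{lemma:auxiliary_isomorphism}, this shows $B$ induces the bipartition of $H_{e_2}$ as well, and then uniqueness applies. You have all the ingredients (the isomorphism, the common star) but do not carry out this bipartition-matching step, which is the substantive content of the adjacency argument.

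For part (2), your high-level plan — show that any edge with $|f\cap B|\in\{i-1,i,i+1\}$ must lie in $\mathcal{C}(e)$, then derive the non-existence of edges in adjacent slices from part (1) — matches the paper's structure and correctly identifies the circularity one must avoid. However, the crucial technical step is missing: you sketch ``pick $e'\in\mathcal{C}(e)$ sharing as many vertices as possible with $f$; moving one vertex across $B$ toward $f$ stays inside $E(G)$ only via edges in adjacent slices,'' but this does not describe a working argument. The paper's proof of the corresponding claim (its Claim~\ref{claim:reachable-subset-special-intersection}) is more delicate: it maximizes $|e_0\cap A|$ over $e_0\in\mathcal{C}(e)$ for a $(k-2)$-set $A\subseteq f$ with $|A\cap B|=i-1$, constructs three specific $(k-1)$-sets $S_1,S_2,S_3$ from $e_0$ and a vertex of $A\setminus e_0$, shows $N(S_1)$ and $N(S_2)$ are disjoint because one lies in $B$ and the other in $B^c$, and then applies pigeonhole to the codegree condition to force a two-step tight move that strictly increases $|e_0\cap A|$. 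Only then does link connectivity of $L(A)$ finish the argument. Your sketch does not contain the disjointness-of-neighborhoods observation nor the two-step extension producing a new edge still in $\mathcal{C}(e)$, so as written it would not go through.
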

\begin{proof}[Proof of Lemma~\ref{lemma:reachable_edges_in_slice}]
We prove the first statement by showing that any two adjacent edges in $\mathcal{C}(e)$ share the same unique slice, as the claim for the entire component then follows from the definition of tight connectivity. Let $e_1 = \{x_1, \dots, x_{k-1}, a\}$ and $e_2 = \{x_1, \dots, x_{k-1}, b\}$ be two such adjacent edges.

Our first step is to determine the unique slice associated with $e_1$. We find an edge in the auxiliary graph $H_{e_1}$ incident to $a$, and let it be $\{x_{k-1}, a\}$ without loss of generality. By Lemma~\ref{lemma:unique_slice_star}, the pair $(e_1, \{x_{k-1}, a\})$ then determines a unique slice, which we denote $\mathrm{Slice}(B,i)$.

Next, we show that this same slice, $\mathrm{Slice}(B,i)$, is also the unique slice corresponding to $e_2$. Notice that $S(e_1\setminus \{x_{k-1}, a\})=S(\{x_1, \dots, x_{k-2}\})=S(e_2\setminus \{x_{k-1}, b\})$ is contained in $\mathrm{Slice}(B,i)$.
It suffices to prove that $B$ also induces the bipartition of $H_{e_2}$. We begin by noting that since both $e_1$ and $e_2$ belong to the star $S(\{x_1, \dots, x_{k-2}\})$, both are contained in $\mathrm{Slice}(B,i)$. The isomorphism between $H_{e_1}$ and $H_{e_2}$ (from Lemma~\ref{lemma:auxiliary_isomorphism}) maps $a$ to $b$. Because both edges lie in $\mathrm{Slice}(B,i)$, they have the same intersection size with $B$, which forces $a$ and $b$ to share the same membership status relative to $B$. This ensures the isomorphism preserves the bipartition, confirming that $B$ correctly partitions $H_{e_2}$.

Therefore, $\mathrm{Slice}(B,i)$ fulfills the conditions of Lemma~\ref{lemma:unique_slice_star} for the pair $(e_2, \{x_{k-1}, b\})$ as well, establishing it as the unique slice for $e_2$. Since we have shown that any two adjacent edges share a common unique slice, the property extends to the entire tight component $\mathcal{C}(e)$.

Given that $\mathcal{C}(e)$ is contained within the unique slice $\mathrm{Slice}(B,i)$, we now prove the lemma's second assertion, beginning with the following auxiliary claim. We use $B^c$ to denote $V(G)\setminus B$.

\begin{claim}
\label{claim:reachable-subset-special-intersection}
For any set $A \subseteq V$ with $|A|=k-2$ and $|A \cap B| = i-1$, there exists an edge $e_0 \in \mathcal{C}(e)$ such that $A \subseteq e_0$.
\end{claim}

\begin{poc}
We proceed by contradiction. Assume the claim is false, let $e_0 \in \mathcal{C}(e)\subseteq \mathrm{Slice}(B,i)$ be an edge that maximizes $|e_0 \cap A|$. By assumption, $|e_0 \cap B| = i$, $|e_0 \setminus B| = k-i$ and $A \not\subseteq e_0$.

Our next step is to construct three $(k-1)$-sets whose neighborhoods must intersect. Since $A \not\subseteq e_0$, we can choose a vertex $a \in A \setminus e_0$. We will select two vertices from $e_0 \setminus A$ for our construction. Since $|e_0 \cap B| = i$ while $|e_0 \cap A \cap B| \le |A \cap B| = i-1$, there exists a vertex $u \in (e_0 \setminus A) \cap B$. Since $|e_0 \setminus B| = k-i$ while $|(e_0 \cap A) \setminus B| \le |A \setminus B| = k-i-1$, there exists a vertex $v \in (e_0 \setminus A) \cap B^c$. We then define three $k-1$ sets: $S_1 = e_0 \setminus \{u\}$, $S_2 = e_0 \setminus \{v\}$, and $S_3 = (e_0 \setminus \{u,v\}) \cup \{a\}$. By the codegree condition and the Pigeonhole Principle, at least two of their neighborhoods must intersect.

We now show that two of these neighborhoods, $N(S_1)$ and $N(S_2)$, are in fact disjoint. Any vertex $w \in N(S_1)$ must form an edge $S_1 \cup \{w\}$  which is adjacent to  $e_0$ and thus lies in $\mathcal{C}(e)$, which requires $|(S_1 \cup \{w\}) \cap B| = i$. Since $|S_1 \cap B| = i-1$, it must be that $w \in B$, so $N(S_1) \subseteq B$. A similar argument shows that any vertex in $N(S_2)$ must belong to $B^c$, implying $N(S_2) \subseteq B^c$. Therefore, the two neighborhoods are disjoint.

The disjointness of $N(S_1)$ and $N(S_2)$ forces another pair of neighborhoods to intersect. Without loss of generality, let $w \in N(S_1) \cap N(S_3)$. This allows us to form two new edges, $e_1 = S_1 \cup \{w\}=(e_0 \setminus \{u\}) \cup \{w\}$ and $e_2 = S_3 \cup \{w\}=(e_0 \setminus \{u,v\}) \cup \{a,w\}$. By construction, $|e_0 \cap e_1| = k-1$ and $|e_1 \cap e_2| = k-1$, meaning $e_0$ is adjacent to $e_1$, and $e_1$ is adjacent to $e_2$. This implies that $e_2$ is also in the tight component $\mathcal{C}(e)$.

This new edge $e_2$ leads to the final contradiction. Since $e_2$ is formed from $e_0$ by removing two vertices $u,v$ that are not in $A$ and adding two vertices $a,w$ where $a$ is in $A$, its intersection with $A$ is strictly larger. This contradicts the maximality of $|e_0 \cap A|$ and thus proves the claim.
\end{poc}

Having established Claim~\ref{claim:reachable-subset-special-intersection}, it remains to show that $\mathcal{C}(e)$ contains precisely all edges with intersection size $i$, and that no edges with intersection size $i-1$ or $i+1$ exist. We will prove the following claim, which provides a unified proof for both. The claim directly establishes the characterization of $\mathcal{C}(e)$, while the non-existence of edges $f$ with $|f \cap B| \in \{i-1, i+1\}$ is proved by contradiction: if an edge $f$ with $|f \cap B| \in \{i-1, i+1\}$ existed, the claim would force $f \in \mathcal{C}(e)\subseteq\mathrm{Slice}(B,i)$ and $|f \cap B| = i\notin \{i-1, i+1\}$.

\begin{claim}\label{claim:intersection-implies-reachability}
 Any edge $e' \in E(G)$ satisfying $|e' \cap B| \in \{i-1, i, i+1\}$ must belong to $\mathcal{C}(e)$.
\end{claim}
\begin{poc}
Our goal is to show that $e'$ is in the same tight component as $e$. We select a subset $A \subseteq e'$ of size $k-2$ with $|A \cap B| = i-1$. This selection is possible because $|e' \cap B| \in \{i-1, i, i+1\}$. By Claim~\ref{claim:reachable-subset-special-intersection}, there exists an edge $e_0 \in \mathcal{C}(e)$ containing $A$. Let $e_0 = A \cup \{u,v\}$ and $e' = A \cup \{x,y\}$.

The properties of $e_0$ allow us to show that the link graph $L(A)$ is connected. Since $e_0 \in \mathcal{C}(e)$, the preceding proof of the first statement of this lemma shows that the slice for $e_0$ guaranteed by Lemma~\ref{lemma:edge_in_unique_slice} is also $\mathrm{Slice}(B,i)$. This implies $|e_0 \cap B| = i$. From our choice of $A$ with $|A \cap B| = i-1$, it must be that $|\{u,v\} \cap B| = 1$. By Lemma~\ref{lemma:unique_slice_star}, the auxiliary graph $H_{e_0}$ is bipartite with parts $e_0 \cap B$ and $e_0 \setminus B$. This places $u$ and $v$ in opposite parts of the bipartition, confirming $\{u,v\} \in E(H_{e_0})$. By Lemma~\ref{lemma:link_graph_properties}, this implies that $L(A)$ is connected.

The connectivity of $L(A)$ provides the required connection. The hyperedges $e_0$ and $e'$ correspond to edges $\{u,v\}$ and $\{x,y\}$ in $L(A)$. As $L(A)$ is connected, there is a path of edges $f_0, \dots, f_t$ from $f_0=\{u,v\}$ to $f_t=\{x,y\}$ where any two consecutive edges $f_j, f_{j+1}$ share a vertex. This path induces a sequence of hyperedges $E_j = A \cup f_j$. Because consecutive edges in the path share a vertex, the corresponding hyperedges $E_j$ and $E_{j+1}$ are adjacent. This sequence of adjacent hyperedges shows that $e_0$ and $e'$ are in the same tight component. Since $e_0 \in \mathcal{C}(e)$, it follows that $e' \in \mathcal{C}(e)$. \end{poc}

This completes the proof of this subsection.
\end{proof}

\subsection{Step 4: Completing the Proof of \cref{thm:structural}}
\label{subsec:step4}

In this final section, we complete the proof of \cref{thm:structural}. Building on the characterization of tight components from the previous section, we establish the odd intersection property by showing that the underlying vertex partition is universal across the entire graph. Throughout this section, we use $B^c$ to denote $V(G)\setminus B$. For the reader's convenience, we restate the main theorem before providing the proof.

\medskip
\noindent\textbf{Theorem~\ref{thm:structural}.} \textit{Let $3\le k< \ell$ be integers with $k \nmid \ell$ and $\ell \geqslant 20k^2$.  For every $k$-uniform hypergraph $G$ that contains no homomorphic copy of $C^k_\ell$ and satisfies $\delta_{k-1}(G) > \frac{1}{3} |V (G)|$, there exists a vertex set $B \subseteq V (G)$ with $\frac{|V (G)|}{3} \leqslant |B| \leqslant \frac{2|V (G)|}{3}$ such that for every $e \in E(G)$, both $|e \cap B|$ and $|e\cap B^c|$ are odd. 
In particular, $k$ has to be even.}

\begin{proof}[Proof of \cref{thm:structural}]
The proof begins by analyzing the structure of the tight components of $G$. We associate with each tight component $\mathcal{C}$ a set of $(k-1)$-subsets, which we call its \emph{trace}, defined as follows:
\[T(\mathcal{C})=\bigcup_{f\in \mathcal{C}} \binom{f}{k-1}.\]

We now show that the collection of these distinct traces, $\{T(\mathcal{C})\}$, forms a partition of $\binom{V(G)}{k-1}$.

First, we establish that every $(k-1)$-set is contained in at least one trace. Given any $A \in \binom{V(G)}{k-1}$, the minimum codegree condition $\delta_{k-1}(G) > |V(G)|/3$ ensures that an edge $e = A \cup \{v\}$ exists in $G$. This edge $e$ must belong to some tight component $\mathcal{C}(e)$, which implies that $A$ is in its trace $T(\mathcal{C}(e))$.

Second, we establish that traces of distinct components are disjoint. Assume for contradiction that a set $A$ lies in two distinct traces, $T(\mathcal{C}_1)$ and $T(\mathcal{C}_2)$. This implies the existence of edges $e_1 \in \mathcal{C}_1$ and $e_2 \in \mathcal{C}_2$ that both contain $A$. Since $|e_1 \cap e_2|=|A| \ge k-1$, they are adjacent. By the definition of tight connectivity, they must belong to the same component, contradicting the assumption that $\mathcal{C}_1$ and $\mathcal{C}_2$ were distinct.

Having established both the coverage and disjointness properties, we conclude that $\{T(\mathcal{C})\}$ partitions $\binom{V(G)}{k-1}$.

We first establish an auxiliary claim that provides a uniqueness condition for traces, which will be applied in the proofs of the subsequent claims.

\begin{claim}\label{claim:slice-agreement}
Let $B \subseteq V(G)$ satisfy $k \leq |B| \leq |V(G)|-k$. Suppose that for some subset $D \subseteq V(G)$ and integer $j$, the set $T = \{A \in \binom{V(G)}{k-1} \colon |A \cap D| \in \{j, j-1\}\}$ contains all $(k-1)$-sets intersecting $B$ in exactly $i$ vertices and excludes all $(k-1)$-sets intersecting $B$ in exactly $i+1$ vertices. Then either $(D, j) = (B, i)$ or $(D, j) = (B^c, k-i)$, and consequently $T = \{A \in \binom{V(G)}{k-1} \colon |A \cap B| \in \{i, i-1\}\}$.
\end{claim}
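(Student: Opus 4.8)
The plan is to run a local ``flip'' analysis of $D$ along the boundary between the two slices $\mathcal{S}_i := \{A\in\binom{V(G)}{k-1}\colon |A\cap B| = i\}$ and $\mathcal{S}_{i+1}$, which will force $D\in\{\emptyset, B, B^c, V(G)\}$; the exclusion hypothesis then rules out $\emptyset$ and $V(G)$ and pins down $j$. Throughout I assume $1\le i\le k-2$, so that $\mathcal{S}_i$ and $\mathcal{S}_{i+1}$ are non-empty; this is the range occurring in our applications, and it is also where every boundary flip below is available.

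\emph{Boundary flips.} First I fix $A\in\mathcal{S}_i$ together with any $x\in A\cap B^c$ and $y\in B\setminus A$ (both non-empty since $i\le k-2$ and $i<k\le|B|$), and consider $A^{+}:=(A\setminus\{x\})\cup\{y\}\in\mathcal{S}_{i+1}$. Since $A\in T$, $A^{+}\notin T$, and $|A^{+}\cap D|$ differs from $|A\cap D|$ by at most $1$, a one-line case check yields: if $|A\cap D| = j-1$ then $x\in D$ and $y\notin D$, and if $|A\cap D| = j$ then $x\notin D$ and $y\in D$. As $x,y$ were arbitrary, this shows that any $A\in\mathcal{S}_i$ with $|A\cap D|=j-1$ satisfies $A\cap B^c\subseteq D$ and $(B\setminus A)\cap D=\emptyset$, while any $A\in\mathcal{S}_i$ with $|A\cap D|=j$ satisfies $(A\cap B^c)\cap D=\emptyset$ and $B\setminus A\subseteq D$.

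\emph{Constancy and identification of $D$.} Next I show that $|A\cap D|$ is constant on $\mathcal{S}_i$. The graph on $\mathcal{S}_i$ whose edges are single-element swaps that stay inside $B$ or inside $B^c$ is connected, so if the value were not constant there would be adjacent $A_1,A_2\in\mathcal{S}_i$ with $|A_1\cap D|=j-1$ and $|A_2\cap D|=j$. If the swap is inside $B$, then $A_1$ and $A_2$ share the same non-empty set $A_1\cap B^c=A_2\cap B^c$, and the two implications above make it both contained in and disjoint from $D$; if the swap is inside $B^c$, then $A_1,A_2$ share the non-empty set $B\setminus A_1=B\setminus A_2$, again yielding the same contradiction. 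Hence $|A\cap D|\equiv j^{*}$ on $\mathcal{S}_i$ for some $j^{*}\in\{j-1,j\}$. Now $A\cap B$ and $A\cap B^c$ range independently over all $i$-subsets of $B$ and all $(k-1-i)$-subsets of $B^c$, so $|A\cap B\cap D|+|A\cap B^c\cap D|=j^{*}$ forces each summand to be constant; and a constant value of $|P\cap D|$ over all $i$-subsets $P\subseteq B$ (using $1\le i\le|B|-1$), respectively over all $(k-1-i)$-subsets of $B^c$, forces $D\cap B\in\{\emptyset,B\}$ and $D\cap B^c\in\{\emptyset,B^c\}$. Therefore $D\in\{\emptyset,B,B^c,V(G)\}$.

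\emph{Finishing.} If $D=\emptyset$ or $D=V(G)$, then $|A\cap D|$ equals $0$ (respectively $k-1$) for every $(k-1)$-set, so $T$ is either $\emptyset$ or all of $\binom{V(G)}{k-1}$; as $T\supseteq\mathcal{S}_i\neq\emptyset$ it must be everything, contradicting $T\cap\mathcal{S}_{i+1}=\emptyset$. So $D=B$ or $D=B^c$. If $D=B$, then $\mathcal{S}_i\subseteq T$ gives $i\in\{j-1,j\}$ and $\mathcal{S}_{i+1}\cap T=\emptyset$ gives $i+1\notin\{j-1,j\}$, and these two conditions together force $j=i$; the symmetric computation with $|A\cap D|=(k-1)-|A\cap B|$ shows that $D=B^c$ forces $j=k-i$. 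In either case $(D,j)\in\{(B,i),(B^c,k-i)\}$, and substituting back gives $T=\{A\colon|A\cap B|\in\{i-1,i\}\}$, the last assertion. The main obstacle is the constancy step: it is the only point where the inclusion of $\mathcal{S}_i$ and the exclusion of $\mathcal{S}_{i+1}$ must be combined, through the two types of boundary flip, to produce a contradiction; the remaining steps are routine bookkeeping with the independence of the $B$- and $B^c$-coordinates.
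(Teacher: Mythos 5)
Your proof is correct and rests on the same core mechanism as the paper's: a single-vertex flip carrying a set from $\mathcal{S}_i$ into $\mathcal{S}_{i+1}$ forces a rigid change in $|A\cap D|$, and both proofs (yours explicitly, the paper's implicitly via the choice of $R$ with $|R\cap B|=i$) confine $i$ to $\{1,\dots,k-2\}$. The execution differs, though. The paper's argument is shorter and more direct: fix \emph{arbitrary} $x\in B$, $y\in B^c$, take a single $(k-2)$-set $R$ with $|R\cap B|=i$ avoiding both, and compare $A_1=R\cup\{x\}\in\mathcal{S}_{i+1}$ (so $A_1\notin T$) with $A_2=R\cup\{y\}\in\mathcal{S}_i$ (so $A_2\in T$); since $|A_1\cap D|$ and $|A_2\cap D|$ differ exactly by $\mathbb{1}[x\in D]-\mathbb{1}[y\in D]$, they must be on opposite sides of $D$, which already gives $D\in\{B,B^c\}$ with no intermediate machinery. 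Your version instead first derives set-inclusion constraints from $|A\cap D|\in\{j-1,j\}$, then proves $|A\cap D|$ is constant on $\mathcal{S}_i$ via connectivity of the intra-$B$/intra-$B^c$ swap graph, then deduces $D\in\{\emptyset,B,B^c,V(G)\}$ via coordinate independence, and finally rules out $\emptyset$ and $V(G)$. All of these steps are correct, but the constancy and classification stages are not needed: the paper gets the same conclusion by picking the common shell $R$ up front. The $j$-determination at the end is identical in both. So this is a valid but more roundabout route to the same conclusion, with the extra connectivity-of-$\mathcal{S}_i$ fact being the main added ingredient.
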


\begin{poc}
The proof proceeds in two main steps. 

First, we show that the set $D$ must be either $B$ or $B^c$. We begin by picking arbitrary vertices $x \in B$ and $y \in B^c$, and fixing a $(k-2)$-set $R$ such that $x, y \notin R$ and $|R \cap B| = i$. We then define two test sets, $A_1 = R \cup \{x\}$ and $A_2 = R \cup \{y\}$. By construction, $|A_1 \cap B| = i+1$ and $|A_2 \cap B| = i$. The premise on $T$ thus implies $A_1 \notin T$ and $A_2 \in T$. Now, assume for contradiction that $x$ and $y$ lie on the same side of the partition defined by $D$. This would mean $|A_1 \cap D| = |A_2 \cap D|$, which in turn forces $A_1$ and $A_2$ to have the same membership status in $T$. This is a contradiction. Therefore, $D$ must separate every vertex of $B$ from every vertex of $B^c$, which implies either $D=B$ or $D=B^c$.

Second, we determine the value of $j$ in each of these two cases.
\begin{itemize}
    \item If $D = B$, the definition of $T$ becomes $\{A \in \binom{V(G)}{k-1} \colon |A \cap B| \in \{j, j-1\}\}$. Since $T$ must contain all sets with $|A \cap B| = i$ but no sets with $|A \cap B| = i+1$, it follows that $j = i$.
    \item If $D = B^c$, the definition of $T$ is $\{A \in \binom{V(G)}{k-1} \colon |A \cap B^c| \in \{j, j-1\}\}$, which is equivalent to $\{A \in \binom{V(G)}{k-1} \colon |A \cap B| \in \{k-j, k-j-1\}\}$. Since $T$ must contain all sets with $|A \cap B| = i$ but no sets with $|A \cap B| = i+1$, it follows that $k-j = i$.
\end{itemize}
In both scenarios, the set $T$ is uniquely determined as $\{A \in \binom{V(G)}{k-1} \colon |A \cap B| \in \{i, i-1\}\}$, which completes the proof.
\end{poc}

The next claim establishes the equivalence between the slice-based characterization of a tight component from Lemma~\ref{lemma:reachable_edges_in_slice} and a precise formulation of its trace. This equivalence will be instrumental for the remainder of the proof.
\begin{claim}\label{claim:trace_form}
Fix a tight component \(\mathcal{C}(e)\), \(B \subseteq V(G)\), and $i \in \mathbb{Z}$. The following are equivalent:
\begin{enumerate}
    \item \(\mathrm{Slice}(B,i)\) is the unique slice that characterizes \(\mathcal{C}(e)\) as defined in Lemma~\ref{lemma:reachable_edges_in_slice}.
    \item \(T(\mathcal{C}(e)) = \{A \in \binom{V(G)}{k-1} \colon |A \cap B| \in \{i, i-1\}\}\).
\end{enumerate}
\end{claim}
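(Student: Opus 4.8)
The plan is to prove the two implications separately, using Lemma~\ref{lemma:reachable_edges_in_slice} as the main input together with the elementary observation that deleting a single vertex from a $k$-set $f$ with $|f\cap B|=i$ yields a $(k-1)$-set $A$ with $|A\cap B|\in\{i-1,i\}$, while conversely extending a $(k-1)$-set $A$ by one vertex yields a $k$-set $f$ with $|f\cap B|\in\{|A\cap B|,|A\cap B|+1\}$.

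For the implication $(1)\Rightarrow(2)$, assume $\mathrm{Slice}(B,i)$ is the unique slice characterizing $\mathcal{C}(e)$. By Lemma~\ref{lemma:reachable_edges_in_slice} every edge $f\in\mathcal{C}(e)$ has $|f\cap B|=i$, so each $A\in\binom{f}{k-1}$ satisfies $|A\cap B|\in\{i-1,i\}$; this gives $T(\mathcal{C}(e))\subseteq\{A\in\binom{V(G)}{k-1}\colon |A\cap B|\in\{i,i-1\}\}$. For the reverse containment, let $A$ be any $(k-1)$-set with $|A\cap B|\in\{i-1,i\}$. The codegree hypothesis $\delta_{k-1}(G)>\frac13|V(G)|>0$ provides an edge $f=A\cup\{v\}$, and then $|f\cap B|\in\{i-1,i,i+1\}$. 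Since Lemma~\ref{lemma:reachable_edges_in_slice} guarantees that $G$ contains no edge meeting $B$ in $i-1$ or $i+1$ vertices, we must have $|f\cap B|=i$, hence $f\in\mathrm{Slice}(B,i)\cap E(G)=\mathcal{C}(e)$ and $A\in T(\mathcal{C}(e))$. This proves $(1)\Rightarrow(2)$.

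For the implication $(2)\Rightarrow(1)$, invoke Lemma~\ref{lemma:reachable_edges_in_slice} to obtain the \emph{true} unique slice $\mathrm{Slice}(B',i')$ characterizing $\mathcal{C}(e)$. Applying the already-established implication $(1)\Rightarrow(2)$ to this slice gives $T(\mathcal{C}(e))=\{A\colon |A\cap B'|\in\{i',i'-1\}\}$. On the other hand, hypothesis $(2)$ says $T(\mathcal{C}(e))=\{A\colon|A\cap B|\in\{i,i-1\}\}$, so in particular $T:=T(\mathcal{C}(e))$ contains every $(k-1)$-set meeting $B$ in exactly $i$ vertices and excludes every $(k-1)$-set meeting $B$ in exactly $i+1$ vertices. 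The size bound $\frac{|V(G)|}{3}\le|B|\le\frac{2|V(G)|}{3}$ from Definition~\ref{def:slice} entails $k\le|B|\le|V(G)|-k$ once $|V(G)|\ge 3k$, so Claim~\ref{claim:slice-agreement} applies with the roles $D=B'$, $j=i'$ and yields $(B',i')=(B,i)$ or $(B',i')=(B^c,k-i)$. In either case $\mathrm{Slice}(B',i')=\mathrm{Slice}(B,i)$ by the identification $\mathrm{Slice}(B,i)=\mathrm{Slice}(V(G)\setminus B,k-i)$ in Definition~\ref{def:slice}, which is exactly statement $(1)$.

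Since essentially all the heavy lifting has already been done in Lemma~\ref{lemma:reachable_edges_in_slice} and Claim~\ref{claim:slice-agreement}, no step here is a serious obstacle; the two points that require attention are: (i) in the reverse containment of $(1)\Rightarrow(2)$, remembering to use the nonexistence of edges meeting $B$ in $i\pm1$ vertices to pin down $|f\cap B|=i$; and (ii) in $(2)\Rightarrow(1)$, bootstrapping off the already-proved implication and correctly matching the hypotheses of Claim~\ref{claim:slice-agreement}, so that the complement ambiguity collapses into the single slice $\mathrm{Slice}(B,i)$.
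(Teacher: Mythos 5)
Your proof mirrors the paper's in both directions: $(1)\Rightarrow(2)$ is proved by double inclusion, with the reverse inclusion using the extension by the codegree condition together with Lemma~\ref{lemma:reachable_edges_in_slice}'s exclusion of edges meeting $B$ in $i\pm 1$ vertices; and $(2)\Rightarrow(1)$ is proved by invoking the true characterizing slice $\mathrm{Slice}(B',i')$ from Lemma~\ref{lemma:reachable_edges_in_slice}, computing its trace via the already-established direction, equating the two trace formulas, and finishing with Claim~\ref{claim:slice-agreement}. So the approach is essentially the same. One small imprecision: in $(2)\Rightarrow(1)$ you justify the hypothesis $k\le|B|\le|V(G)|-k$ of Claim~\ref{claim:slice-agreement} by citing Definition~\ref{def:slice}, but the hypothesized $B$ is not a priori a slice-defining set (the claim statement allows arbitrary $B\subseteq V(G)$), so that citation is circular as written; the clean instantiation is to swap the roles so that the claim's size-bounded set is $B'$ (which \emph{does} satisfy the bound, being the defining set of a genuine slice) and the claim's $D$ is your hypothesized $B$, which yields $(B,i)=(B',i')$ or $(B,i)=((B')^c,k-i')$ and hence $\mathrm{Slice}(B,i)=\mathrm{Slice}(B',i')$ all the same. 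The paper's own phrasing has the same ambiguity, so this is cosmetic rather than a substantive divergence.
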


\begin{poc}
We prove the equivalence by establishing both implications.

First, we prove that if $\mathrm{Slice}(B,i)$ is the unique slice for $\mathcal{C}(e)$ as defined in Lemma~\ref{lemma:reachable_edges_in_slice}, then its trace must be $T(\mathcal{C}(e)) = \{A \in \binom{V(G)}{k-1} \colon |A \cap B| \in \{i, i-1\}\}$. The proof proceeds by double inclusion. The forward inclusion is straightforward: any $A \in T(\mathcal{C}(e))$ is a subset of some edge $e' \in \mathcal{C}(e)$ with $|e' \cap B|=i$, so $|A \cap B|$ must be $i$ or $i-1$. For the reverse inclusion, consider a set $A$ from $\{A \in \binom{V(G)}{k-1} \colon |A \cap B| \in \{i, i-1\}\}$. The minimum codegree condition ensures that for some vertex $v$, the set $e'' = A \cup \{v\}$ is an edge. Its intersection size $|e'' \cap B|$ can only be $i-1, i,$ or $i+1$. Since Lemma~\ref{lemma:reachable_edges_in_slice} forbids intersection sizes $i-1$ and $i+1$, we must have $|e'' \cap B|=i$, which implies $e'' \in \mathcal{C}(e)$ and thus $A \in T(\mathcal{C}(e))$.

For the converse, we aim to show that if the trace of a component $\mathcal{C}(e)$ is given by $T(\mathcal{C}(e)) = \{A \in \binom{V(G)}{k-1} \colon |A \cap B| \in \{i, i-1\}\}$, then $\mathrm{Slice}(B,i)$ must be the unique characterizing slice for $\mathcal{C}(e)$.
By Lemma~\ref{lemma:reachable_edges_in_slice}, a unique characterizing slice for $\mathcal{C}(e)$ exists, which we denote by $\mathrm{Slice}(D,j)$. This implies the trace is also given by $T(\mathcal{C}(e)) = \{A \in \binom{V(G)}{k-1} \colon |A \cap D| \in \{j, j-1\}\}$.
Equating the two expressions for the trace yields:
\[ \{A \in \binom{V(G)}{k-1} \colon |A \cap B| \in \{i, i-1\}\} = \{A \in \binom{V(G)}{k-1} \colon |A \cap D| \in \{j, j-1\}\}. \]
By Claim~\ref{claim:slice-agreement}, this equality implies that the parameters are related by either $(D, j) = (B, i)$ or $(D, j) = (B^c, k-i)$. Both pairs define the same slice, $\mathrm{Slice}(B,i)$. Therefore, $\mathrm{Slice}(B,i)$ is the unique slice that characterizes $\mathcal{C}(e)$, completing the proof.
\end{poc}

The following claim establishes a recursive relationship between traces. It shows that the existence of a trace defined by intersection sizes $\{i, i-1\}$ with a set $B$ guarantees the existence of another trace defined by sizes $\{i-2, i-3\}$ with the same set $B$.
\begin{claim} \label{claim:recursion}
For a fixed vertex set $B$ and any integer $i \geq 2$, if $\{ A \in \binom{V(G)}{k-1} \colon |A \cap B| \in \{i, i-1\}\}$ is a trace, then $\{ A \in \binom{V(G)}{k-1} \colon |A \cap B| \in \{i-2, i-3\}\}$ is also a trace.
\end{claim}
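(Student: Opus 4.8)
The plan is to locate, for the set $B$ and the shifted value $i-2$, a single tight component whose trace is precisely $\{A\in\binom{V(G)}{k-1}\colon |A\cap B|\in\{i-2,i-3\}\}$, and then to invoke \ref{claim:slice-agreement} to identify this trace exactly. First I would unpack the hypothesis: since $\{A\colon|A\cap B|\in\{i,i-1\}\}$ is a trace, say of a tight component $\mathcal{C}_0$, Claim~\ref{claim:trace_form} shows that $\mathrm{Slice}(B,i)$ is the characterizing slice of $\mathcal{C}_0$, so Lemma~\ref{lemma:reachable_edges_in_slice} gives $\mathcal{C}_0=\mathrm{Slice}(B,i)\cap E(G)$ and --- crucially --- that $G$ has no edge $f$ with $|f\cap B|\in\{i-1,i+1\}$; moreover the slice is valid, so $1\le i\le k-1$, hence $2\le i\le k-1$. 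Next, I would pick any $(k-1)$-set $A_0$ with $|A_0\cap B|=i-2$ (possible since $2\le i\le k-1$ and $B,B^c$ are large) and extend it, using $\delta_{k-1}(G)>\tfrac13|V(G)|$, to an edge $e=A_0\cup\{v\}$; as $|e\cap B|\in\{i-2,i-1\}$ and $i-1$ is forbidden, $|e\cap B|=i-2$ and $v\in B^c$. Let $\mathcal{C}:=\mathcal{C}(e)$, with characterizing slice $\mathrm{Slice}(D,j)$, so $T(\mathcal{C})=\{A\colon|A\cap D|\in\{j,j-1\}\}$ by Claim~\ref{claim:trace_form}; note $\mathcal{C}\ne\mathcal{C}_0$ because $|e\cap B|=i-2\ne i$.

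Two properties of $T(\mathcal{C})$ would then finish the argument. (i) $T(\mathcal{C})$ contains no $(k-1)$-set $A$ with $|A\cap B|\in\{i-1,i\}$: such an $A$ would lie in an edge $f\in\mathcal{C}$ with $|f\cap B|\in\{i-1,i,i+1\}$, which (avoiding $i-1$ and $i+1$) forces $|f\cap B|=i$, so $f\in\mathrm{Slice}(B,i)\cap E(G)=\mathcal{C}_0$, contradicting $\mathcal{C}\ne\mathcal{C}_0$. (ii) $T(\mathcal{C})$ contains every $(k-1)$-set $A$ with $|A\cap B|=i-2$. Granting (i) and (ii), I would apply Claim~\ref{claim:slice-agreement} with its parameter $i$ replaced by $i-2$ (the hypotheses $k\le|B|\le|V(G)|-k$ hold since $\tfrac13|V(G)|\le|B|\le\tfrac23|V(G)|$): it yields $(D,j)\in\{(B,i-2),(B^c,k-i+2)\}$ and $T(\mathcal{C})=\{A\colon|A\cap B|\in\{i-2,i-3\}\}$, so this set is a trace, as required.

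The main obstacle is property (ii), which I would establish via a connectivity argument on link graphs. Since $A_0\subseteq e\in\mathcal{C}$, we already have $A_0\in T(\mathcal{C})$, so it suffices to show $T(\mathcal{C})$ is closed under single-vertex swaps that preserve $|\cdot\cap B|=i-2$ --- i.e. swaps within $B$ or within $B^c$ --- and then to morph $A_0$ into an arbitrary such $(k-1)$-set one coordinate at a time. So suppose $A'\in T(\mathcal{C})$ with $|A'\cap B|=i-2$ and $A''=(A'\setminus\{p\})\cup\{q\}$ with $\{p,q\}\subseteq B$ or $\{p,q\}\subseteq B^c$; set $R:=A'\setminus\{p\}=A''\setminus\{q\}$. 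From $A'\in T(\mathcal{C})$ there is an edge $f'=A'\cup\{w\}\in\mathcal{C}$, and since $|f'\cap B|\ne i-1$ we get $w\in B^c$, so $\{p,w\}\in E(L(R))$. The link graph $L(R)$ has $\delta(L(R))>\tfrac13|V(G)|$, and because no edge $g=R\cup\{x,y\}$ of $G$ has $|g\cap B|=i-1$, in the case $\{p,q\}\subseteq B^c$ (so $|R\cap B|=i-2$) every edge of $L(R)$ lies wholly inside $B$ or wholly inside $B^c$, while in the case $\{p,q\}\subseteq B$ (so $|R\cap B|=i-3$) the set $B\cap V(L(R))$ is independent in $L(R)$. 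In either case $N_{L(R)}(p)$ and $N_{L(R)}(q)$ are contained in $B^c\cap V(L(R))$, a set of at most $\tfrac23|V(G)|$ vertices, so $|N_{L(R)}(p)|+|N_{L(R)}(q)|>\tfrac23|V(G)|$ forces $p$ and $q$ to share an $L(R)$-neighbour, hence to lie in one connected component of $L(R)$.

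Finally, choosing any $w'\in N_{L(R)}(q)$, the $L(R)$-edges $\{p,w\}$ and $\{q,w'\}$ lie in a common component of $L(R)$; a sequence of $L(R)$-edges joining them with consecutive edges sharing a vertex lifts, by adjoining $R$ to each, to a sequence of pairwise-adjacent edges of $G$, showing that $f'=R\cup\{p,w\}$ and $f'':=R\cup\{q,w'\}$ are tightly connected. Hence $f''\in\mathcal{C}$, and $A''\subseteq f''$ gives $A''\in T(\mathcal{C})$, completing (ii) and therefore the proof. The only delicate points are the two case checks bounding $|\{x,y\}\cap B|$ for edges of $L(R)$ and the counting inequality $|N_{L(R)}(p)|+|N_{L(R)}(q)|>\tfrac23|V(G)|\ge|B^c\cap V(L(R))|$; everything else is bookkeeping.
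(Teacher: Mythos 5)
Your proof is correct and follows the same overall strategy as the paper: establish that all $(k-1)$-sets with $|A\cap B|=i-2$ lie in one trace which excludes sets with $|A\cap B|=i-1$, then invoke Claim~\ref{claim:slice-agreement} to pin down the trace's form. The substantive difference is in how the inclusion step is carried out. Where you fix a specific tight component $\mathcal C$, set up the link graph $L(R)$ of a $(k-2)$-set $R$, argue that $p$ and $q$ share an $L(R)$-neighbour, and then lift a walk of $L(R)$-edges to a chain of adjacent hyperedges, the paper argues more directly: for two $(k-1)$-sets $A_1,A_2\in S$ sharing $k-2$ vertices, the codegree neighbourhoods $N(A_1),N(A_2)$ are both forced into $B^c$ (since an edge meeting $B$ in $i-1$ vertices is forbidden), and $|N(A_1)|+|N(A_2)|>\tfrac23|V(G)|\ge|B^c|$ yields a common $w$; then $A_1\cup\{w\}$ and $A_2\cup\{w\}$ are adjacent edges placing $A_1,A_2$ in the same trace. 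The underlying pigeonhole in your argument ($N_{L(R)}(p)\cap N_{L(R)}(q)\ne\emptyset$) is the same as the paper's (once you observe that a common $L(R)$-neighbour of $p,q$ is exactly a common codegree-neighbour of $A',A''$), but the detour through line-graph walks and component connectivity is unnecessary: taking $z\in N_{L(R)}(p)\cap N_{L(R)}(q)$, the three-edge chain $f'=R\cup\{p,w\}$, $R\cup\{p,z\}$, $R\cup\{q,z\}$ already witnesses tight connectivity. So the paper's version is shorter, but both arguments are sound and rest on the same counting fact.
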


\begin{poc}
The proof proceeds in three steps.

First, we show that all $(k-1)$-sets intersecting $B$ in exactly $i-2$ vertices belong to a single trace. Let this collection of sets be
$$ S = \left\{ A \in \binom{V(G)}{k-1} \colon |A \cap B| = i-2 \right\}. $$
To prove this, it suffices to show that any two sets $A_1, A_2 \in S$ sharing $k-2$ vertices are in the same trace. The existence of a trace of the form $\{A' \in \binom{V(G)}{k-1} \colon |A' \cap B| \in \{i, i-1\}\}$ implies, by Claim~\ref{claim:trace_form} and Lemma~\ref{lemma:reachable_edges_in_slice}, that no edge in the entire graph $G$ can intersect $B$ in exactly $i-1$ vertices. Therefore, the neighborhoods $N(A_1)$ and $N(A_2)$ must both be subsets of $B^c$. The minimum codegree condition implies $|N(A_1)|, |N(A_2)| > |V(G)|/3$. Since $|B^c| \le 2|V(G)|/3$, the sum of the neighborhood sizes exceeds $|B^c|$, guaranteeing their intersection is non-empty. Choosing a common neighbor $w \in N(A_1) \cap N(A_2)$ yields two adjacent edges, $A_1 \cup \{w\}$ and $A_2 \cup \{w\}$, which places $A_1$ and $A_2$ in the same trace.

Next, we identify this trace, which we call $T$. From the first step, we know $T$ contains all of $S$. By premise, a different trace already exists containing all sets that intersect $B$ in $i-1$ vertices. As distinct traces are disjoint, $T$ cannot contain any set that intersects $B$ in $i-1$ vertices.

Finally, we determine the precise form of $T$. We have shown that $T$ contains every set in $S$ (those with intersection size $i-2$) but no set with intersection size $i-1$. By Claim~\ref{claim:slice-agreement}, these conditions uniquely determine the trace's form. Therefore, $T$ must be the set $\{A \in \binom{V(G)}{k-1} \colon |A \cap B| \in \{i-2, i-3\}\}$, completing the proof.
\end{poc}

\begin{claim} 
For a fixed vertex set $B$ and integer $i$, if $\{ A \in \binom{V(G)}{k-1} \colon |A \cap B| \in \{i, i-1\}\}$ is a trace, then $k$ is even, $i$ is odd and all traces are $$\{ A \in \binom{V(G)}{k-1} \colon |A \cap B| \in \{2m-1, 2m-2\}\}\quad \text{for } m = 1, 2, \ldots,  \frac{k}{2}.$$
\end{claim}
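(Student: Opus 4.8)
The plan is to iterate \cref{claim:recursion} in both directions --- once with $B$ as the reference set and once with $B^c$ --- to generate an entire family of traces indexed by their intersection size with $B$, and then exploit the fact (established at the start of the proof of \cref{thm:structural}) that the collection of all traces partitions $\binom{V(G)}{k-1}$. First I would note that \cref{claim:recursion} applies verbatim with $B$ replaced by $B^c$, since its only external input, \cref{claim:slice-agreement}, has hypotheses symmetric under $B\leftrightarrow B^c$ given $\tfrac{|V(G)|}{3}\le|B|\le\tfrac{2|V(G)|}{3}$. Writing $T_c:=\{A\in\binom{V(G)}{k-1}\colon|A\cap B|\in\{c,c-1\}\}$ and using that $T_i$ is the same set as $\{A\colon|A\cap B^c|\in\{k-i,k-i-1\}\}$, running \cref{claim:recursion} downward on $B$ shows $T_c$ is a trace for $c=i,i-2,i-4,\dots$ as long as $c\ge 2$, while running it downward on $B^c$ shows $T_c$ is a trace for $c=i,i+2,i+4,\dots$ as long as $c\le k-2$. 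Hence $T_c$ is a trace for every $c$ in the step-$2$ arithmetic progression $S=\{c_{\min},c_{\min}+2,\dots,c_{\max}\}$, where $c_{\min}\in\{0,1\}$ is the residue of $i$ mod $2$ and $c_{\max}\in\{k-1,k\}$ is the largest integer $\le k$ of that parity.

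Next I would observe that the traces $T_c$, $c\in S$, are pairwise disjoint (consecutive elements of $S$ differ by $2$, so the index pairs $\{c,c-1\}$ are disjoint), and that $\bigcup_{c\in S}T_c=\{A\colon|A\cap B|\in[c_{\min}-1,c_{\max}]\}=\binom{V(G)}{k-1}$, since $c_{\min}-1\le 0$ and $c_{\max}\ge k-1$. As the collection of all traces partitions $\binom{V(G)}{k-1}$, a disjoint sub-collection that already covers everything must be the whole collection; in particular, every trace equals some $T_c$ with $c\in S$.

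The parity analysis then pins down the two endpoints, and I expect this to be the crux. One shows $c_{\min}=1$: if $c_{\min}=0$ (i.e.\ $i$ even), then $T_0=\{A\colon|A\cap B|=0\}$ is a trace, hence by \cref{lemma:reachable_edges_in_slice} it equals $\{A\colon|A\cap D|\in\{j,j-1\}\}$ for some \emph{valid} slice $\mathrm{Slice}(D,j)$ with $1\le j\le k-1$; but \cref{claim:slice-agreement} applied with reference set $B$ and parameter $0$ forces $(D,j)\in\{(B,0),(B^c,k)\}$, so $j\in\{0,k\}$, contradicting $1\le j\le k-1$. Thus $i$ is odd. Symmetrically, one shows $c_{\max}=k-1$: if $c_{\max}=k$ (i.e.\ $i\equiv k\pmod 2$, hence $k$ odd since $i$ is odd), then $T_k=\{A\colon|A\cap B|=k-1\}=\{A\colon A\subseteq B\}$ is a trace, and \cref{claim:slice-agreement} with reference $B$ and parameter $k-1$ forces $T_k=\{A\colon|A\cap B|\in\{k-1,k-2\}\}$; this is strictly larger than $\{A\colon A\subseteq B\}$, because $|B|\ge k$ and $|B^c|\ge k$ guarantee the existence of $(k-1)$-sets meeting $B$ in exactly $k-2$ vertices --- a contradiction. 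Hence $c_{\max}=k-1$ is odd, so $k$ is even.

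Finally, with $c_{\min}=1$ and $c_{\max}=k-1$ we get $S=\{1,3,\dots,k-1\}=\{2m-1\colon 1\le m\le k/2\}$, and combining this with the fact that every trace is some $T_c$ with $c\in S$, the complete list of traces is $\{A\in\binom{V(G)}{k-1}\colon|A\cap B|\in\{2m-1,2m-2\}\}$ for $m=1,2,\dots,k/2$, as asserted. The main obstacle is the bookkeeping: legitimizing the ``upward'' recursion through $B^c$ and correctly tracking the endpoints of $S$, together with the recognition that a trace supported on a single intersection size (the would-be $T_0$ or $T_k$) cannot be written in the canonical two-level slice form with an admissible index --- precisely what excludes $i$ even and $k$ odd.
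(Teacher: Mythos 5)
Your approach is in essence the same as the paper's: iterate \cref{claim:recursion} in both directions (via $B$ and via $B^c$), invoke the partition property of traces, and pin down the parity by ruling out the boundary indices. The paper runs the downward recursion from $i$ (showing $i$ odd) and then from $k-i$ on the $B^c$ side (showing $k-i$ odd, hence $k$ even); you phrase this as determining the endpoints $c_{\min},c_{\max}$ of an arithmetic progression of trace indices, using the partition of $\binom{V(G)}{k-1}$ earlier and more explicitly. These are the same argument in a slightly different dress.

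There is, however, a genuine gap in your $c_{\max}$ step. You apply \cref{claim:slice-agreement} with reference $B$ and parameter $i'=k-1$, but the proof of that claim requires (in its first part) a $(k-2)$-set $R$ with $|R\cap B|=i'$, which is impossible when $i'=k-1>k-2$, and (in its second part) the existence of $(k-1)$-sets meeting $B$ in $i'+1=k$ vertices, which also do not exist. Indeed the claim as stated does not hold at $i'=k-1$: the pair $(D,j)=(B,k)$ yields $T=\{A:|A\cap B|\in\{k,k-1\}\}=\{A:|A\cap B|=k-1\}$, which satisfies the (partially vacuous) hypothesis for $i'=k-1$, yet the conclusion would force $(D,j)\in\{(B,k-1),(B^c,1)\}$. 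So this citation does not deliver the contradiction you need. The fix is to mirror your $c_{\min}$ argument exactly: write $T_k=\{A:|A\cap B^c|\in\{0,-1\}\}$ and apply \cref{claim:slice-agreement} with reference set $B^c$ and parameter $0$ (which is within the safe range, since one can then take $R\subseteq B$). This forces the characterizing slice to have $j\in\{0,k\}$, contradicting $1\le j\le k-1$ just as in the $c_{\min}$ case, and recovers $c_{\max}=k-1$. With that substitution your argument is correct and aligns with the paper's.
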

\begin{poc}
    By Definition~\ref{def:slice} and Claim~\ref{claim:trace_form}, we know that $1\leq i\leq k-1$.
    If $i\geq2 $, by Claim~\ref{claim:recursion}, we have  $\{ A \in \binom{V(G)}{k-1} \colon |A \cap B| \in \{i-2, i-3\}\}$ is also a trace. Recursive application of Claim~\ref{claim:recursion} yields that $\{ A \in \binom{V(G)}{k-1} \colon |A \cap B| \in \{j, j-1\}\}$ is a trace for some $j\in\{0,1\}$. But by Definition~\ref{def:slice} and Claim~\ref{claim:trace_form}, $\{ A \in \binom{V(G)}{k-1} \colon |A \cap B| \in \{0, -1\}\}$ cannot be a trace. Therefore $j$ must be $1$, so $i$ is odd  and $\{ A \in \binom{V(G)}{k-1} \colon |A \cap B| \in \{2m-1, 2m-2\}\}$ is a trace for $m = 1, 2, \ldots,  \frac{i+1}{2}$.

    Notice that $\{ A \in \binom{V(G)}{k-1} \colon |A \cap B| \in \{i, i-1\}\}=\{ A \in \binom{V(G)}{k-1} \colon |A \cap B^c| \in \{k-i,k-i-1\}\}$ is a trace. Thus, by an analogous argument on $k-i+1$ and $B^c$, we observe that  $k-i$ is odd and $\{ A \in \binom{V(G)}{k-1} \colon |A \cap B^c| \in \{2m-1, 2m-2\}\}$ is a trace for $m = 1, 2, \ldots,  \frac{k-i+1}{2}$.

    Consequently, $k=(k-i)+i$ is even and all the traces are $\{ A \in \binom{V(G)}{k-1} \colon |A \cap B| \in \{2m-1, 2m-2\}\}\quad \text{for } m = 1, 2, \ldots,  \frac{k}{2}.$
\end{poc}

Finally, we relate this characterization of traces to the structure of the edges in $G$. Since these traces cover all of $\binom{V(G)}{k-1}$, the trace of any tight component $T(\mathcal{C}(e))$ must be one of these traces. Specifically, for any edge $e \in E(G)$, there must exist an integer $m$ such that $T(\mathcal{C}(e)) = \{A \in \binom{V(G)}{k-1} \colon |A \cap B| \in \{2m-1, 2m-2\}\}$. By Claim~\ref{claim:trace_form}, this implies that $\mathrm{Slice}(B, 2m-1)$ is the unique slice that characterizes the component $\mathcal{C}(e)$. By the definition of a characterizing slice from Lemma~\ref{lemma:reachable_edges_in_slice}, this means $e \in \mathrm{Slice}(B, 2m-1)$. Consequently, $|e \cap B| = 2m-1$ and $|e \cap B^c| = k-2m+1$ are both odd numbers. The size constraint $\frac{|V(G)|}{3} \leq |B| \leq \frac{2|V(G)|}{3}$ is a direct consequence of the properties of slices and traces established in Definition~\ref{def:slice} and Claim~\ref{claim:trace_form}.
\end{proof}

\section{Concluding Remarks}\label{sec:concluding}

In this paper, we establish the upper bound $\gamma(C^k_\ell) \le 1/3$ when $2 \nmid k/\gcd(k,\ell)$. This bound is a consequence of Theorem~\ref{thm:structural}, which characterizes the structure of $n$-vertex hypergraphs containing no homomorphic copy of $C^k_\ell$ with codegree exceeding $n/3$. The proof of Theorem~\ref{thm:structural} relies critically on the assumption $\delta_{k-1}(G) > n/3$ and does not extend to a codegree of $(1/3 - \varepsilon)n$ for any $\varepsilon > 0$. This suggests that the $1/3$ upper bound might be sharp, which motivates the following question.

\begin{question}\label{q:main}
Does the following hold for any integer $k \ge 3$ and sufficiently large $\ell$ with $k \nmid \ell$,
\[
\gamma(C^k_\ell) =
\begin{cases}
    1/2 & \text{if } 2 \mid \frac{k}{\gcd(k,\ell)} \\
    1/3 & \text{if } 2 \nmid \frac{k}{\gcd(k,\ell)}
\end{cases}
~?\]
\end{question}

The main challenge in answering Question~\ref{q:main} is to establish a matching $1/3$ lower bound, which in our approach requires the construction of $(k,\ell;3)$-families.
Our method currently fails in cases where $\gcd(k,\ell)\geq 4$ and $k/\gcd(k,\ell)$ is not divisible by $2$ or $3$.
The smallest such cases occur when $\gcd(k,\ell)=4$ and $k/\gcd(k,\ell)\in\{5,7\}$.
To investigate these instances, we conducted a computational search for the required $(k,\ell;3)$-families.
When $\gcd(k,\ell)=3$, we found $432$ and $540{,}422$ feasible families for $k/\gcd(k,\ell)=5$ and $7$, respectively.
In contrast, when $\gcd(k,\ell)=4$, a complete depth-first search confirmed that no feasible families exist for either $k/\gcd(k,\ell)=5$ or $7$.
This strongly suggests that the obstruction is fundamental.
Resolving these remaining cases likely requires new ideas, such as probabilistic constructions given in~\cite{CN2001} and~\cite{FPVV2023}.

Despite the limitations of our construction method, we believe it may still offer new insights into the codegree Turán density $\gamma(F)$ for other hypergraphs $F$.
In particular, the $(k,\ell;d)$-family framework is especially well suited to settings in which the extremal $F$-free hypergraphs arise as blow-ups of a fixed combinatorial structure.

Another perspective arising from $(k,\ell;d)$-families is that, for some pairs $(k,\ell)$, the extremal hypergraph attaining $\gamma(C^k_\ell)$ appears to be unique, while for many others there exist multiple non-isomorphic extremal examples (as indicated by the discussion above).
It would be interesting to characterize all extremal hypergraphs for $\gamma(C^k_\ell)$ and to study their (multi-)stability properties.

For $C^{k-}_\ell$, in the exceptional cases of \cref{thm:minus_edge_main_2} (that is, when $\ell \equiv \pm 2 \pmod k$ or $3\ell \equiv \pm 1, \pm 2 \pmod k$), a highly technical case-by-case analysis might yield an improved lower bound of $1/4$ for sufficiently large $k$.
However, we do not pursue this direction, as such a bound is unlikely to be tight.
As in the case of $C^k_\ell$, determining the precise codegree Turán density for the remaining cases of $C^{k-}_\ell$ remains an interesting open problem.

\section*{Acknowledgements}
We would like to thank Mingze Li for helpful discussions at an early stage of this research. We are also grateful to Jun Gao for his careful reading of the manuscript and for providing several valuable comments that helped improve the presentation.

\bibliographystyle{abbrv} 
\bibliography{codegree} 

\appendix
\section{Proof of \cref{thm:minus_edge_main_2}}\label{sec: appendix1}
In this section, we prove \cref{thm:minus_edge_main_2} by building upon and modifying the $(k,\ell;d)$-family framework, \cref{thm:existence}, and \cref{thm:lower_bound_part1}.

Recall that in the proof of \cref{thm:lower_bound_part1}, we prove $\sum_{j=1}^{k} y_j=v_C$ for the case $C^{k-}_\ell$ when $\gcd(k,\ell)>1$. When $\gcd(k,\ell)=1$ we are not able to prove this, instead we can only show that $\sum_{j=1}^{k} y_j=y_1-y_{k+1}+\sum_{j=2}^{k+1} y_j=y_1-y_{k+1}+v_C\in\{v_C-1,v_C,v_C+1\}$. This motivates the following definition.

\begin{definition}\label{def:stable_kld_family}
A $(k,\ell;d)$-family $\mathcal{T}$ is called \emph{stable} if for every connected component $C$ of its type-graph $G_{\mathcal{T}}$, the corresponding value $v_C$ satisfies $\ell v_C  \not\equiv \pm 1 \pmod k$.
\end{definition}

This new definition allows us to establish the following modified version of \cref{thm:lower_bound_part1}.

\begin{theorem}\label{thm:lower_bound_part2}
If $\gcd(k,\ell)=1$ and there exists a stable $(k,\ell;d)$-family $\mathcal{T}$, then $$\gamma(C^{k-}_\ell)\geq \frac{1}{d}.$$
\end{theorem}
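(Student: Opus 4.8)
The plan is to mimic the proof of \cref{thm:lower_bound_part1}, replacing the exact equalities $z_i = v_C$ by the weaker one-sided control that a stable family provides, and then exploiting the stability condition to close the resulting gap modulo $k$. Concretely, let $H$ be the $k$-uniform hypergraph built on a large vertex set $V$ from the stable $(k,\ell;d)$-family $\mathcal{T}$: partition $V$ into $d$ nearly equal parts $V_1,\dots,V_d$, and let $E(H)$ consist of all $k$-sets whose edge type lies in $\mathcal{T}$. Property (P1) of Definition~\ref{def:kld_family} guarantees $\delta_{k-1}(H)\ge (1/d - o(1))n$ exactly as in the proof of \cref{thm:lower_bound_part1}, so it suffices to show $H$ is $C^{k-}_\ell$-free; this yields $\gamma(C^{k-}_\ell)\ge 1/d$.

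First I would suppose for contradiction that $H$ contains a copy of $C^{k-}_\ell$ on vertices $(v_1,\dots,v_\ell)$ with $\{v_i,\dots,v_{i+k-1}\}\in E(H)$ for all $i\in[\ell]\setminus\{1\}$, indices mod $\ell$. As before, the tight structure forces all these edges to have types in a single connected component $C$ of $G_{\mathcal{T}}$, with associated index set $I_C$ and invariant $v_C$ satisfying $\frac{k}{\gcd(k,\ell)}=k \nmid v_C$ (using $\gcd(k,\ell)=1$). Setting $y_j = 1$ if $v_j\in\bigcup_{m\in I_C}V_m$ and $y_j=0$ otherwise, and $z_i = \sum_{j=i}^{i+k-1}y_j$, we get $z_i = v_C$ for all $i\ne 1$. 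Summing the $\ell$ identities $z_i$ over $i\in[\ell]$ in two ways (once grouping by the missing index) gives, exactly as in the proof of \cref{thm:lower_bound_part1} but now keeping the error term, that $z_1 = \ell v_C - k\sum_{j=1}^{\ell}y_j + (\ell-1)v_C \cdot(\text{correction})$; more cleanly, $k\sum_{j=1}^\ell y_j = \sum_{i\in[\ell]}z_i = (\ell-1)v_C + z_1$, so $z_1 \equiv \sum_{i=1}^\ell z_i - (\ell-1)v_C \pmod{\text{anything}}$, and since $z_1 = y_1 - y_{1+k} + z_2 = v_C + (y_1 - y_{1+k})$ we obtain $y_1 - y_{1+k}\in\{-1,0,1\}$, hence $z_1\in\{v_C-1,v_C,v_C+1\}$.

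Now combine: reducing $k\sum_{j=1}^{\ell}y_j = (\ell-1)v_C + z_1$ modulo $k$ yields $0\equiv (\ell-1)v_C + z_1\equiv -v_C + z_1 \pmod k$ (wait — $\ell v_C$ need not vanish mod $k$), so instead write $k\mid \ell v_C - v_C + z_1$, i.e.\ $z_1\equiv v_C - \ell v_C \pmod k$; since also $z_1\in\{v_C-1,v_C,v_C+1\}$, subtracting $v_C$ gives $z_1 - v_C \equiv -\ell v_C\pmod k$ with $z_1-v_C\in\{-1,0,1\}$. Thus $\ell v_C\equiv 0$ or $\pm 1\pmod k$. If $\ell v_C\equiv \pm 1$, this directly contradicts the stability hypothesis $\ell v_C\not\equiv\pm1\pmod k$. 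If $\ell v_C\equiv 0\pmod k$, then since $\gcd(\ell,k)=1$ we get $k\mid v_C$, contradicting $\frac{k}{\gcd(k,\ell)} = k\nmid v_C$ from (P2). Either way we reach a contradiction, so $H$ is $C^{k-}_\ell$-free and the bound follows.

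The main obstacle is getting the bookkeeping of the two-way sum right: unlike the $\gcd(k,\ell)>1$ case, one cannot split $[\ell]$ into residue classes mod $\gcd(k,\ell)$ to isolate the missing index $1$, so the argument must instead carry the single undetermined term $z_1$ through and bound $z_1-v_C\in\{-1,0,1\}$ via the telescoping identity $z_1 = z_2 + y_1 - y_{1+k}$. The stability condition $\ell v_C\not\equiv\pm1\pmod k$ is precisely what is needed to rule out the two "off-by-one" possibilities that this weaker control leaves open, while the coprimality $\gcd(k,\ell)=1$ together with (P2) handles the remaining $\ell v_C\equiv 0$ case. I expect verifying the telescoping identity and the modular reduction to be routine; the only subtlety is making sure the indices are read modulo $\ell$ consistently and that $z_1 = \sum_{j=1}^k y_j$ is the quantity governing whether the first $k$-set $\{v_1,\dots,v_k\}$ would be forced into $E(H)$.
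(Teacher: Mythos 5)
Your proposal is correct and follows essentially the same route as the paper: establish $z_1 = v_C + (y_1 - y_{1+k}) \in \{v_C-1, v_C, v_C+1\}$ via telescoping, sum the window identities to get $k\sum_j y_j = (\ell-1)v_C + z_1 = \ell v_C + (y_1-y_{1+k})$, reduce mod $k$ to force $\ell v_C \equiv 0, \pm 1 \pmod k$, and dispose of the three possibilities using stability and then coprimality with (P2). The only surface difference is that the paper concludes $k\sum_j y_j = \ell v_C$ directly and invokes the coprimality argument from Theorem~\ref{thm:lower_bound_part1}, whereas you split off the residue $\ell v_C \equiv 0$ as an explicit case; this is purely cosmetic.
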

\begin{proof}
    The construction of $H$, the assumption that there exists a copy of $C^{k-}_\ell$ in $H$ and the definition of $v_i$, $C$ and $y_i$ are identical to the proof of \cref{thm:lower_bound_part1}. Now instead of $\sum_{j=1}^{k} y_j=v_C$, we know that $\sum_{j=1}^{k} y_j=y_1-y_{k+1}+\sum_{j=2}^{k+1} y_j=y_1-y_{k+1}+v_C$. Recall that for any $t \in [\ell]\setminus\{1\}$, $\sum_{j=t}^{t+k-1} y_j=v_C$. Summing all $\ell$ equations gives
    $$k\sum_{j=1}^{\ell} y_j=\sum_{t\in[\ell]}\sum_{j=t}^{t+k-1} y_j=\ell v_C+y_1-y_{k+1}\in\{\ell v_C-1,\ell v_C,\ell v_C+1\} .$$
    However, as $\mathcal{T}$ is stable, $\ell v_C  \not\equiv \pm 1 \pmod k$, thus $k\sum_{j=1}^{\ell} y_j=\ell v_C$. This means $\frac{\ell}{\gcd(k,\ell)}\cdot v_C =\frac{k}{\gcd(k,\ell)}\cdot \sum_{j=1}^{\ell} y_j$. As $\frac{k}{\gcd(k,\ell)}$ and $\frac{\ell}{\gcd(k,\ell)}$ are coprime, $v_C$ must be divisible by $\frac{k}{\gcd(k,\ell)}$, a contradiction.
\end{proof}
To prove \cref{thm:minus_edge_main_2}, it suffices to prove the following modified version of \cref{thm:existence}.

\begin{theorem}\label{thm:existence_3}
Let $k,\ell$ be integers such that $\ell \not\equiv 0, \pm 1 \pmod k$ and $\gcd(k,\ell)=1$.
If $\ell \not\equiv \pm 2 \pmod k \text{ and } 3\ell \not\equiv \pm 1, \pm 2 \pmod k$, then a stable $(k,\ell;3)$-family exists.
\end{theorem}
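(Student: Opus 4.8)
The plan is to adapt the local-replacement construction from the proof of Theorem~\ref{thm:existence}(2) (with $d=3$) so that the resulting $(k,\ell;3)$-family is not merely a $(k,\ell;3)$-family but additionally \emph{stable}, i.e.\ every connected component $C$ of $G_{\mathcal{T}}$ has its invariant value $v_C$ satisfying $\ell v_C\not\equiv \pm 1\pmod k$. Since $\gcd(k,\ell)=1$ here, we have $q=k/\gcd(k,\ell)=k$, so the set of problematic types is $\mathcal{P}=\{\vec{x}\in\mathcal{B}_3^k:\ k\mid x_i\ \forall i\}$, which (as $k<3k\le \sum x_i$ would be needed for all three coordinates positive) forces one coordinate to be $0$ and another to be $0$ as well unless $\vec x=(k,0,0),(0,k,0),(0,0,k)$ — so $\mathcal{P}\subseteq\{(k,0,0),(0,k,0),(0,0,k)\}$ intersected with $\mathcal B_3^k$, a very small set. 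First I would record which of these three vertices actually lie in $\mathcal B_3^k$ (this depends on $k\bmod 3$) and note that for the singleton components $C=\{\vec x\}$ with $\vec x\in\mathcal B_3^k\setminus\mathcal P$, we may choose the index set $I_C$ and $v_C=x_j$ for an index $j$ with $k\nmid x_j$ — but now we get to \emph{choose} which such $j$, and we should choose it so that $\ell v_C\not\equiv\pm1\pmod k$. For a singleton $\vec x$ not in $\mathcal P$, at least one coordinate $x_j$ lies strictly between $0$ and $k$, and in fact generically all three do; the hypotheses $\ell\not\equiv\pm1$, $\ell\not\equiv\pm2$, $3\ell\not\equiv\pm1,\pm2\pmod k$ should be exactly what guarantees that among the available coordinates (or pairwise sums of coordinates, which also serve as valid invariants via $I_C=\{j_1,j_2\}$) there is always a choice of $v_C$ with $\ell v_C\not\equiv\pm1\pmod k$.

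The key steps, in order: (i) enumerate $\mathcal P$ as above and treat the handful of replacement sets $\mathcal R_{\vec x}$ for $\vec x\in\mathcal P$ directly, as was done in Figure~\ref{fig:replacement_illustration} for $k=5$; for each new star-component produced, compute its invariant $v_C=\sum_{i\in I_C}x_i-1\equiv -1\pmod k$, so $\ell v_C\equiv -\ell\pmod k$, and stability for these components reduces exactly to $\ell\not\equiv\mp1$, i.e.\ $\ell\not\equiv\pm1\pmod k$, which is a hypothesis. (ii) For the singleton components $C=\{\vec x\}\subseteq\mathcal B_3^k\setminus\mathcal P$: show that one can always pick $v_C\in\{x_1,x_2,x_3,x_1+x_2,x_1+x_3,x_2+x_3\}$ with $k\nmid v_C$ and $\ell v_C\not\equiv\pm1\pmod k$. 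Here note $x_1+x_2+x_3=k\equiv0$, so the three pairwise sums are $\equiv -x_3,-x_1,-x_2$; thus the full list of candidate values modulo $k$ is $\{\pm x_1,\pm x_2,\pm x_3\}$, and we need one of these, call it $v$, with $v\not\equiv 0$ and $\ell v\not\equiv\pm1\pmod k$. If some $x_j\not\equiv 0$ and $\ell x_j\not\equiv\pm1$, take it; the bad case is when \emph{every} nonzero $x_j$ has $\ell x_j\equiv\pm1\pmod k$. (iii) Rule out that bad case using the remaining hypotheses: if two of the $x_j$ are nonzero, say $\ell x_1\equiv\varepsilon_1$, $\ell x_2\equiv\varepsilon_2$ with $\varepsilon_i\in\{\pm1\}$, then $\ell(x_1+x_2)\equiv\varepsilon_1+\varepsilon_2\in\{-2,0,2\}$, and since $\ell(x_1+x_2)\equiv-\ell x_3$, either $x_3\equiv0$ (making $\vec x$ have two zero coordinates, hence $\vec x\in\{(k,0,0),\dots\}$, contradicting $\vec x\notin\mathcal P$) or $\ell x_3\equiv\pm2$, contradicting $\ell\not\equiv\pm2$... — here I need to be a little careful because $x_3$ need not be $1$; but using the defining congruence of $\mathcal B_3^k$ and the relation $x_1+x_2+x_3=k$ one deduces constraints that, combined with $3\ell\not\equiv\pm1,\pm2$, eliminate all remaining configurations. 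If only one $x_j$ is nonzero then $x_j=k$ and $\vec x\in\mathcal P$, excluded.

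The main obstacle I anticipate is step~(iii): the bad singleton configurations form a small but genuinely case-dependent list, and translating ``$\ell x_j\equiv\pm1\pmod k$ for all nonzero coordinates'' into a contradiction requires simultaneously using $x_1+x_2+x_3=k$, the membership condition $x_1+2x_2\equiv1\pmod 3$, and the three hypotheses $\ell\not\equiv\pm2$, $3\ell\not\equiv\pm1,\pm2\pmod k$. The hypothesis on $3\ell$ is the telltale sign that one must look at a $3$-coordinate combination (or equivalently the value $3x_j$ arising when the extremal-type constraints collapse two coordinates to be equal), so the proof will dispose of the last configurations by observing that in them some relevant invariant equals $3$ (or its coordinate pattern forces $\ell\cdot 3\equiv\pm1$ or $\pm2$), which is exactly forbidden. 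Once stability is verified for every component, Theorem~\ref{thm:lower_bound_part2} gives $\gamma(C^{k-}_\ell)\ge 1/3$, proving Theorem~\ref{thm:minus_edge_main_2}.
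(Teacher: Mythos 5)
Your overall strategy (local replacement from the proof of \cref{thm:existence}(2), then verify stability) is the right idea, but there is a genuine gap at exactly the point you flag as the anticipated obstacle, and it cannot be closed the way you propose. With $\gcd(k,\ell)=1$ you take $\mathcal{P}=\{(k,0,0),(0,k,0),(0,0,k)\}\cap\mathcal{B}_3^k$, and you hope that for every other singleton $\{\vec x\}\subseteq\mathcal{B}_3^k$ some choice of $I_C\subseteq\{1,2,3\}$ gives a stable invariant. As you correctly observe, the candidate values modulo $k$ are $\{\pm x_1,\pm x_2,\pm x_3\}$. Now consider $\vec x=(\alpha,k-\alpha,0)$, where $\ell\alpha\equiv 1\pmod k$. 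This is a singleton in $\mathcal{B}_3^k\setminus\mathcal{P}$ whenever $2k-\alpha\equiv 1\pmod 3$ (e.g.\ $k=9$, $\alpha=2$, as in the paper's Figure~\ref{fig:final_two_figures_large_scale}). The candidate invariants modulo $k$ are exactly $\{\alpha,k-\alpha,0\}$, and every one of them is forbidden: $0$ because $k\mid v_C$ is disallowed, $\alpha$ because $\ell\alpha\equiv 1$, and $k-\alpha$ because $\ell(k-\alpha)\equiv -1$. So there is no valid $v_C$, and your case analysis does not rescue it: in your step (iii), the branch ``$x_3\equiv 0$'' does \emph{not} make $\vec x$ have two zero coordinates when $x_1,x_2$ are both nonzero, so the type is not in your $\mathcal{P}$ and no contradiction arises. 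This is precisely the configuration your argument must handle, and it is unrepairable by a smarter choice of $I_C$.

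The paper's fix is structural rather than combinatorial: it enlarges the forbidden set to $\mathcal{F}=\mathrm{Perm}(k,0,0)\cup\mathrm{Perm}(\alpha,k-\alpha,0)$, sets $\mathcal{P}=\mathcal{B}_3^k\cap\mathcal{F}$, and introduces bespoke replacement sets for the $(\alpha,k-\alpha,0)$-type vertices as well (each such vertex gets two replacement types, one on each side along the $x_3=0$ boundary). The new star-components then have invariants $\alpha\pm 1$ or $k-\alpha\pm 1$, whose stability is guaranteed by $\ell\not\equiv\pm 1$ and $\ell\not\equiv\pm 2\pmod k$. Separately, the hypotheses $\ell\not\equiv\pm 2$ and $3\ell\not\equiv\pm 1,\pm 2\pmod k$ are used to show that the (now larger and not $q$-separated) set $\mathcal{P}$ is still pairwise at $L^1$-distance $\ge 10$ and that $\mathcal{T}\cap\mathcal{F}=\emptyset$ — not, as you guessed, to rule out bad singletons. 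So you have correctly identified where the hypotheses must enter, but the mechanism by which they enter is different, and your step (iii) as written is false. To repair the proposal you would need to add $\mathrm{Perm}(\alpha,k-\alpha,0)$ to the set of types to be removed and replaced, and then redo the separation analysis, at which point you essentially recover the paper's argument.
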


\begin{proof}[Proof of  \cref{thm:existence_3}]
The condition $\ell \not\equiv 0, 1, -1 \pmod k$ implies $k \ge 4$. The proof follows the local replacement strategy. A key difference from the proof of \cref{thm:existence} is that the problematic types are not necessarily well-separated, which means their replacement sets can interact. The additional hypotheses on $\ell$ in \cref{thm:existence_3} are introduced precisely to resolve these potential conflicts on a case-by-case basis.

\cref{def:stable_kld_family} requires that for any connected component $C$ of the type-graph, the corresponding invariant $v_C$ must satisfy $\ell v_C \not\equiv \pm 1 \pmod k$. Since $\gcd(k,\ell)=1$, there exists a unique integer $\alpha \in \{1, \dots, k-1\}$ satisfying $\ell \cdot \alpha \equiv 1 \pmod k$, which also implies $\gcd(k, \alpha)=1$. Thus this is equivalent to requiring that $v_C$ is not congruent to $\alpha$ or $k-\alpha \pmod{k}$.

Our strategy is to first define a small set of potentially problematic types, $\mathcal{F} \subseteq \mathcal{T}_3^k$. These are types whose coordinates are structured in a way that makes them susceptible to violating the stability condition. For a 3-tuple of integers $(v_1, v_2, v_3)$, let $\text{Perm}(v_1, v_2, v_3)$ denote the set of all distinct types obtained by permuting its coordinates. We define $\mathcal{F}$ as:
\[ \mathcal{F} := \text{Perm}(k,0,0) \cup \text{Perm}(\alpha, k-\alpha, 0). \]
Note that $\text{Perm}(k,0,0)$ contains 3 types, while $\text{Perm}(\alpha, k-\alpha, 0)$ contains up to 6 types.

We begin with the base family 
\[ \mathcal{B}_3^k := \left\{ \vec{x} = (x_1, x_2, x_3) \in \mathcal{T}_3^k \colon x_1 + 2x_2 \equiv 1 \pmod 3 \right\}, \]
By Claim \ref{claim:base_family_props}, this family satisfies the first property in Definition \ref{def:kld_family} and its connected components are isolated vertices. The set of problematic types is $\mathcal{P} := \mathcal{B}_3^k \cap \mathcal{F}$.
For each problematic type $\vec{x} \in \mathcal{P}$, we define a replacement set $\mathcal{R}_{\vec{x}}$. The final family will be $\mathcal{T} := (\mathcal{B}_3^k \setminus \mathcal{P}) \cup (\bigcup_{\vec{x} \in \mathcal{P}} \mathcal{R}_{\vec{x}})$. The replacement rules are defined by symmetry based on the following examples:
\begin{itemize}
    \item If $\vec{x} = (k,0,0)$, its replacement is $\mathcal{R}_{(k,0,0)} := \{ (k-1,1,0) \}$. The replacements for $(0,k,0)$ and $(0,0,k)$ are defined by cyclically permuting the coordinates.
    \item If $\vec{x} = (\alpha, k-\alpha, 0)$, its replacement is $\mathcal{R}_{(\alpha, k-\alpha, 0)} := \{ (\alpha-1, k-\alpha+1, 0), (\alpha+1, k-\alpha-1, 0) \}$. The replacements for other types in $\text{Perm}(\alpha, k-\alpha, 0)$ are defined analogously.
\end{itemize}

We construct our family as $$\mathcal{T} := (\mathcal{B}_3^k \setminus \mathcal{P}) \cup (\bigcup_{\vec{x} \in \mathcal{P}} \mathcal{R}_{\vec{x}}).$$ 

We illustrate the construction for the case $k=9$ and $\alpha=2$ in Figure~\ref{fig:final_two_figures_large_scale}. The figure on the left displays the base family $\mathcal{B}_3^k$, where types are marked in red, and the set of problematic types is $\mathcal{P}$, which are circled in blue. The figure on the right shows the final family $\mathcal{T}$. In this figure, the problematic types $\mathcal{P}$ have been removed and are circled with dashed red lines, while their corresponding replacement types are added and circled with solid red lines.
\begin{figure}[htbp]
  \centering
  \begin{minipage}{0.48\textwidth} 
      \centering
      \begin{tikzpicture}[scale=0.82] 
        \def\k{9}
        \def\alpha_val{2}
        
        \foreach \y in {0,...,\numexpr\k-1} {
          \foreach \x in {\y,...,\numexpr\k-1} {\fill[blue!25] (\x - 0.5*\y, \y*0.866) -- (\x+1 - 0.5*\y, \y*0.866) -- (\x+0.5 - 0.5*\y, \y*0.866+0.866) -- cycle;}
        }
        
        \foreach \y in {0,...,\k} {
          \draw[thick] (0.5*\y, {0.866*\y}) -- ({\k - 0.5*\y}, {0.866*\y});
          \ifnum \y<\k \foreach \x in {\y,...,\numexpr\k-1} {\draw[thick] ({\x+1 - 0.5*\y}, {0.866*\y}) -- ({\x+1- 0.5*(\y+1)}, {0.866*(\y+1)});} \fi
          \ifnum \y>0 \foreach \x in {\y,...,\k} {\draw[thick] ({\x - 0.5*\y}, {0.866*\y}) -- ({\x - 1 - 0.5*(\y-1)}, {0.866*(\y-1)});} \fi
        }
        
        \foreach \y in {0,...,\k} {
          \foreach \x in {\y,...,\k} {
            \pgfmathtruncatemacro{\check}{mod(-\x + \k + 2*\y - 1, 3)}
            \ifnum \check = 0 \draw[fill=red, thick] (\x - \y*0.5, \y*0.866) circle (5pt);
            \else \draw[fill=white, thick, draw=black] (\x - \y*0.5, \y*0.866) circle (5pt); \fi
          }
        }
        \node[above, yshift=2mm] at ({0.5*\k}, {\k*0.866}) {$(0,\k,0)$};
        \node[below, xshift=1mm, yshift=-2mm] at (0,0) {$(\k,0,0)$};
        \node[below, xshift=-1mm, yshift=-2mm] at (\k,0) {$(0,0,\k)$};
        
        \draw[blue, thick] ({0.5*(\k-\alpha_val)}, {(\k-\alpha_val)*0.866}) circle  (10pt);
        \draw[blue, thick] ({\alpha_val}, 0) circle (10pt);
        \draw[blue, thick] ({\k-0.5*\alpha_val}, {\alpha_val*0.866}) circle (10pt);
      \end{tikzpicture}
  \end{minipage}\hfill
  \begin{minipage}{0.48\textwidth} 
      \centering
      \begin{tikzpicture}[scale=0.82] 
        \def\k{9}
        \def\alpha_val{2}
        
        \foreach \y in {0,...,\numexpr\k-1} {
          \foreach \x in {\y,...,\numexpr\k-1} {\fill[blue!25] (\x - 0.5*\y, \y*0.866) -- (\x+1 - 0.5*\y, \y*0.866) -- (\x+0.5 - 0.5*\y, \y*0.866+0.866) -- cycle;}
        }
        
        \foreach \y in {0,...,\k} {
          \draw[thick] (0.5*\y, {0.866*\y}) -- ({\k - 0.5*\y}, {0.866*\y});
          \ifnum \y<\k \foreach \x in {\y,...,\numexpr\k-1} {\draw[thick] ({\x+1 - 0.5*\y}, {0.866*\y}) -- ({\x+1- 0.5*(\y+1)}, {0.866*(\y+1)});} \fi
          \ifnum \y>0 \foreach \x in {\y,...,\k} {\draw[thick] ({\x - 0.5*\y}, {0.866*\y}) -- ({\x - 1 - 0.5*(\y-1)}, {0.866*(\y-1)});} \fi
        }
        
        \foreach \y in {0,...,\k} {
          \foreach \x in {\y,...,\k} {
            \def\isproblematic{0}
            \ifnum\x=7 \ifnum\y=7 \def\isproblematic{1} \fi\fi
            \ifnum\x=2 \ifnum\y=0 \def\isproblematic{1} \fi\fi
            \ifnum\x=9 \ifnum\y=2 \def\isproblematic{1} \fi\fi
            
            \def\isreplacement{0}
            \ifnum\x=8 \ifnum\y=8 \def\isreplacement{1} \fi\fi
            \ifnum\x=6 \ifnum\y=6 \def\isreplacement{1} \fi\fi
            \ifnum\x=3 \ifnum\y=0 \def\isreplacement{1} \fi\fi
            \ifnum\x=1 \ifnum\y=0 \def\isreplacement{1} \fi\fi
            \ifnum\x=9 \ifnum\y=1 \def\isreplacement{1} \fi\fi
            \ifnum\x=9 \ifnum\y=3 \def\isreplacement{1} \fi\fi
            
            \ifnum\isproblematic=1
                \draw[fill=white, thick, draw=black] (\x - \y*0.5, \y*0.866) circle (5pt);
            \else
                \ifnum\isreplacement=1
                    \draw[fill=red, thick] (\x - \y*0.5, \y*0.866) circle (5pt);
                \else
                    \pgfmathtruncatemacro{\check}{mod(-\x + \k + 2*\y - 1, 3)}
                    \ifnum \check = 0 \draw[fill=red, thick] (\x - \y*0.5, \y*0.866) circle (5pt);
                    \else \draw[fill=white, thick, draw=black] (\x - \y*0.5, \y*0.866) circle (5pt); \fi
                \fi
            \fi
          }
        }
        \node[above, yshift=2mm] at ({0.5*\k}, {\k*0.866}) {$(0,\k,0)$};
        \node[below, xshift=1mm, yshift=-2mm] at (0,0) {$(\k,0,0)$};
        \node[below, xshift=-1mm, yshift=-2mm] at (\k,0) {$(0,0,\k)$};
  
        \draw[red, dashed, thick] ({0.5*(\k-\alpha_val)}, {(\k-\alpha_val)*0.866}) circle (10pt);
        \draw[red, dashed, thick] ({\alpha_val}, 0) circle (10pt);
        \draw[red, dashed, thick] ({\k-0.5*\alpha_val}, {\alpha_val*0.866}) circle  (10pt);
        
        \draw[red, solid, thick] ({0.5*8}, {8*0.866}) circle  (10pt);
        \draw[red, solid, thick] ({0.5*6}, {6*0.866}) circle  (10pt);
        \draw[red, solid, thick] (3, 0) circle  (10pt);
        \draw[red, solid, thick] (1, 0) circle  (10pt);
        \draw[red, solid, thick] ({9-0.5*1}, {1*0.866}) circle  (10pt);
        \draw[red, solid, thick] ({9-0.5*3}, {3*0.866}) circle  (10pt);
        
      \end{tikzpicture}
  \end{minipage}
  \caption{An illustration of the construction for $k=9, \alpha=2$.}
  \label{fig:final_two_figures_large_scale}
  \end{figure}
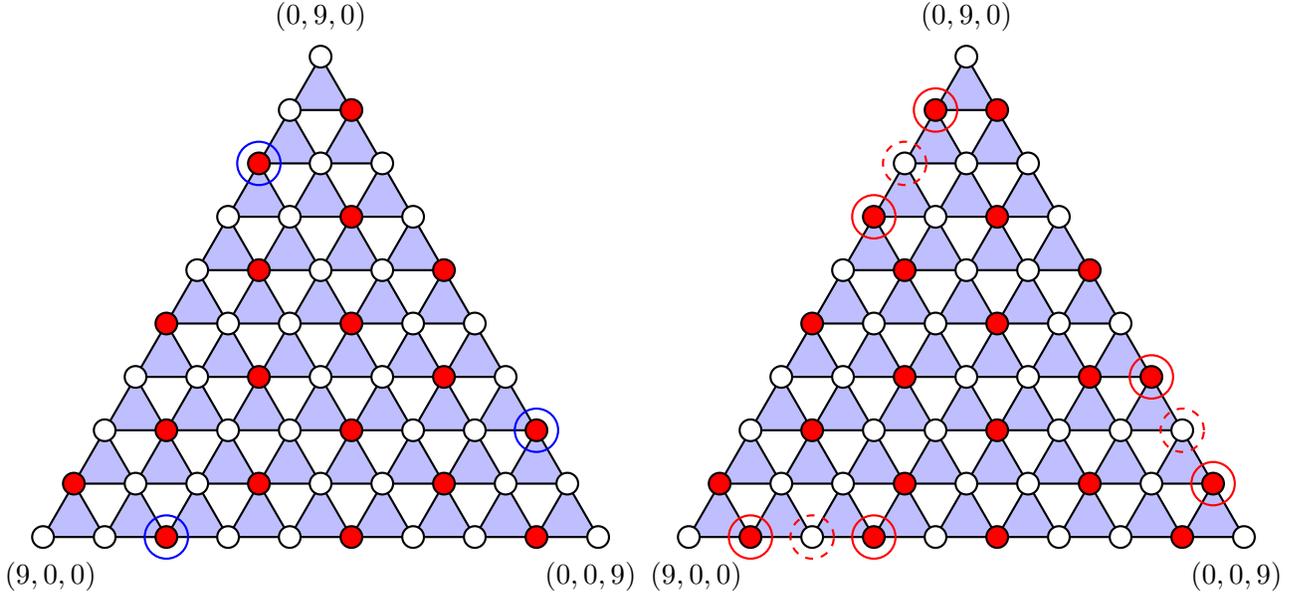

 First, the extension property holds by construction. The proof is analogous to that of \cref{claim:extension_restored} and is therefore omitted. It remains to find a valid invariant $v_C$ for each connected component $C$ of $G_\mathcal{T}$. We begin by proving two useful claims.

\begin{claim}\label{claim:T_disjoint_F}
$\mathcal{T} \cap \mathcal{F} = \emptyset$.
\end{claim}
\begin{proof}
By definition, the first part of the union, $\mathcal{B}_3^k \setminus \mathcal{P}$, is disjoint from $\mathcal{F}$ since $\mathcal{P} = \mathcal{B}_3^k \cap \mathcal{F}$. It is therefore sufficient to show that $(\bigcup_{\vec{x}\in\mathcal{P}} \mathcal{R}_{\vec{x}}) \cap \mathcal{F} = \emptyset$.
Assume for contradiction that there exists a type $\vec{r} \in \mathcal{R}_{\vec{x}}$ for some $\vec{x} \in \mathcal{P}$ such that $\vec{r} \in \mathcal{F}$. Since its parent type $\vec{x}$ is also in $\mathcal{F}$ and they are adjacent, this implies that the forbidden set $\mathcal{F}$ contains two adjacent types. This happens only when $\alpha=1$ or $\alpha=k-1$, or when $k$ is odd and $\alpha=(k-1)/2$ or $\alpha=(k+1)/2$. These conditions on $\alpha$ correspond to $\ell \equiv \pm 1 \pmod k$ or $\ell \equiv \pm 2 \pmod k$, which contradicts the hypothesis on $\ell$.
\end{proof}

\begin{claim}\label{claim:component_one_replacement}
Any two distinct types in $\bigcup_{\vec{x} \in \mathcal{P}} \mathcal{R}_{\vec{x}}$ belong to different connected components of $\mathcal{T}$.
\end{claim}

\begin{proof}

We first prove that for any distinct $\vec{x}_1, \vec{x}_2 \in \mathcal{P}$, we have  $\|\vec{x}_1 - \vec{x}_2\|_1 \ge 10$. 

We prove by contradiction. Assume that  there exists $\vec{x}_1, \vec{x}_2 \in \mathcal{P}$, where $\vec{x}_1 \neq \vec{x}_2$ and $\|\vec{x}_1 - \vec{x}_2\|_1 < 10$. 
Let the difference vector be $\vec{v} = \vec{x}_1 - \vec{x}_2 = (v_1, v_2, v_3)$. Since $\vec{x}_1, \vec{x}_2 \in \mathcal{P} \subseteq \mathcal{B}_3^k$, we have $v_1 + v_2 + v_3 = 0$ and $v_1 + 2v_2 \equiv 0 \pmod 3$. Combining these constraints with the norm condition $\|\vec{v}\|_1 < 10$ reveals that $\vec{v}$ must be a permutation of one of the following vectors: $\pm (2, -1, -1)$, $\pm (4, -2, -2)$, or $\pm (3, 0, -3)$.

Furthermore, recall that $\vec{x}_1, \vec{x}_2 \in \mathcal{F}=\text{Perm}(k,0,0) \cup \text{Perm}(\alpha, k-\alpha, 0)$.  If $\vec{v}= \vec{x}_1 - \vec{x}_2 $ is a permutation of $\pm (2, -1, -1)$ or $\pm (4, -2, -2)$, this implies that either $k=2$ with $\alpha=1$ or $k=4$ with $\alpha=2$. However, such scenarios are impossible because $k \ge 4$ and $\gcd(k, \alpha)=1$. Consequently, $\vec{v}$ must be a permutation of $(3, 0, -3)$. This implies that $\alpha=3$, $k-\alpha=3$, or $|(k-\alpha)-\alpha|=3$, which correspond to $3\ell \equiv \pm 1 \pmod k$ or $3\ell \equiv \pm 2 \pmod k$, a  contradiction to the hypothesis on $\ell$.

Thus for any distinct $\vec{x}_1, \vec{x}_2 \in \mathcal{P}$, we have  $\|\vec{x}_1 - \vec{x}_2\|_1 \ge 10$. As the problematic types are well-separated, the result follows from Claim~\ref{claim:component_separation}.\end{proof}

With the preceding two claims, we now construct a valid invariant $v_C$ for each connected component $C$. We consider two cases.

First, if the connected component $C$ contains no type from $\bigcup_{\vec{x}\in\mathcal{P}} \mathcal{R}_{\vec{x}}$. Then $C \subseteq \mathcal{B}_3^k \setminus \mathcal{P}$. By Claim~\ref{claim:base_family_props}, $C$ must be a singleton $\{\vec{w}\}$. Assume for contradiction that all coordinates of $\vec{w}$ lie in $\{0, k, \alpha, k-\alpha\}$. Since $\vec{w} \in \mathcal{B}_3^k \setminus \mathcal{P}$, it is by definition not in the forbidden family $\mathcal{F}$, so it cannot be a permutation of $(k,0,0)$ or $(\alpha, k-\alpha, 0)$. The only remaining possibility is that its non-zero coordinates are identical (e.g., $(\alpha, \alpha, \alpha)$ when $3\alpha=k$, or $(k-\alpha, k-\alpha, k-\alpha)$ when $3(k-\alpha)=k$), but these are ruled out by the condition $\gcd(k,\alpha)=1$ with $k \ge 4$. Thus, at least one coordinate $w_j$ must exist outside this set. We set $I_C=\{j\}$ and $v_C=w_j$, which provides a valid invariant.

Next, if the connected component $C$ contains some type from $\bigcup_{\vec{x}\in\mathcal{P}} \mathcal{R}_{\vec{x}}$. By \cref{claim:component_one_replacement}, it contains exactly one type from $\bigcup_{\vec{x}\in\mathcal{P}} \mathcal{R}_{\vec{x}}$. By symmetry, it suffices to analyze the connected components generated by types in $\mathcal{R}_{\vec{x}}$ for $\vec{x} \in \{(k,0,0), (\alpha, k-\alpha, 0)\}$.
\begin{itemize}
    \item If $\vec{x}=(k,0,0)$, then $\mathcal{R}_{\vec{x}} = \{(k-1,1,0)\}$. The connected component is $C = \{(k-1,1,0), (k-2,1,1)\}$. We set $I_C=\{2\}$ and $v_C=1$.
    \item If $\vec{x}=(\alpha, k-\alpha, 0)$, then $\mathcal{R}_{\vec{x}} = \{(\alpha-1, k-\alpha+1, 0), (\alpha+1, k-\alpha-1, 0)\}$. These two types form two distinct connected components.
    
    For the connected component $C_1 = \{(\alpha-1, k-\alpha+1, 0), (\alpha-2, k-\alpha+1, 1)\}$, we set $I_{C_1}=\{2\}$ and $v_{C_1}=k-\alpha+1$. As $\ell \not \equiv 2 \pmod k$, we have $2\alpha \not \equiv 1 \pmod k$, therefore $v_{C_1}=k-\alpha+1\not \equiv \alpha \pmod k$.

    For the connected component $C_2 = \{(\alpha+1, k-\alpha-1, 0), (\alpha+1, k-\alpha-2, 1)\}$, we set $I_{C_2}=\{1\}$ and $v_{C_2}=\alpha+1$. As $\ell \not \equiv -2 \pmod k$, we have $2\alpha \not \equiv  -1 \pmod k$, therefore $v_{C_2}=\alpha+1 \not \equiv k-\alpha \pmod k$.
\end{itemize}
It is straightforward to verify that $\sum_{i \in I_C} w_i = v_C$ for any $(w_1,w_2,w_3) \in C$. Since  $\ell \not \equiv \pm 1 \pmod k$, we know that $\alpha \not \equiv \pm 1 \pmod k$. Therefore, in all cases $v_C\notin\{0,\alpha,k-\alpha,k\}$. This implies that in all cases $\frac{k}{\gcd(k,\ell)}=k \nmid v_C$ and $\ell v_C\not \equiv \pm 1 \pmod k$.  This completes the proof.
\end{proof}

\end{document}